\newcommand{\CP}[1]{\mathbb{C}P^{#1}}      
\begin{document}
\raggedbottom
\pagestyle{myheadings}
\title
{\textsc{Rank three representations of Painlev\'e systems: III. Dolbeault structure, spectral correspondence}}

\author{by \textsc{Mikl\'os Eper}\footnote{\textsl{Department of Algebra and Geometry, Institute of Mathematics, Faculty of Natural Sciences, Budapest University of Technology and
Economics, M\H uegyetem rkp. 3., Budapest H-1111, Hungary}, e-mail: \href{mailto:epermiklos@gmail.com}{epermiklos@gmail.com}} and \textsc{Szil\'ard Szab\'o}\footnote{Institute of Mathematics, Faculty of Science, E\"otv\"os Lor\'and University, P\'azm\'any P\'eter s\'et\'any 1/C, Budapest, Hungary, H-1117; Alfr\'ed R\'enyi Institute of Mathematics,
Re\'altanoda utca 13-15., Budapest 1053, Hungary, e-mails:\href{mailto:szilard.szabo@ttk.elte.hu}{szilard.szabo@ttk.elte.hu} and \href{mailto:szabo.szilard@renyi.hu}{szabo.szilard@renyi.hu}}
\orcidlink{https://orcid.org/my-orcid?orcid=0009-0008-7435-2021}}

\maketitle


\begin{abstract}
We prove that there exists a holomorphic symplectic isomorphism between the rank $2$ and $3$ representations of the Painlev\'e systems in the Dolbeault complex structure, and give explicit descriptions of the corresponding elliptic fibrations. 
This, combined with the de Rham description given in~\cite{ESz3}, implies that the corresponding moduli spaces are hyperK\"ahler isometric to each other. 
\end{abstract}

\newtheorem{theorem}{Theorem}[section]
\newtheorem{corollary}[theorem]{Corollary}
\newtheorem{conjecture}{Conjecture}[section]
\newtheorem{lemma}[theorem]{Lemma}
\newtheorem{exmple}[theorem]{Example}
\newtheorem{defn}[theorem]{Definition}
\newtheorem{prop}[theorem]{Proposition}
\newtheorem{rmrk}[theorem]{Remark}
\newtheorem{claim}[theorem]{Claim}

\newenvironment{definition}{\begin{defn}\normalfont}{\end{defn}}
\newenvironment{remark}{\begin{rmrk}\normalfont}{\end{rmrk}}
\newenvironment{example}{\begin{example}\normalfont}{\end{example}}
\newenvironment{acknowledgement}{{\bf Acknowledgement:}}

\newcommand\restr[2]{{
  \left.\kern-\nulldelimiterspace 
  #1 
  \vphantom{\big|} 
  \right|_{#2} 
  }}


\section{Introduction}

In the first one of this series of articles~\cite{ESz2}, we have studied the wild character varieties associated to the rank $3$ Lax representations of the Painlev\'e equations found by Joshi, Kitaev and Treharne (JKT)~\cite{JKTI},~\cite{JKTII}. 
Then, in the second article~\cite{ESz2}, we have studied the properties of Fourier--Laplace transformation $\mathcal{F}$ for the corresponding $\mathcal{D}$-modules. 
The aim of this article is to offer a comprehensive description of the same correspondence, this time between the Dolbeault moduli spaces, in terms of spectral data and algebraic Nahm transformation. 
Our main result is: 
\begin{theorem}\label{thm:Dolisometry}
      Algebraic Nahm transformation establishes an algebraic symplectic isomorphism with respect to the Dolbeault complex structures between each $\mathcal{M}_{\textrm{Dol}}^{JKT*}$ and the corresponding $\mathcal{M}_{\textrm{Dol}}^{P*}$. 
\end{theorem}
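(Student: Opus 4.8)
The plan is to pass to spectral data on both sides and to realize the algebraic Nahm transformation as an explicit Fourier--Mukai-type operation that can be checked fibrewise to be an algebraic symplectomorphism. First I would apply the wild/parabolic spectral correspondence of Beauville--Narasimhan--Ramanan type to present each Dolbeault space as a relative compactified Jacobian: a point of $\mathcal{M}_{\textrm{Dol}}^{P*}$ is a rank-$2$ Higgs bundle $(E,\phi)$ on $\mathbb{P}^1$ with the prescribed poles, equivalently a torsion-free rank-one sheaf on its spectral curve $S$ cut out by $\det(\eta-\phi)=0$ inside the total space of $\mathcal{K}(D)$; a point of $\mathcal{M}_{\textrm{Dol}}^{JKT*}$ corresponds similarly to a sheaf on a rank-$3$ spectral curve $\tilde S$. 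Since both moduli spaces are two-dimensional, each Hitchin-type map is an elliptic fibration, so $S$ and $\tilde S$ both have arithmetic genus one generically. The first concrete task is to write out both families of spectral curves from the JKT and Painlev\'e Lax matrices and to identify the two Hitchin bases.

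Next I would establish the correspondence at the level of spectral curves. The characteristic variety of the $\mathcal{D}$-module underlying a de Rham point is its spectral curve, and the Fourier--Laplace transform $\mathcal{F}$ of~\cite{ESz2} acts on characteristic varieties by the classical symplectic swap $(z,\eta)\mapsto(\eta,-z)$ of base and fibre coordinates, accompanied by the rank change $3\leftrightarrow 2$ dictated by the transformation of the irregular types at the punctures. I would verify that this operation carries the family $\{\tilde S\}$ isomorphically onto $\{S\}$, matching the two elliptic fibrations base-to-base and fibre-to-fibre; the accompanying genus and degree bookkeeping must reproduce the equality of the two fibre dimensions, which is the essential consistency check of the rank $3\leftrightarrow 2$ passage.

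The third step transports the sheaf. Over a corresponding pair $(\tilde S,S)$ of spectral curves the algebraic Nahm transform is realized as a Fourier--Mukai transform along a Poincar\'e-type kernel between the two compactified Jacobians; I would show it preserves stability, so that it descends to a morphism of moduli spaces, and that it is invertible, the inverse being the transform in the opposite direction composed with the inversion $[-1]$. For the symplectic claim I would invoke the standard Liouville-type description of the holomorphic symplectic form on a relative compactified Jacobian as the form induced from the canonical $1$-form on the total space of $\mathcal{K}(D)$ via the support and Abel--Jacobi maps. Because the curve-level identification is the restriction of the linear symplectomorphism $(z,\eta)\mapsto(\eta,-z)$ and the Fourier--Mukai transform preserves the induced pairing on cohomology, the two canonical forms correspond and hence so do the two symplectic forms, yielding the symplectic isomorphism.

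The hardest part is expected to be the analysis at the punctures. Because the JKT representations are irregular, the spectral curves carry prescribed local models---ramification profiles and polar parts---at the marked points, and one must check that the classical limit of $\mathcal{F}$ intertwines these local models exactly, matching the Stokes and residue data as well as any parabolic weights. Only then are stability and the fibrewise dimension count preserved, and only then does the Nahm transform genuinely land in $\mathcal{M}_{\textrm{Dol}}^{P*}$ rather than in a moduli space carrying different local decorations; the global statement then follows by gluing the fibrewise isomorphisms over the matched Hitchin bases.
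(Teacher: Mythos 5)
Your outline follows the same skeleton as the paper's argument: spectral (BNR) correspondence on both sides, identification of the rank-$3$ and rank-$2$ spectral data through the coordinate swap on the ambient surface, and then a comparison of symplectic forms. One structural difference is worth noting. In the paper both moduli spaces are realized as open subsets of a \emph{single} Mukai moduli space $\mathcal{M}_{\textrm{Muk}}^{*}$ of simple sheaves on one surface $Z^*$ (a nine-fold blow-up of $\mathbb{F}_1$ with $\sigma_\infty$ blown down, which is a log-Calabi--Yau pair), and the algebraic Nahm transform is literally the \emph{same} spectral sheaf pushed forward along the other ruling of the birational model $Z^* \dashrightarrow X \times \widehat{X}$; there is no Fourier--Mukai kernel between two compactified Jacobians to construct, no stability to re-verify (rank-one torsion-free sheaves on integral spectral curves are automatically stable), and no fibrewise gluing step. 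Your ``Poincar\'e-type kernel'' collapses to the graph of the curve identification $(z,\eta)\mapsto(\eta,-z)$ once one observes that the two families of spectral curves are the same curves in $Z^*$; carrying the kernel formalism along is harmless but does extra work. Your last paragraph correctly isolates the matching of local models at the punctures as a serious point: in the paper this is exactly the content of the proof of Theorem~\ref{thm:BNR} (the local forms \eqref{polar1}--\eqref{polar6} hold if and only if the spectral curve contains the base locus of the pencil scheme-theoretically), and the fact that $\mathcal{N}$ lands in $\mathcal{M}_{\textrm{Dol}}^{P*}$ is established via Proposition~\ref{prop:FLN} together with the de Rham results of~\cite{ESz3}.

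The genuine gap is in your third step. You ``invoke the standard Liouville-type description of the holomorphic symplectic form on a relative compactified Jacobian'' and conclude that the forms correspond because the curve identification is a linear symplectomorphism of $\operatorname{Tot}(K_{\mathbb{C}P^1}(D))$. For wild, parabolic Higgs moduli with fixed polar parts this identification of the Dolbeault symplectic form (defined via the hypercohomology of the deformation complex, \`a la Biswas--Ramanan) with the form induced by spectral data is precisely what must be proved, and the paper treats it as its main technical point: Proposition~\ref{prop:symplectic} compares the Dolbeault form with the Mukai form given by the Yoneda pairing $\operatorname{Ext}^1_{\mathcal{O}_{Z^*}}(\mathcal{S},\mathcal{S}) \times \operatorname{Ext}^1_{\mathcal{O}_{Z^*}}(\mathcal{S},\mathcal{S}) \to \operatorname{Ext}^2_{\mathcal{O}_{Z^*}}(\mathcal{S},\mathcal{S}) \cong \mathbb{C}$ through an explicit \v{C}ech-cocycle computation, whose key input (Lemma~\ref{lem:1_cocycle}) is that the relevant $1$-cocycle takes values in $K_{\Sigma}(-\Delta)$ because the spectral curves are disjoint from the total transform of $p^{-1}(D)$ -- a fact that depends on the resolution of the pencil's base locus, i.e.\ on the irregular local forms, and is not an off-the-shelf statement. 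Without an argument at this level of precision (including the restriction to the locus of smooth spectral curves and the extension to the whole space by algebraicity and density), the sentence ``the two canonical forms correspond and hence so do the two symplectic forms'' asserts the theorem rather than proving it.
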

On the other hand, in~\cite{ESz3} we have proved that Fourier--Laplace transformation of $\mathcal{D}$-modules determines an algebraic isomorphism between corresponding de Rham moduli spaces of irregular connections with given normal forms. 
In Proposition~\ref{prop:FLN} we establish that Nahm and Fourier--Laplace transformations agree, up to applying the non-abelian Hodge diffeomorphism. 
Combining these results, we then immediately get: 
\begin{theorem}\label{thm:FLhK}
Fourier--Laplace transformation $\mathcal{F}$ of $\mathcal{D}$-modules induces a hyperK\"ahler isometry between the moduli spaces of rank $2$ irregular connections associated to the Painlev\'e equations and the corresponding JKT moduli spaces of irregular connections of rank $3$.  
\end{theorem}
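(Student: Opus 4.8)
The plan is to derive Theorem~\ref{thm:FLhK} as a formal consequence of two already-established facts: the algebraic symplectic isomorphism in the Dolbeault structure (Theorem~\ref{thm:Dolisometry}) and the algebraic isomorphism of de Rham moduli spaces coming from Fourier--Laplace transformation (the result of~\cite{ESz3}). The bridge between these two worlds is the non-abelian Hodge correspondence, which provides a real-analytic diffeomorphism $\mathcal{M}_{\textrm{Dol}} \cong \mathcal{M}_{\textrm{dR}}$ for each of the moduli spaces in question, identifying their respective hyperK\"ahler metrics. The crux is Proposition~\ref{prop:FLN}, which asserts that the algebraic Nahm transformation (acting on $\mathcal{M}_{\textrm{Dol}}$) and the Fourier--Laplace transformation $\mathcal{F}$ (acting on $\mathcal{M}_{\textrm{dR}}$) are intertwined by the non-abelian Hodge diffeomorphism; granting this, the whole argument becomes a diagram chase.

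First I would assemble the commutative square: the two vertical arrows are the non-abelian Hodge diffeomorphisms for the JKT side and for the Painlev\'e side, and the two horizontal arrows are Nahm transformation on the Dolbeault spaces and $\mathcal{F}$ on the de Rham spaces. Proposition~\ref{prop:FLN} is exactly the commutativity of this square. Then I would read off the consequences for each of the three hyperK\"ahler complex structures. In complex structure $I$ (the Dolbeault structure), Theorem~\ref{thm:Dolisometry} tells us Nahm transformation is an algebraic symplectic isomorphism, hence holomorphic and compatible with the holomorphic symplectic form $\omega_I = \omega_J + i\omega_K$ up to the relevant normalization. In complex structure $J$ (the de Rham structure), the result imported from~\cite{ESz3} tells us $\mathcal{F}$ is an algebraic isomorphism, hence holomorphic for $J$ and compatible with $\omega_J$. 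Because the two transformations agree under the diffeomorphism, the single underlying map is simultaneously holomorphic for $I$ and for $J$.

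Next I would argue that a diffeomorphism between hyperK\"ahler manifolds that is holomorphic with respect to two of the three complex structures $I$ and $J$ — and symplectic for the associated holomorphic symplectic forms — must be a hyperK\"ahler isometry. The standard mechanism here is that $I$ and $J$ generate the full quaternionic action, so preserving both forces the map to intertwine the whole $S^2$ of complex structures, and preserving the holomorphic symplectic forms pins down the K\"ahler forms $\omega_I, \omega_J, \omega_K$; since the metric is recovered from any complex structure together with its K\"ahler form by $g(\cdot,\cdot) = \omega_I(\cdot, I\cdot)$, the map is an isometry. I would state this as a short lemma or simply invoke it, the point being that holomorphicity in two independent complex structures of the twistor sphere plus compatibility with the holomorphic symplectic data upgrades an algebraic symplectomorphism to a metric isometry.

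The main obstacle I expect is Proposition~\ref{prop:FLN}, i.e.\ genuinely identifying algebraic Nahm transformation with Fourier--Laplace transformation up to non-abelian Hodge: this requires matching the spectral-correspondence description of Nahm on the Dolbeault side with the $\mathcal{D}$-module operation $\mathcal{F}$ on the de Rham side, and checking that the two respect the harmonic-bundle limits defining the diffeomorphism. Once that proposition is in hand, the deduction of Theorem~\ref{thm:FLhK} is purely formal; the remaining care is bookkeeping — confirming that the normalizations of the symplectic forms under $\mathcal{F}$ (which may scale by a universal constant) do not obstruct isometry, and that the parabolic/irregular data and the fixed normal forms are preserved so that the diffeomorphism is defined on the matching components of the moduli spaces on both sides.
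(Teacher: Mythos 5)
Your proposal is correct and follows essentially the same route as the paper: the paper deduces Theorem~\ref{thm:FLhK} exactly by combining Theorem~\ref{thm:Dolisometry} (Nahm is a holomorphic symplectomorphism in structure $I$), the de Rham isomorphism from~\cite{ESz3} (holomorphy in structure $J$), and the commutative square of Proposition~\ref{prop:FLN}, then invoking the standard fact---borrowed from the argument of~\cite{Sz_Plancherel}---that a map holomorphic in two complex structures of the hyperK\"ahler family and compatible with the holomorphic symplectic data is a hyperK\"ahler isometry. Your write-up merely makes explicit the final ``two complex structures $\Rightarrow$ isometry'' lemma that the paper leaves implicit (and note that preserving $\omega_J + i\omega_K$ together with $J$-holomorphy already recovers the metric, so symplecticity in the de Rham structure is not even needed).
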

The second author proved in~\cite{Sz_Plancherel} that if the singularities away from $\infty$ are regular and the irregular singularity at $\infty$ is of the simplest possible type (unramified of Katz invariant $1$, c.f. the notion of a non-commutative Hodge structure of exponential type~\cite[Definition~2.12]{KKP}), then Fourier--Laplace (also known as Nahm) transformation acts as a hyperK\"ahler isometry between the complete moduli spaces. 
This result applies directly to one of the cases that we consider here (Painlev\'e VI), and the idea of the proof carries over too, up to some technical details. 
Namely, we need to work out the deformation theory of the associated Dolbeault moduli spaces. 
We also identify the Dolbeault moduli spaces as the integrable systems in terms of specific elliptic pencils, and we determine their fiber at infinity (Section~\ref{sec:Dol}). 
We find that they agree with the fibers at infinity found for the Painlev\'e cases~\cite{Sz_Painl}. 
The main technical points of the proof are Theorem~\ref{thm:BNR}, where we show that the Dolbeault moduli space is birational to a certain Mukai moduli space of sheaves on the total space of a suitable elliptic fibration (which can be considered as a special case of~\cite{BNR}), and Proposition~\ref{prop:symplectic} where we show that this map preserves the natural symplectic structures. 

We expect that Fourier--Laplace--Nahm transformation is a hyperK\"ahler isometry between suitable non-abelian Hodge moduli spaces in higher generality, too. 
Up to date, as far as we are aware, the only proofs of nontrivial hyperK\"ahler isometry between moduli spaces of solutions of the self-duality equations use one version or another of Nahm's transformation: for the ADHM-transformation see~\cite{Mac}, for dual $4$-tori~\cite{BvB}, for monopoles~\cite{Nak}, for doubly-periodic instantons~\cite{BJ}, for instantons on multi-Taub-NUT space~\cite{CLHS}. 
We note that in all the listed examples, isometry of Nahm's transformation follows from tedious computations using harmonic spinors, while our proof relies on a more geometric approach. 
We believe that our current, geometric proof should carry over to more general situations. 

\section{Preliminaries}

\subsection{Higgs bundles with higher order poles}\label{higgs}

Throughout the paper, we will work over the base field $\mathbb{C}$. 
Let $X$ be a smooth projective curve and $D=\sum m_ip_i$ an effective divisor on $X$, not necessarily reduced. 
A meromorphic Higgs bundle of rank $r$ over $(X,D)$ is a couple $(\mathcal{E},\theta )$ where $\mathcal{E}$ is a holomorphic vector bundle of rank $r$ over $X$ and $\theta \in H^0 (X, End(\mathcal{E}) \otimes K_X(D))$ where $K_X$ stands for the canonical bundle of $X$. 
In this paper we will be mostly dealing with the cases $r=2,3$. 
Two meromorphic Higgs bundles $(\mathcal{E}_i ,\theta_ i )$ with $i=1,2$ are said to be isomorphic to each other if there exists a bundle isomorphism 
\[
    \Phi\colon \mathcal{E}_1 \to \mathcal{E}_2
\] 
such that 
\[
    \theta_2 \circ \Phi = (\Phi \otimes \operatorname{I}_{K_X(D)}) \circ \theta_1 ,
\]
where $\operatorname{I}_{K_X(D)}$ is the identity automorphism of $K_X(D)$. 
By~\cite{Mark}, there exists a moduli space of isomorphism classes of stable meromorphic Higgs bundles of given rank $r$ and degree $d$ over $X$, and it is a holomorphic Poisson manifold. 
Moreover, the symplectic leaves of this Poisson manifold are given by fixing the irregular part and residue of $\theta$ at $D$. 
The cases when these symplectic leaves are of the lowest possible complex dimension $2$, have been listed in~\cite[Section~4.1]{Boa3}. 
The case of interest to us is the one related to the root system $\widetilde{E}_6$, that has numerical data $r=3$ and $\operatorname{length}(D) = 3$. 
Three different partitions realize the possible divisors $D$: $(1,1,1), (2,1)$ and $3$. 
The first one will be referred to as the logarithmic case, and the other two as the irregular ones. 
In both irregular cases, the Jordan type of the leading-order coefficient of $\theta$ may belong to three combinatorially different adjoint orbits
\begin{enumerate}
    \item semisimple orbits,
    \item orbits with a Jordan block of size $2$ and a Jordan block of size $1$,
    \item and finally orbits with a Jordan block of size $3$.  
\end{enumerate} 
We will call these cases respectively the unramified, minimally ramified and totally (or maximally) ramified (or twisted) cases. 
We will always assume that the eigenvalues corresponding to different Jordan blocks of the leading-order term are different, said differently that it is regular.
This gives us a total of $7$ combinatorial cases to study. 

\subsubsection{Holomorphic symplectic leaves}

From now on, we will set $X = \CP1$, covered by two affine sets $X =\operatorname{Spec}\mathbb{C}[z] \cup \operatorname{Spec}\mathbb{C}[w]$ with $w = z^{-1}$. 
We will denote by $\infty$ the closed point $(w) \in \operatorname{Spec}\mathbb{C}[w]$, that we take to be in $D$. 
The aim of this section is to define ${\mathcal{M}}_{Dol}^{JKT*}$, the moduli space of rank 3 meromorphic, parabolic, $\alpha$-stable Higgs bundles over $\mathbb{C}P^1$. 
Here, the symbol $*$ refers to the one of the following combinatorial possibilities, encoding the number of singular points and the formal types of the corresponding differential module at the irregular singularity. 
Namely, $*$ is one of the following possibilities: 
\begin{equation*}
    \{VI,V,IVa,IVb,II,I\}, 
\end{equation*}
That is, $JKT*$ refers to the systems described in \cite{JKTI}, \cite{JKTII}. In cases $JKTVI$, $JKTV$, $JKTIVa$ there are two singularities, one logarithmic and one where $\theta$ admits a pole of order two, and this latter is untwisted, minimally twisted and maximally twisted respectively. 
In the cases  $JKTIVb$, $JKTII$, $JKTI$, $\theta$ has one singularity of order three, which is untwisted, minimally twisted and maximally twisted respectively. 
In each of these cases, the multiplicity of $\{ \infty \}$ in $D$ is at least $2$, i.e. the corresponding point is an irregular singularity. 
There is also a seventh case, which we will refer to as the logarithmic case, with three logarithmic singularity. 
This latter case was the object of our previous paper~\cite{ESz}. 
We will spell out the local models in more detail in equations below. 

Consider the holomorphic coordinate chart $z-p_i$ centered at $p_i$, and take a local trivialization of $\mathcal{E}$. 
Then, the Higgs field has the Laurent-series expansion:
\begin{equation}
    \label{laurent}
    \theta =\sum_{k=-m_i}^{\infty} A_k(z-p_i)^{k}\otimes\textrm{d}z, 
\end{equation}
where $A_k\in\mathfrak{gl}(3,\mathbb{C})$, and $\operatorname{Res}_{p_i}\theta:=A_{-1}$ is the residue, which is well-defined up to the GL$(3,\mathbb{C})$-adjoint action. 
The integer $m_i$ is the order of the pole.
The singularity is said to be logarithmic in case $m_i=1$.
For generic choices of the parameters (see below) the pole is irregular in case $m_i>1$. 

In our setup, we fix the following data:
\begin{itemize}
    \item The divisor $D$ on $\mathbb{C}P^1$ (the points $p_i$'s and numbers $m_i$'s).
    \item The irregular part and residue of $\theta$ at $D$.
    \item parabolic weights $\{\alpha_{p_i}^j\}$ at $D$ (the same way, as in the logarithmic case in \cite{ESz}).
\end{itemize}
For the importance of parabolic weights, see the proof of Theorem~\ref{thm:moduli}. 
We make the assumption 
\begin{equation}\label{eq:par_wt}
    0 < \alpha_{p_i}^j < 1.
\end{equation}
These data will then define the Dolbeault moduli space ${\mathcal{M}}_{Dol}^{JKT*}$, and the exact choice of the data decides which of the 6 cases the symbol $*$ denotes. 

Specifically, the local forms at $\infty$ of the meromorphic Higgs fields are fixed as follows, with respect to some holomorphic trivialization of $\mathcal{E}$. In the cases $JKTVI, JKTV, JKTIVa$ corresponding to $D=\{0\}+2\cdot \{\infty\}$, the eigenvalues of $\operatorname{Res}_0 \theta$ are fixed generic values, denoted by $\tau_i$, $0 \leq i \leq 2$.

\paragraph{JKTVI}
We have $D = 2 \cdot \{ \infty \} + \{ 0 \}$, such that the irregular singularity $p_0=\infty$ is untwisted, with local form at $p_0=\infty$ given by 
\begin{equation}
\label{polar1}\tag{JKTVI}
    \theta=\left[
    \begin{pmatrix}
        a_0 & 0 & 0 \\
        0 & a_1 & 0 \\
        0 & 0 & a_2
    \end{pmatrix}w^{-2}+\begin{pmatrix}
        b_0 & 0 & 0 \\
        0 & b_1 & 0 \\
        0 & 0 & b_2
    \end{pmatrix}w^{-1}+O(1)
    \right]\otimes\textrm{d}w,
\end{equation}
where $a_i,b_i\in\mathbb{C}$ ($i=0,1,2$) are fixed, $a_i$'s are mutually different, and 
\[
b_0+b_1+b_2=\operatorname{Tr Res}_{0} \theta
\]
because of the residue theorem.

\paragraph{JKTV}
We have $D = 2 \cdot \{ \infty \} + \{ 0 \}$, such that the irregular singularity $p_0=\infty$ is minimally twisted, with local form at $p_0=\infty$ given by 
\begin{equation}
\label{polar2}\tag{JKTV}
    \theta=\left[
    \begin{pmatrix}
        a_0 & 1 & 0 \\
        0 & a_0 & 0 \\
        0 & 0 & a_1
    \end{pmatrix}w^{-2}+\begin{pmatrix}
        0 & 0 & 0 \\
        b_0 & b_1 & 0 \\
        0 & 0 & b_2
    \end{pmatrix}w^{-1}+O(1)
    \right]\otimes\textrm{d}w,
\end{equation}
where $a_0,a_1,b_i\in\mathbb{C}$ ($i=0,1,2$) are fixed, $b_0 \neq 0$, $a_0,a_1$ are different, and 
\[
b_1+b_2=\operatorname{Tr Res}_{0} \theta.
\] 

\paragraph{JKTIVa}
We have $D = 2 \cdot \{ \infty \} + \{ 0 \}$, such that the irregular singularity $p_0=\infty$ is maximally twisted, with local form at $p_0=\infty$ given by 
\begin{equation}
\label{polar3}\tag{JKTIVa}
    \theta=\left[
    \begin{pmatrix}
        a_0 & 1 & 0 \\
        0 & a_0 & 1 \\
        0 & 0 & a_0
    \end{pmatrix}w^{-2}+\begin{pmatrix}
        0 & 0 & 0 \\
        0 & 0 & 0 \\
        b_0 & b_1 & b_2
    \end{pmatrix}w^{-1}+O(1)
    \right]\otimes\textrm{d}w,
\end{equation}
where $a_0,b_i\in\mathbb{C}$ ($i=0,1,2$) are fixed, and $b_2=\operatorname{Tr Res}_{0} \theta$.

\paragraph{JKTIVb}
We have $D = 3 \cdot \{ \infty \}$, such that the irregular singularity $p_0=\infty$ is untwisted, with local form at $p_0=\infty$ given by 
\begin{equation}
\label{polar4}\tag{JKTIVb}
    \theta=\left[
    \begin{pmatrix}
        a_0 & 0 & 0 \\
        0 & a_1 & 0 \\
        0 & 0 & a_2
    \end{pmatrix}w^{-3}+\begin{pmatrix}
        b_0 & 0 & 0 \\
        0 & b_1 & 0 \\
        0 & 0 & b_2
    \end{pmatrix}w^{-2}+\begin{pmatrix}
        c_0 & 0 & 0 \\
        0 & c_1 & 0 \\
        0 & 0 & c_2
    \end{pmatrix}w^{-1}+O(1)
    \right]\otimes\textrm{d}w,
\end{equation}
where $a_i,b_i,c_i\in\mathbb{C}$ ($i=0,1,2$) are fixed, $a_i$'s are mutually different, and $c_0+c_1+c_2=0$, again because of the residue theorem.

\paragraph{JKTII}
We have $D = 3 \cdot \{ \infty \}$, such that the irregular singularity $p_0=\infty$ is minimally twisted, with local form at $p_0=\infty$ given by  
\begin{equation}
\label{polar5}\tag{JKTII}
    \theta=\left[
    \begin{pmatrix}
        a_0 & 1 & 0 \\
        0 & a_0 & 0 \\
        0 & 0 & a_1
    \end{pmatrix}w^{-3}+\begin{pmatrix}
        0 & 0 & 0 \\
        b_0 & b_1 & 0 \\
        0 & 0 & b_2
    \end{pmatrix}w^{-2}+\begin{pmatrix}
        0 & 0 & 0 \\
        c_0 & c_1 & 0 \\
        0 & 0 & c_2
    \end{pmatrix}w^{-1}+O(1)
    \right]\otimes\textrm{d}w,
\end{equation}
where $a_0,a_1,b_i\in\mathbb{C}$ ($i=0,1,2$) are fixed, $a_0,a_1$ are different, and $c_1+c_2=0$.

\paragraph{JKTI}
We have $D = 3 \cdot \{ \infty \}$, such that the irregular singularity $p_0=\infty$ is maximally twisted, with local form at $p_0=\infty$ given by 
\begin{equation}
\label{polar6}\tag{JKTI}
    \theta=\left[
    \begin{pmatrix}
        a_0 & 1 & 0 \\
        0 & a_0 & 1 \\
        0 & 0 & a_0
    \end{pmatrix}w^{-3}+\begin{pmatrix}
        0 & 0 & 0 \\
        0 & 0 & 0 \\
        b_0 & b_1 & b_2
    \end{pmatrix}w^{-2}+\begin{pmatrix}
        0 & 0 & 0 \\
        0 & 0 & 0 \\
        c_0 & c_1 & 0
    \end{pmatrix}w^{-1}+O(1)
    \right]\otimes\textrm{d}w,
\end{equation}
where $a_0,b_i,c_i\in\mathbb{C}$ ($i=0,1,2$) are fixed. 

\begin{theorem}~\cite{BB}\label{thm:moduli}
    For fixed numerical invariants $a_i, b_i, c_i$ as appropriate, there exist moduli spaces ${\mathcal{M}}_{Dol}^{JKT*}$ of gauge equivalence classes of $\alpha$-stable meromorphic Higgs bundles having the above local forms. 
    They are real $4$-dimensional complete hyperK\"ahler manifolds. 
    Moreover, endowed with another complex structure $J$ of the hyperK\"ahler family, the spaces ${\mathcal{M}}_{Dol}^{JKT*}$ are $\mathbb{C}$-analytically isomorphic to moduli spaces ${\mathcal{M}}_{dR}^{JKT*}$ of $\beta$-stable integrable connections with irregular singularities of fixed irregular types and residues. 
\end{theorem}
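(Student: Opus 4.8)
The plan is to deduce this statement from the general wild non-abelian Hodge theory of Biquard--Boalch \cite{BB}, by verifying that each of our prescribed local forms \eqref{polar1}--\eqref{polar6} satisfies the hypotheses required there. The essential point is that our standing regularity assumption --- that the eigenvalues attached to distinct Jordan blocks of the leading coefficient are mutually distinct --- guarantees that the formal type of $\theta$ at $\infty$ is \emph{good} in the sense of \cite{BB} (equivalently, that after a finite ramified cover the irregular part becomes diagonal with distinct leading coefficients). In the untwisted cases JKTVI and JKTIVb the leading coefficient is already semisimple with distinct eigenvalues, so the formal decomposition is diagonal already over the disc; in the minimally and maximally twisted cases the ramification is handled by passing to the degree $2$, resp.\ degree $3$, cyclic cover of a punctured disc around $\infty$, on which the irregular part diagonalizes with distinct leading terms. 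Together with the residue eigenvalues fixed in each case and the parabolic weights $\alpha^j_{p_i}$ satisfying \eqref{eq:par_wt}, these constitute an admissible set of asymptotic boundary data.

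Granting this, I would proceed in three steps. First, for the algebraic structure of $\mathcal{M}_{Dol}^{JKT*}$: the moduli space of $\alpha$-stable filtered (parabolic) meromorphic Higgs bundles with the fixed local forms is constructed as a GIT quotient, following the parabolic Higgs bundle constructions and their extension to the irregular setting; for generic weights $\alpha$-stability coincides with $\alpha$-semistability, so the quotient is a smooth quasi-projective variety. Second, for the hyperK\"ahler metric and completeness: here I invoke the analytic core of \cite{BB}, namely that every $\alpha$-stable object admits a unique harmonic metric adapted to the prescribed singular behaviour, solving Hitchin's self-duality equations, and that the resulting moduli space of solutions is a complete hyperK\"ahler manifold --- completeness being a consequence of the a priori estimates for harmonic metrics with fixed polar parts. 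Third, for the dimension: the $\widetilde{E}_6$ numerical data $(r,\operatorname{length}(D))=(3,3)$ place us on the lowest-dimensional symplectic leaf listed in \cite[Section~4.1]{Boa3}, and a deformation-theoretic index computation yields complex dimension $2$, hence real dimension $4$.

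Finally, the identification with $\mathcal{M}_{dR}^{JKT*}$ in the complex structure $J$ is the wild non-abelian Hodge correspondence itself: hyperK\"ahler rotation carries the Higgs-bundle complex structure to the complex structure of flat irregular meromorphic connections, and under this rotation an $\alpha$-stable Higgs bundle with local form \eqref{polar1}--\eqref{polar6} is sent to a $\beta$-stable connection whose irregular type and residue are determined by, and in turn determine, the corresponding Higgs data. The matching of the two stability parameters $\alpha$ and $\beta$, and of the local invariants on the two sides, is read off by tracking the harmonic metric through the correspondence, and produces the desired $\mathbb{C}$-analytic isomorphism. I expect the main obstacle to lie not in the formal checking of hypotheses but in this analytic input: confirming that the growth conditions imposed by our \emph{explicitly} prescribed polar parts are exactly those for which the existence, uniqueness and completeness results of \cite{BB} apply, and in the ramified cases verifying that the harmonic metric descends correctly from the cyclic cover.
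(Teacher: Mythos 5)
Your proposal is fine for the untwisted cases $JKTVI$ and $JKTIVb$, where indeed everything except the dimension count is contained in \cite[Theorem~0.2]{BB}, and your account of the de Rham identification (hyperK\"ahler rotation plus Simpson-type matching of weights, residues and irregular types) matches the paper. The genuine gap is in the four twisted cases, and it sits exactly where you defer the difficulty: you propose to ``invoke the analytic core of \cite{BB}'' for existence, uniqueness and completeness, treating ramification as a matter of ``verifying that the harmonic metric descends correctly from the cyclic cover.'' But \cite{BB} does not treat twisted (ramified) irregular types at all: its construction of the moduli space --- weighted Sobolev spaces, the affine space of connections, the gauge group, and the proof that the quotient of the solution space is a smooth complete hyperK\"ahler manifold --- is set up only for unramified formal types. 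What the literature supplies in the twisted cases is the existence of an equivariant wild harmonic metric on a \emph{given} bundle (\cite[Section~13.4]{Moc}); the existence of the \emph{moduli space} is precisely what is missing, and it cannot be obtained by a local check of hypotheses or by descending individual metrics.

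The paper closes this gap with a construction your proposal lacks: fix a preferred $\alpha$-stable wild harmonic bundle $(\mathcal{E},\theta,h)$ with the required local forms, choose a \emph{global} ramified covering $\phi\colon\widetilde{X}\to X$ such that $\phi^*(\mathcal{E},\theta,h)$ is unramifiedly wild near every point of $D$, define the weighted Sobolev spaces, the affine space of connections $\widetilde{\mathcal{A}}$ and the gauge group $\widetilde{\mathcal{G}}$ of \cite{BB} upstairs on $\widetilde{X}$, and pass to the fixed-point sets $\mathcal{A}=\widetilde{\mathcal{A}}^{\Gamma}$, $\mathcal{G}=\widetilde{\mathcal{G}}^{\Gamma}$ under the deck group $\Gamma$. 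The $\Gamma$-equivariant version of \cite[Theorem~5.4]{BB} then yields that the moduli space of $\Gamma$-invariant solutions of the self-duality equations on $\widetilde{X}$ exists and is a smooth hyperK\"ahler manifold, and one returns to $X$ by conformal equivalence of the equations and Galois descent; the identification of unitary gauge classes of solutions with $\alpha$-stable Higgs bundles on one side and $\beta$-stable connections on the other is then the equivariant version of \cite[Sections~7,8]{BB}. So the missing idea is the descent of the entire gauge-theoretic moduli problem, not of individual harmonic metrics. Two smaller discrepancies: the paper's construction is purely gauge-theoretic rather than GIT (your first step), and the dimension is read off from the explicit elliptic-pencil description of Section~\ref{sec:Dol} and the cubic-surface character varieties of \cite{ESz2}, though your deformation-theoretic count would serve as well.
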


For the notion of irregular type of an integrable connections with irregular singularities, see~\cite{Boa4} or its summary in~\cite[Section~2.2.1]{ESz2}. 
There results a diffeomorphism
\begin{equation}\label{eq:NAH}
    NAH\colon {\mathcal{M}}_{Dol}^{JKT*} \longrightarrow {\mathcal{M}}_{dR}^{JKT*}. 
\end{equation}

\begin{rmrk}
    The strategy of the below proof works in higher generality too, once suitable local forms are fixed. 
    (Obviously, the dimension must be suitably modified for the generalization.)
\end{rmrk}

\begin{proof}
In the untwisted cases, all results except for the dimension count are contained in~\cite[Theorem~0.2]{BB}. 
In the twisted cases, one needs to apply an equivariant version of the theory of~\cite{BB}. 
An equivariant wild harmonic metric has been found in~\cite[Section~13.4]{Moc}.
However, the existence of the moduli spaces has not been addressed in \emph{loc. cit.}, so we now spell out some of the details. 
We will not give full details, rather we content ourselves with explaining the new ideas necessary for reducing the construction to the one of~\cite{BB}. 

Namely, a Hermitian metric in $\mathcal{E}$ is said to be Hermitian--Einstein for $(\mathcal{E}, \theta )$ if the associated connection 
\[
    D_h^+ + \theta + \theta^{\dagger_h} 
\]
is integrable on $X\setminus D$. 
Here, $D_h^+ = \bar{\partial}_{\mathcal{E}} + \partial_h$ stands for the Chern connection 
associated to $\bar{\partial}_{\mathcal{E}}$ and $h$, and $\dagger_h$ stands for 
Hermitian transpose with respect to $h$. 
If $h$ is Hermitian--Einstein for $(\mathcal{E}, \theta )$, then the triple $(\mathcal{E}, \theta , h)$ is said to be a harmonic bundle on $X\setminus D$. 
A rank $r$ harmonic bundle is called unramifiedly (good)\footnote{In the case of complex curves, the adjective "good" is superfluous, so we will drop it in this discussion.} wild~\cite[Section~1.2]{Moc} if near every singular point $p\in X$, with respect to some local holomorphic coordinate centered at $p$ (that is $t=z-p$, if $p\in\operatorname{Spec}\mathbb{C}[z]$ and $t=w$, if $p=\infty$) and some holomorphic trivialization of $\mathcal{E}$, we have 
\begin{enumerate}
    \item a decomposition 
\begin{equation}
\label{eq:decomposition}
    (\mathcal{E}, \theta ) = \bigoplus_i  
\left( \mathbb{C}( t ) , \frac 12 \operatorname{d}\! q_i + \nu_i \frac{\operatorname{d}\! t}t \right) \otimes \mathcal{R}_i
\end{equation}
with
\begin{itemize}
    \item $q_i \in t^{-1}\mathbb{C}[t^{-1}]$ some Laurent polynomials without constant term,
    \item some $\nu_i \in \mathbb{C}$ (eigenvalues of the residue), and 
    \item some Higgs bundle 
\[
    \mathcal{R}_i = \left( \mathbb{C}( t )^{\oplus r_i}, N_i \frac{\operatorname{d}\! t}t + O(1) \operatorname{d}\! t \right)
\]
where $N_i \in \mathfrak{gl}_{r_i}(\mathbb{C})$ is some nilpotent endomorphism (where, obviously $r = \sum_i r_i$ is some partition); 
\end{itemize}
\item as $t\to 0$, the Hermitian metric $h$ is bounded with respect to 
\begin{equation}\label{eq:model_metric}
    h_0 = \operatorname{diag} (|t|^{2 \alpha_i} |\ln |t|^2|^k), 
\end{equation}
where 
\begin{itemize}
\item 
\[
    0 \leq \alpha_1 \leq \alpha_2 \leq \cdots < 1
\]
are some numbers (parabolic weights)\footnote{The arguments of this theorem hold without the positivity assumption~\eqref{eq:par_wt}.},
\item $N_i$ respects the $\alpha$-filtration: if $\alpha_i < \alpha_{i'}$ and $e_i, e_{i'}$ are corresponding basis vectors in the diagonal decomposition of $h_0$ then the coefficient of $e_{i'}$ in the expression of $N e_i$ as linear combination of the basis vectors is $0$, 
\item $k$ is the weight of $N_i$, namely $k=2j-1-n$ on $\operatorname{ker}(N_i^j)- \operatorname{ker}(N_i^{j-1})$. 
\end{itemize}
\end{enumerate}

A wild harmonic bundle $(\mathcal{E}, \theta , h)$ is defined as a harmonic bundle such that on some disc $\Delta$ centered at $p$ there exists some ramified covering 
\[
    \phi_{N} \colon \Delta_{N} \to \Delta, \quad u\mapsto t = u^{N}
\]
such that $\phi_{N}^* (\mathcal{E}, \theta , h)$ is an unramified wild harmonic bundle. 

The parabolic structure on $\phi_{N}^* \mathcal{E}$ at $p\in D$ stands for the decreasing filtration by locally free subsheaves 
\[
    (\phi_{N}^* \mathcal{E})_{\alpha} = \left\{ e\in (\phi_{N}^* \mathcal{E}) (U)\,\vert \, h(e,e)^{1/2} = O \left( |u|^{- N \alpha - \epsilon } \right) \, \mbox{for all} \, \epsilon>0 \right\}. 
\]
Equivalently, it is a filtration by vector subspaces:
\begin{equation*}
    \restr{\phi_{N}^* \mathcal{E}}{p}=l_{p}^0\supset l_{p}^1\supset  l_{p}^2\supset \cdots \supset l_{p}^r = \{0\},
\end{equation*}
the jumps coming with parabolic weights:
\begin{equation*}
    0 \leq\alpha_{p}^1<\alpha_{p}^2<\cdots < \alpha_{p}^r< 1 .
\end{equation*}
The parabolic degree of $\mathcal{E}$ is defined to be
\begin{equation*}
\textrm{pdeg}_{\alpha}\mathcal{E}=\textrm{deg}\mathcal{E}+\sum_{p\in D}\sum_{j=1}^r \alpha_{p}^j.
\end{equation*}
For a proper holomorphic subbundle $\mathcal{F}\subset\mathcal{E}$ satisfying $\theta:\mathcal{F}\rightarrow\mathcal{F}\otimes K(D)$, the parabolic degree is defined to be:
\begin{equation*}
\textrm{pdeg}_{\alpha}\mathcal{F}=\textrm{deg}\mathcal{F}+\sum_{p\in D}\sum_{j=1}^r \alpha_{p}^j\textrm{dim}((\restr{\mathcal{F}}{p}\cap l_{p}^{j-1})/(\restr{\mathcal{F}}{p}\cap l_{p}^{j})).
\end{equation*}
The meromorphic Higgs bundle, together with the parabolic structure is said to be $\alpha$-stable, if for any $\mathcal{F}$:
\begin{equation*}
    \frac{\textrm{pdeg}_{\alpha}\mathcal{E}}{\textrm{rank}\mathcal{E}}>\frac{\textrm{pdeg}_{\alpha}\mathcal{F}}{\textrm{rank}\mathcal{F}}.
\end{equation*}

Let us fix a preferred $\alpha$-stable wild harmonic bundle $(\mathcal{E}, \theta , h)$ that has the desired local forms at each $p\in D$. 
We may choose a global ramified covering $\phi\colon \widetilde{X}\to X$ such that $\phi^* (\mathcal{E}, \theta , h)$ is an unramifiedly wild harmonic bundle near every $p\in D$. 
We then define weighted Sobolev spaces of endomorphisms of $\phi^* E$ with respect to local polar coordinates on $\widetilde{X}$ (rather than on $X$). 
This allows us to define an (infinite-dimensional) affine space of connections $\widetilde{\mathcal{A}}$ and gauge group $\widetilde{\mathcal{G}}$ by~\cite[Equations~(2.5)-(2.6)]{BB}. 
Let $\Gamma$ denote the group of deck transformations of $\phi$. 
Then, $\Gamma$ naturally acts on $\widetilde{\mathcal{A}}$ and on $\widetilde{\mathcal{G}}$, over the identity of $\widetilde{X}$. 
Let us set 
\[
    \mathcal{A} = \widetilde{\mathcal{A}}^{\Gamma}, \quad \mathcal{G} = \widetilde{\mathcal{G}}^{\Gamma} 
\]
the $\Gamma$-invariant sections of $\widetilde{\mathcal{A}}, \widetilde{\mathcal{G}}$. 
The gauge theory of~\cite[Section~5]{BB} carries over to this $\Gamma$-equivariant setup. 
In particular, the equivariant version of~\cite[Theorem~5.4]{BB} holds true: the moduli space of $\Gamma$-invariant solutions of the self-duality equations on $\widetilde{X}$ whose difference from $\phi^*(\mathcal{E}, \theta , h)$ satifies suitable decay conditions exists, and is a smooth hyperK\"ahler manifold. 
By conformal equivalence of the equations and Galois descent, such solutions are in bijection to solutions of the self-duality equations on $X$ whose difference from $(\mathcal{E}, \theta , h)$ satifies suitable decay conditions.

Using the same argument as~\cite[Sections~7,8]{BB}, unitary gauge equivalence classes of solutions to the self-duality equations are in bijection with complex gauge equivalence classes of $\alpha$-stable meromorphic Higgs bundles having the required normal forms, and also with complex gauge equivalence classes of $\beta$-stable irregular singular connections having the required normal forms. 

In all cases, completeness follows from~\cite[Theorem~0.2]{BB} too, provided that ${\mathcal{M}}_{Dol}^{JKT*}$ is smooth, which is the case for generic choices of the parameters (see~\cite[Section~8.1]{BB}), that we have assumed. 

The interpretation in terms of integrable connections with suitably fixed irregular types also follows from~\cite{BB}. 
The transformation of the irregular types from Dolbeault to de Rham local forms is $\frac 12 q_i \mapsto q_i$. 
The transformation of the $\nu_i$ eigenvalues of the residues and the $\alpha_i$ parabolic weights obeys Simpson's formulas:
\begin{equation}
\label{eq:simpson}
    \alpha_i = \operatorname{Re} \mu_i, \quad \nu_i = \frac{\mu_i - \beta_i}{2}, 
\end{equation}
where $\mu_i$ are the eigenvalues of the residues and $\beta_i$  the parabolic weights on the de Rham side. 

As for the dimension, it is easy to compute. 
In Section~\ref{sec:Dol} we will give a construction in terms of elliptic pencils, while the constructions of the diffeomorphic wild character varieties as affine cubic surfaces over $\mathbb{C}$ were given in our previous paper \cite{ESz2}.
\end{proof}

The equivalence between the normal forms given in~\eqref{polar1}--\eqref{polar6} and the Puiseux coefficients of the eigenvalues of the Higgs field is formulated in the following lemma.

\begin{lemma}\label{lem:forms}
The Laurent polynomials $q_i$ appearing in~\eqref{eq:decomposition} can be given by the below expansions~\eqref{eq:qi_JKTVI}--\eqref{eq:qi_JKTI} (up to terms of nonnegative degree).

    Conversely, assume that the eigenvalues have the forms \eqref{eq:qi_JKTVI}-\eqref{eq:qi_JKTI} (up to terms of non-negative degree). Then there exist polynomial gauge transformations in the indeterminate $w$, such that the polar parts of \eqref{laurent} in the six cases have the normal forms~\eqref{polar1},~\eqref{polar2},~\eqref{polar3},~\eqref{polar4},~\eqref{polar5},~\eqref{polar6}, respectively. 
    \begin{enumerate}
        \item The Laurent polynomials equivalent to~\eqref{polar1} are
        \begin{equation}\label{eq:qi_JKTVI}
    \frac{1}{2}q_i = - a_i w^{-1}, \quad 0 \leq i \leq 2, 
\end{equation}
with corresponding residue eigenvalues $b_i$. 

        \item The Laurent polynomials equivalent to~\eqref{polar2} are
        \begin{gather}
    \frac{1}{2}q_{0,1}=q_{\pm}=\lambda_2 w^{-1/2}-a_0w^{-1}\nonumber \\
    \frac{1}{2}q_2=-a_1w^{-1}. \label{eq:qi_JKTV}
\end{gather}
with corresponding residue eigenvalues $0,b_1,b_2$. Here $q_{0,1}$ is a bi-valued branched meromorphic function, depending on the values of the square root. The coefficient $\lambda_2$ depends only on $b_0,b_1$.
See~\cite[Equations~(4.5a),~(4.5b)]{JKTI}.

        \item The Laurent polynomials equivalent to~\eqref{polar3} are
        \begin{gather}
    \frac{1}{2}q_{0,1,2}= \lambda_1w^{-1/3}+\lambda_2w^{-2/3}-a_0w^{-1} \label{eq:qi_JKTIVa} 
\end{gather}
with the corresponding residue eigenvalues $0,0,b_2$. Here, $q_{0,1,2}$ is a tri-valued branched meromorphic function, depending on the values of the cubic root. The coefficients $\lambda_2,\lambda_1$ depend only on $b_0,b_1,b_2$. 

        \item The Laurent polynomials equivalent to~\eqref{polar4} are
        \begin{gather}\label{eq:qi_JKTIVb}
    \frac{1}{2}q_i = -b_iw^{-1}-\frac{1}{2}a_iw^{-2}, \quad 0 \leq i \leq 2, 
\end{gather}
with corresponding residue eigenvalues $c_i$. 

        \item The Laurent polynomials equivalent to~\eqref{polar5} are
        \begin{gather}
     \frac{1}{2}q_{0,1}=q_{\pm}=\lambda_1w^{-1/2}+\lambda_2w^{-1}+\lambda_3w^{-3/2}-\frac{1}{2}a_0w^{-2} \nonumber \\
     \frac{1}{2}q_2=-b_2w^{-1}-\frac{1}{2}a_1w^{-2}. \label{eq:qi_JKTII}
\end{gather}
with corresponding residue eigenvalues $0,c_1,c_2$. Here, $q_{0,1}$ is a bi-valued branched meromorphic function, depending on the values of the square root. The coefficients $\lambda_1,\lambda_2,\lambda_3$ depend only on $b_0,b_1,c_0,c_1$.

        \item The irregular parts equivalent to \eqref{polar6} are
        \begin{gather}\label{eq:qi_JKTI}
    \frac{1}{2}q_{0,1,2} =\lambda_1w^{-1/3}+\lambda_2w^{-2/3}+\lambda_3w^{-1}+\lambda_4w^{-4/3}+\lambda_5w^{-5/3}-\frac{1}{2}a_0w^{-2},
\end{gather}
with corresponding residue eigenvalues $0,0,c_2$. Here, $q_{0,1,2}$ is a tri-valued branched meromorphic function, depending on the values of the cubic root. The coefficients $\lambda_i$ ($1\leq i\leq 5$) depend only on $b_i,c_i$.
    \end{enumerate}
    
\end{lemma}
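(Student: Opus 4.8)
The plan is to treat both implications as purely local formal computations at $\infty$, carried out over the field of Puiseux series $\bigcup_{N\ge 1}\mathbb{C}(\!(w^{1/N})\!)$, in which the characteristic polynomial of $\theta/\mathrm{d}w$ splits into linear factors. In these terms the forward implication says that the polar parts of the roots $\lambda_i$ of this characteristic polynomial, written in the shape $\tfrac12\,\mathrm{d}q_i+\nu_i\,\mathrm{d}w/w$ of \eqref{eq:decomposition}, are exactly the expansions \eqref{eq:qi_JKTVI}--\eqref{eq:qi_JKTI}; the converse says that any $\theta$ whose eigenvalues have these polar parts, together with the prescribed residues, is gauge equivalent --- by a transformation that may be taken polynomial in $w$ once terms of nonnegative degree are discarded --- to the matrix normal forms \eqref{polar1}--\eqref{polar6}. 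Both rest on the formal Hukuhara--Turrittin--Levelt classification, in the Higgs-bundle form used in \cite{Moc}.

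For the forward direction I would compute, in each case, the characteristic polynomial $\det(\theta/\mathrm{d}w-\lambda I)$ and extract its roots by the Newton--Puiseux algorithm. In the untwisted cases \eqref{polar1} and \eqref{polar4} the leading coefficient is diagonal with pairwise distinct entries $a_i$; the Newton polygon then has integer slopes only, the roots are genuine Laurent series, and reading off the coefficients of $w^{-2},w^{-1}$ (respectively $w^{-3},w^{-2},w^{-1}$) produces \eqref{eq:qi_JKTVI} and \eqref{eq:qi_JKTIVb} with residue eigenvalues $b_i$ (respectively $c_i$). In the twisted cases the leading term carries a Jordan block, so the Newton polygon acquires a segment of denominator $N=2$ for \eqref{polar2},\eqref{polar5} and $N=3$ for \eqref{polar3},\eqref{polar6}. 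A short computation of the relevant segment --- for example $\mu^{3}-b_2w^{-1}\mu^{2}-b_1w^{-3}\mu-b_0w^{-5}=0$ for $\mu=\lambda-a_0w^{-2}$ in case \eqref{polar3} --- exhibits the leading fractional exponent and shows that the $N$ conjugate branches are permuted by $w^{1/N}\mapsto\zeta w^{1/N}$ for a primitive $N$-th root of unity $\zeta$, scaling the fractional-power coefficients accordingly. Integrating and matching degrees then gives the stated $\tfrac12 q_{0,1}$ or $\tfrac12 q_{0,1,2}$, the dependence of the $\lambda_j$ on the residue data $b_i,c_i$ alone, and the residue eigenvalues recorded in \eqref{eq:qi_JKTV}--\eqref{eq:qi_JKTI}.

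For the converse I would separate the two situations. In the untwisted cases the pairwise distinct leading eigenvalues make the operator $\xi\mapsto[\xi,\operatorname{diag}(a_i)]$ invertible on off-diagonal matrices, so a standard order-by-order iteration removes all off-diagonal entries and conjugates the diagonal part into the prescribed coefficients; since only the polar part matters, the iteration terminates and the resulting gauge is polynomial in $w$. In the twisted cases I would instead invoke the formal decomposition directly: each Galois orbit of Puiseux eigenvalues corresponds to a single indecomposable rank-$N$ formal piece, and the matrix normal forms \eqref{polar2},\eqref{polar3},\eqref{polar5},\eqref{polar6} are precisely the standard $\mathbb{C}(\!(w)\!)$-rational models of such pieces --- equivalently, the companion (cyclic-vector) presentation of the push-forward $(\phi_{N})_{*}$ of a rank-one object along $w=u^{N}$, whose leading symbol is the displayed Jordan block and whose subdiagonal (resp. bottom-row) residue records $\nu_i$ and the nilpotent weight. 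The classification then provides a formal gauge equivalence to this model, which truncates to a polynomial one on polar parts.

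The step I expect to be the main obstacle is the twisted converse, where two things must be pinned down at once. First, one must check that the descent of the split Puiseux branches yields exactly the prescribed Jordan-plus-subdiagonal matrix shape, and not merely some matrix with the correct eigenvalues: this forces control of the nilpotent part and of the precise location of the residue entries ($b_0$ in \eqref{polar2}; the bottom rows in \eqref{polar3},\eqref{polar6}; the relevant subdiagonal blocks in \eqref{polar5}). Second, one must verify the fine bookkeeping of the Newton--Puiseux coefficients against \eqref{eq:qi_JKTV}, \eqref{eq:qi_JKTIVa}, \eqref{eq:qi_JKTII}, \eqref{eq:qi_JKTI} --- that the top-degree term is the common $-\tfrac12 a_0 w^{-2}$ across all branches, that the remaining $\lambda_j$ depend only on $b_i,c_i$, and that the conjugate branches differ by the expected powers of $\zeta$ --- and confirm that the descended gauge transformation is genuinely polynomial in $w$. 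Everything else reduces to routine linear algebra over Laurent series.
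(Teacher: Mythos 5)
Your forward direction (Newton--Puiseux on the characteristic polynomial; your sample computation for \eqref{polar3} is correct) and your untwisted converse (order-by-order splitting using invertibility of the adjoint action of the regular semisimple leading coefficient, then truncation) are sound, and they coincide with the paper's own route, which simply remarks that these cases ``can be calculated directly using the quadratic and cubic formulas'' and otherwise cites \cite[Theorem~1.3]{KSz}. The genuine gap is in your twisted converse, and it is not the coefficient bookkeeping you flag: it is a mismatch of gauge groups. The eigen-decomposition of $\theta$ over the Puiseux field and Galois descent produce an isomorphism of Higgs modules over $\mathbb{C}((w))$, i.e.\ a \emph{meromorphic} gauge equivalence with the push-forward model; such an equivalence cannot be ``truncated to a polynomial one on polar parts'', because meromorphic gauge transformations preserve neither the lattice nor the polar part. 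What the lemma asserts is equivalence under gauge transformations regular and invertible at $w=0$, and eigenvalue data over $\mathbb{C}((w))$ does not determine the polar part up to such gauge: the example $\theta=\left(\begin{smallmatrix} a_0w^{-2} & (a_1-a_0)w^{-3} \\ 0 & a_1w^{-2}\end{smallmatrix}\right)\otimes \mathrm{d}w$ has eigenvalues exactly $a_iw^{-2}\,\mathrm{d}w$, yet its pole order $3$ is preserved by every gauge transformation invertible at the origin, so it is not equivalent to the corresponding diagonal normal form. Hence any correct proof of the converse must use, at the level of the lattice, the hypothesis implicit in \eqref{laurent} that the matrix of $\theta$ has pole order $\leq m_i$; your untwisted argument uses it (through the leading matrix coefficient), but your twisted argument, which only sees the $\mathbb{C}((w))$-module, cannot. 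Appealing instead to the decomposition \eqref{eq:decomposition} of \cite{Moc} at lattice level would be circular, since that decomposition is a property of (unramifiedly good) wild harmonic bundles, not of a bare meromorphic Higgs field satisfying only your hypotheses.

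The missing bridge is precisely what the paper outsources to \cite[Theorem~1.3]{KSz}. If you want to keep your architecture, it can be supplied by a lattice-uniqueness argument: in the maximally twisted case, the pole-order bound says the lattice $E$ is stable under $A := w^{m}\,\theta/\mathrm{d}w$, hence is a module over the finite, complete local $\mathbb{C}[[w]]$-algebra $\mathbb{C}[[w]][A]$; identifying the generic fibre with $\mathbb{C}((u))$, $u^{3}=w$, the element $A$ acts as $a_0+\lambda' u+\cdots$ with $\lambda'\neq 0$ (this is where the nonvanishing of the fractional-exponent coefficients in \eqref{eq:qi_JKTIVa}, \eqref{eq:qi_JKTI} for generic $b_i$, $c_i$ enters), so $\mathbb{C}[[w]][A]=\mathbb{C}[[u]]$; therefore $E$ is a $\mathbb{C}[[u]]$-lattice in $\mathbb{C}((u))$, i.e.\ $E=u^{k}\mathbb{C}[[u]]$, and multiplication by $u^{-k}$ identifies $(E,\theta)$ with the push-forward lattice. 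Only after this identification does your concluding step (the matrix of multiplication by the eigenvalue in the basis $1,u,u^{2}$, followed by a residual polynomial gauge) have an object to act on, and that computation still must be carried out to land on \eqref{polar3} and \eqref{polar6} --- and, after splitting off the unramified summand by your untwisted argument, on \eqref{polar2} and \eqref{polar5}. Alternatively, replace the whole twisted step by the direct order-by-order reduction with Jordan-block leading term, which is the content of \cite[Theorem~1.3]{KSz} and is the paper's actual proof.
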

\begin{proof}
    The particular cases of interest to us can be calculated directly using the quadratic and cubic formulas. The statement also follows from~\cite[Theorem~1.3]{KSz}.
    See~\cite[Theorem 2.1]{KSz} for a more general statement in arbitrary rank.
\end{proof}

\section{Dolbeault spaces}\label{sec:Dol}

In this section, we adapt~\cite{ASz} along the lines of~\cite[Proposition~6.4]{ISSz1},~\cite{ISSz2} to construct, for each $* \in \{VI,V,IVa,IVb,II,I\}$, a surface $Z^*$ such that the BNR (or spectral) correspondence~\cite{BNR} establishes an algebraic isomorphism between $\mathcal{M}_{\textrm{Dol}}^{JKT*}$ and the moduli space $\mathcal{M}_{\textrm{Muk}}^{*}$ of sheaves on $Z^*$ with suitably fixed Chern classes~\cite{Muk}. 
The results of this section will be used for the proof of algebraic symplectic isomorphism of the Dolbeault spaces Theorem~\ref{thm:Dolisometry}. 
The surface $Z^*$ can be defined as a suitable blow-up of the Hirzebruch surface of index one, followed by the blow-down of its section at $\infty$:
\[
   \omega\colon Z^* \dashrightarrow  \mathbb{F}_1.
\]
As it is standard~\cite{BNR} in the case of regular Higgs bundles without poles, we will see that in each of our cases, all the spectral curves are disjoint from the section at infinity $\sigma_{\infty}$, so that its blow-down is an isomorphism restricted to the spectral curves. 
We have $[\sigma_{\infty}] \cap [\sigma_{\infty}] = -1$, where $\cap$ stands for the intersection form on $H_2 (Z^*, \mathbb{Q})$. 
We note that a general spectral description of such symplectic leaves has been provided by~\cite{Sz_BNR} in the unramified case and by Diaconescu--Donagi--Pantev~\cite{DDP} in general. 

Since the length of $D$ is 3, we have the isomorphism of line bundles:
\begin{equation*}
    K_{\mathbb{C}P^1}(D)\cong\mathcal{O}_{\mathbb{C}P^1}(1).
\end{equation*}
Let us consider the graded sheaf of $\mathcal{O}_{\mathbb{C}P^1}$-algebras 
\[
    \operatorname{Sym}^{\bullet} (K_{\mathbb{C}P^1}(D)^{-1}) = \bigoplus_{j=0}^{\infty} \operatorname{Sym}^j (K_{\mathbb{C}P^1}(D)^{-1})
\]
The fiber-wise projectivization~\cite[Definition~II.5.7]{MO} of the line bundle $K_{\mathbb{C}P^1}(D)$ is then defined as 
\[
    \mathbb{F}_1 = \mathbb{P} (K_{\mathbb{C}P^1}(D) \oplus \mathcal{O}_X) =  \operatorname{Proj} \left(\operatorname{Sym}^{\bullet} (K_{\mathbb{C}P^1}(D)^{-1}) \right) , 
\]
and is called the Hirzebruch surface of index one. 
This construction immediately gives a canonical fiber bundle structure 
\[
    p\colon \mathbb{F}_1\rightarrow\mathbb{C}P^1
\]
called the ruling, whose fibers are isomorphic to $\mathbb{C}P^1$. 
Said differently, $\mathbb{F}_1$ is a rational ruled surface. 
The relative ample bundle of $p$, denoted by $\mathcal{O}_{\mathbb{F}_1|\mathbb{C}P^1}(1)$, is defined by the sheaf of graded $\operatorname{Sym}^{\bullet} (K_{\mathbb{C}P^1}(D)^{-1})$-modules 
\[
    \left( \operatorname{Sym}^{\bullet} (K_{\mathbb{C}P^1}(D)^{-1}) \right) (1) = \bigoplus_{j=0}^{\infty} \operatorname{Sym}^{j+1} (K_{\mathbb{C}P^1}(D)^{-1}), 
\]
see~\cite[Section~III.1]{MO}. 
We infer 
\begin{align}
    p_* \mathcal{O}_{\mathbb{F}_1|\mathbb{C}P^1}(1) & \cong \operatorname{Sym}^{\bullet} (K_{\mathbb{C}P^1}(D)^{-1}) / \operatorname{Sym}^{\bullet} (K_{\mathbb{C}P^1}(D)^{-1})(2) \notag \\
    & \cong \operatorname{Sym}^0 (K_{\mathbb{C}P^1}(D)^{-1}) \oplus \operatorname{Sym}^1 (K_{\mathbb{C}P^1}(D)^{-1}) \notag \\
    & = \mathcal{O}_{\mathbb{C}P^1} \oplus K_{\mathbb{C}P^1}(D)^{-1}. \label{eq:direct_image_relative_ample_sheaf}
\end{align}
In particular, we get canonical sections of $\mathcal{O}_{\mathbb{F}_1|\mathbb{C}P^1}(1)$ and of $\mathcal{O}_{\mathbb{F}_1|\mathbb{C}P^1}(1) \otimes p^* K_{\mathbb{C}P^1}(D)$ by pulling back by $p$ the natural section $1 \in H^0 ( \mathbb{C}P^1 , \mathcal{O}_{\mathbb{C}P^1})$. 
Let us denote these canonical sections by 
\begin{equation*}
    \zeta\in H^0(\mathbb{F}_1,p^*K_{\mathbb{C}P^1}(D)\otimes\mathcal{O}_{\mathbb{F}_1|\mathbb{C}P^1}(1)),\hspace{0.3cm}\xi\in H^0(\mathbb{F}_1,\mathcal{O}_{\mathbb{F}_1|\mathbb{C}P^1}(1)). 
\end{equation*}
The subvariety of $\mathbb{F}_1$ defined by $\xi = 0$ is denoted by $\sigma_{\infty}$ and called the section at infinity of $p$. 
The Zariski open subset $\mathbb{F}_1 \setminus \sigma_{\infty}$ is naturally isomorphic to the total space $\operatorname{Tot}(K_{\mathbb{C}P^1}(D))$ of the line bundle $K_{\mathbb{C}P^1}(D)$ over $\mathbb{C}P^1$. 
The restriction of $\zeta \xi^{-1}$ to $\operatorname{Tot}(K_{\mathbb{C}P^1}(D))$ agrees with the canonical section of $p^* K_{\mathbb{C}P^1}(D)$.
\begin{defn}
    The \emph{spectral curve} $\Sigma_{(\mathcal{E},\theta)}$ of $(\mathcal{E},\theta)$ is the algebraic curve determined by the equation $\chi_{\theta}(\zeta, \xi )=0$, where $\chi_{\theta}$ stands for the homogenized characteristic polynomial of $\theta$, namely:
\begin{equation}
\label{char}
    \chi_{\theta}(\zeta, \xi)=\textrm{det}(\zeta I_{\mathcal{E}}-\xi p^*\theta)=\zeta^3+F_{\theta}\zeta^2\xi+G_{\theta}\zeta\xi^2+H_{\theta}\xi^3.
\end{equation}
\end{defn}

The characteristic polynomial is a section of $p^*K_{\mathbb{C}P^1}(D)^{\otimes3}\otimes\mathcal{O}_{\mathbb{F}_1|\mathbb{C}P^1}(3)$ over $\mathbb{F}_1$, where $\mathcal{O}_{\mathbb{F}_1|\mathbb{C}P^1}(n)=\mathcal{O}_{\mathbb{F}_1|\mathbb{C}P^1}(1)^{\otimes n}$ is the notation for the tensor power. 
In particular, its characteristic coefficients $F_{\theta},G_{\theta},H_{\theta}$ satisfy: 
\begin{align*}
    F_{\theta} & \in H^0(\mathbb{C}P^1,K_{\mathbb{C}P^1}(D)^{}),\\ 
    G_{\theta} & \in H^0(\mathbb{C}P^1,K_{\mathbb{C}P^1}(D)^{\otimes2}) \\
    H_{\theta} & \in H^0(\mathbb{C}P^1,K_{\mathbb{C}P^1}(D)^{\otimes3})
\end{align*}
The direct sum of these spaces of sections is denoted by $\mathcal{B}$ and called the Hitchin base. 
Its dimension can be easily computed, as $\textrm{dim}_{\mathbb{C}}\mathcal{B}=2+3+4=9$, because of the identification 
\begin{equation*}
    H^0(\mathbb{C}P^1,K_{\mathbb{C}P^1}(D)^{\otimes n})\cong H^0(\mathbb{C}P^1,\mathcal{O}_{\mathbb{C}P^1}(n))\cong \mathbb{C}^{n+1}.
\end{equation*}
The characteristic coefficients of arbitrary rank $3$ meromorphic Higgs bundles on $\mathbb{C}P^1$ with poles bounded by $D$ (i.e., elements of the Poisson moduli space considered in~\cite{Mark}) take values in this Hitchin base. 
We are interested in the image of the symplectic leaves. 
It is known in general~\cite[Corollary~8.10]{Mark} that the symplectic leaves of the Poisson moduli space with varying numerical parameters are given by a coset-foliation on $\mathcal{B}$. 
In the cases under consideration, in concrete terms we have: 
\begin{prop}
For fixed values of the parameters in the local forms of $\theta$, the set of possible $F_{\theta},G_{\theta},H_{\theta}$ forms an affine subspace of $\mathcal{B}$ of dimension $1$. 
\end{prop}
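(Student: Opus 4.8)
The plan is to show that fixing the local forms of $\theta$ imposes only \emph{affine-linear} conditions on the triple $(F_\theta,G_\theta,H_\theta)\in\mathcal{B}$, so that the admissible locus is automatically a coset of a linear subspace, and then to count that exactly eight of the nine coordinates get constrained. First I would invoke Lemma~\ref{lem:forms}: fixing the prescribed normal forms \eqref{polar1}--\eqref{polar6} is equivalent to fixing the polar parts (the strictly negative Puiseux coefficients) of the three eigenvalue-branches of the spectral curve along the fibre over each singular point. Since $F_\theta,G_\theta,H_\theta$ are, up to sign, the elementary symmetric functions $e_1,e_2,e_3$ of these branches, prescribing the branch polar parts amounts to prescribing the values of a collection of leading Laurent coefficients of $F_\theta,G_\theta,H_\theta$ in local trivialisations at the $p_i$. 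Each such prescription ``this Laurent coefficient equals that constant'' is an affine-linear equation on $\mathcal{B}$, whence the set of admissible $(F_\theta,G_\theta,H_\theta)$ is an affine subspace; it then remains only to compute its dimension.

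The structural input making the count work is that, because the leading terms of the branches are regular (distinct eigenvalues attached to the distinct Jordan blocks), the Newton--Puiseux expansion is triangular: the top Laurent coefficients of $e_1,e_2,e_3$ depend only on the polar parts of the branches, whereas the lower ones also involve the \emph{unconstrained} regular ($O(1)$ and higher) parts. Thus prescribing the branch polar parts pins down precisely the top coefficients of $F_\theta,G_\theta,H_\theta$ and leaves the remaining ones free. In particular $F_\theta=-\operatorname{tr}\theta$ is a global section of $K_{\CP1}(D)$ whose polar part is completely determined (it is the trace of the fixed leading matrices); since $H^0(\CP1,K_{\CP1})=0$, a section of $K_{\CP1}(D)$ is determined by its polar part, so $F_\theta$ is rigid.

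I would then carry out the count explicitly in the untwisted cases. Take $D=2\{\infty\}+\{0\}$ (case \textrm{JKTVI}): in a global trivialisation the data $(F_\theta,G_\theta,H_\theta)$ consists of $2+3+4=9$ coefficients. At the order-two point $\infty$ the distinct leading eigenvalues $a_i$ together with the residues $b_i$ fix the top two coefficients of each of the three symmetric functions, i.e. six coefficients; at the logarithmic point $0$ the residue eigenvalues $\tau_i$ fix the single leading coefficient of each, i.e. three more. The only coincidence between these two groups is the residue of $F_\theta$, whose prescriptions at $0$ and at $\infty$ agree precisely by the residue theorem (the stated trace constraint relating $b_0+b_1+b_2$ and $\operatorname{Tr Res}_0\theta$). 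This yields $6+3-1=8$ independent affine conditions, so the admissible locus has dimension $9-8=1$; concretely $F_\theta$ and $G_\theta$ become rigid while $H_\theta$ retains one free coefficient, which the freedom in the regular parts of the branches shows is genuinely realised. The case $D=3\{\infty\}$ (case \textrm{JKTIVb}) runs identically: here the would-be ninth condition falls on the $w^{-1}$-part of $F_\theta$, which every global section of $K_{\CP1}(3\{\infty\})\cong\mathcal{O}_{\CP1}(1)$ is forced to make vanish, and this forced vanishing is exactly the residue theorem $c_0+c_1+c_2=0$; so again eight conditions remain and $H_\theta$ keeps a single free coefficient.

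Finally, the twisted cases are to be handled by the same three-step scheme---prescribe branch polar parts, translate into leading coefficients of $F_\theta,G_\theta,H_\theta$, count---now using the ramified Puiseux expansions of Lemma~\ref{lem:forms} running in $w^{1/2}$ or $w^{1/3}$. The main obstacle is precisely this ramified Newton-polygon bookkeeping: one must track which Galois-symmetric fractional terms cancel in $e_1,e_2,e_3$ and which survive, determine exactly which top coefficients of $F_\theta,G_\theta,H_\theta$ thereby get fixed, and re-derive the correct consistency (residue-theorem) relation accounting for the single overlap. I expect each of these reductions to follow from the quadratic and cubic root computations already recorded in Lemma~\ref{lem:forms}, and the count to come out to eight independent conditions in every case, leaving the asserted dimension $1$---consistent with $\mathcal{M}_{\mathrm{Dol}}^{JKT*}$ being a complex surface fibred in genus-one curves over this base.
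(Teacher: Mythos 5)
Your proposal is correct and is essentially the paper's own argument: both identify the constraints coming from the fixed polar data as affine-linear conditions on $(F_\theta,G_\theta,H_\theta)$ and count eight independent ones (three per direct summand of $\mathcal{B}$, with the single redundancy, falling on $F_\theta$, supplied by the residue theorem), leaving dimension $9-8=1$; your per-point count $6+3-1=8$ in case \eqref{polar1} and your treatment of \eqref{polar4} agree exactly with the paper's per-summand count $2+3+3$. The ramified bookkeeping you defer does come out to eight conditions in the twisted cases as well, and it can be done without any Puiseux analysis: writing $\theta=\bigl(\sum_{j\geq -m_i}A_jt^j\bigr)\operatorname{d}\!t$ near $p_i$, every term of the characteristic coefficient $e_k(\theta)$ containing some $A_j$ with $j\geq 0$ has pole order at most $km_i-m_i$ at $p_i$, so precisely the top $m_i$ Laurent coefficients of each $e_k$ are pinned down at each point (and, by regularity of the leading term, no more), giving $\operatorname{length}(D)=3$ linear conditions per summand uniformly in all six cases --- which is exactly the uniform count asserted in the paper's proof.
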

\begin{proof}
The proof mimics those of other similar results. 
For the logarithmic rank $3$ case whose deformations are the current irregular curves, see~\cite[Section~2.3]{ESz}.
For ramified irregular singularities in the rank $2$ Painlev\'e cases, see~\cite[Proposition~6.4]{ISSz1},~\cite[Section~5.1]{ISSz2}.

Since we fix the parameters appearing in the polar part of $\theta$ in the singular points, and the length of $D$ is $3$, fixing this data decreases the degree of freedom by $3$ in each direct summand of $\mathcal{B}$ (by $2$ in the first component, where we have a redundant condition). 
The imposed conditions are linear. 
We infer that one degree of freedom remains. 
\end{proof}

\begin{defn}
The map
\begin{align*}
    h:\mathcal{M}_{Dol}^{JKT*} & \rightarrow\mathbb{C} \\ 
    (\mathcal{E}, \theta) & \mapsto(F_{\theta},G_{\theta},H_{\theta})
\end{align*}
is called the \emph{Hitchin fibration}. 
\end{defn}
Here, we have fixed an affine isomorphism between the image of $h$ and $\mathbb{C}$, as it is justified by the proposition.
For a given value $b_0 \in \mathcal{B}$, the fiber $h^{-1}(b_0)$ consists of pairs $(\mathcal{E},\theta)$ for which $\textrm{det}(\theta) = b_0$. 

\begin{theorem}[Beauville--Narasimhan--Ramanan correspondence \cite{BNR},{\cite[Theorem 5.4]{Sz_BNR}}]\label{thm:BNR}
    There is an equivalence of categories between the groupoid of irregular Higgs bundles of rank $3$ and degree $d$ with prescribed polar part and the relative Picard scheme parameterizing torsion-free sheaves of rank $1$ and of degree $d+3$ on holomorphic curves in a given homology class $[F_{\infty}^{*}]\in H_2 (Z^*, \mathbb{Z})$ on a certain multiple blow-up $Z^*$ of the Hirzebruch surface $\mathbb{F}_1$. 
\end{theorem}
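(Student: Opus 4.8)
The plan is to realize the asserted equivalence as the classical eigensheaf construction of Beauville--Narasimhan--Ramanan, transplanted to the total space of $K_{\mathbb{C}P^1}(D)$ and adapted to the fixed polar data, following \cite{ASz} and \cite[Theorem~5.4]{Sz_BNR}. In one direction I would send an irregular Higgs bundle $(\mathcal{E},\theta)$ to its spectral curve $\Sigma_{(\mathcal{E},\theta)}$ cut out inside $\mathbb{F}_1$ by the homogenized characteristic polynomial \eqref{char}, together with the cokernel eigensheaf $M$ on $\Sigma$ obtained from $\zeta I_{\mathcal{E}}-\xi p^{*}\theta$. In the other direction I would send a rank-$1$ torsion-free sheaf $M$ supported on a curve $\Sigma$ in the fixed class $[F_{\infty}^{*}]$ to $\mathcal{E}=\pi_{*}M$, where $\pi$ is the ruling $p$ restricted to $\Sigma$, equipped with the Higgs field induced by multiplication by the tautological section $\zeta\xi^{-1}$. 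The substance of the theorem is that, once the polar part of $\theta$ is prescribed, these two functors are mutually quasi-inverse equivalences of groupoids.

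The geometric heart of the argument is the construction of $Z^{*}$ so that the spectral curves form a well-behaved family. By Lemma~\ref{lem:forms} the fixed normal forms \eqref{polar1}--\eqref{polar6} translate into prescribed Puiseux data for the eigenvalues of $\theta$ at $\infty$; because $\theta$ has poles of order $2$ or $3$, every spectral curve in the symplectic leaf accumulates onto $\sigma_{\infty}$ over the point $\infty$ with one and the same (possibly fractional, in the twisted cases) tangency profile. I would blow up $\mathbb{F}_1$ precisely at these base points and their relevant infinitely near points, arranged so that the strict transforms of all the spectral curves (i) lie in a single homology class $[F_{\infty}^{*}]\in H_2(Z^{*},\mathbb{Z})$, and (ii) become disjoint from the strict transform of $\sigma_{\infty}$. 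Since the curves then miss $\sigma_{\infty}$ and $[\sigma_{\infty}]\cap[\sigma_{\infty}]=-1$, the subsequent blow-down $\omega\colon Z^{*}\dashrightarrow\mathbb{F}_1$ of $\sigma_{\infty}$ is an isomorphism in a neighbourhood of each spectral curve, so the eigensheaf correspondence may be carried out equivalently on $Z^{*}$.

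With $Z^{*}$ fixed, the equivalence is the standard spectral dictionary. Over the open part $\operatorname{Tot}(K_{\mathbb{C}P^1}(D))=\mathbb{F}_1\setminus\sigma_{\infty}$ this is literally \cite{BNR}; near $\sigma_{\infty}$ one invokes the vanishing of the higher direct images of the finite map $\pi$ together with \cite[Theorem~5.4]{Sz_BNR} to extend the correspondence across the (resolved) singular locus, where $M$ is allowed to be torsion-free of rank $1$ rather than a genuine line bundle, which is exactly why the target is the relative compactified Picard scheme. The degree shift then follows from Riemann--Roch: the spectral curves are anticanonical in $Z^{*}$, hence of arithmetic genus $g_{\Sigma}=1$ (this is what makes the Hitchin map $h$ an elliptic pencil, its base being $1$-dimensional by the Proposition above), and finiteness of $\pi$ gives $\chi(\pi_{*}M)=\chi(M)$; for a degree-$3$ cover of $\mathbb{C}P^1$ this reads $\deg(\pi_{*}M)+3=\deg M+1-g_{\Sigma}$, so $\deg(\pi_{*}M)=d$ forces $\deg M=d+2+g_{\Sigma}=d+3$, as claimed.

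The step I expect to be the main obstacle is the twisted (minimally and maximally ramified) cases, where the leading coefficient of $\theta$ carries Jordan blocks and the eigenvalues acquire fractional exponents $w^{-1/2}$ and $w^{-1/3}$. There the naive spectral curve is non-reduced or singular along $\sigma_{\infty}$, and one must verify that the blow-up sequence dictated by the ramified local model genuinely separates the Puiseux branches, produces strict transforms disjoint from $\sigma_{\infty}$ with the stated self-intersection bookkeeping, and that $\pi_{*}$ of a rank-$1$ torsion-free sheaf reproduces exactly a Higgs bundle with the prescribed normal form rather than a nearby one. Checking this case by case --- equivalently, matching the equivariant BNR construction of \cite{Sz_BNR} against \eqref{polar1}--\eqref{polar6} exactly as in \cite[Proposition~6.4]{ISSz1} and \cite{ISSz2} --- is where the real work lies; the logarithmic and untwisted cases are routine consequences of \cite{BNR} and \cite{ASz}.
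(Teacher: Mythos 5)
Your overall architecture coincides with the paper's: the same two functors ($(\mathcal{E},\theta)\mapsto \mathcal{S}_{(\mathcal{E},\theta)}$ as cokernel of $\zeta I_{\mathcal{E}}-\xi p^*\theta$, and $\mathcal{S}\mapsto (p_*\mathcal{S}, p_*(\zeta\cdot))$ back), the same reduction of the substantive part to matching the prescribed polar data \eqref{polar1}--\eqref{polar6} against the blow-up construction of $Z^*$, and the same division of cases (Jordan blocks of size at most $2$ handled as in \cite{ISSz1}, \cite{ISSz2}, the size-$3$ block being the genuinely new case). Your Riemann--Roch computation of the degree shift $d\mapsto d+3$ is correct and is a useful addition, since the paper states the shift without deriving it.

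There is, however, a genuine error in your geometric picture, and it sits exactly at the step that carries the content of the theorem. The spectral curves never meet $\sigma_{\infty}$, not even in the twisted cases: since $\theta$ is a section of $\operatorname{End}(\mathcal{E})\otimes K_{\mathbb{C}P^1}(D)$, its characteristic coefficients are holomorphic sections of $K_{\mathbb{C}P^1}(D)^{\otimes k}$, so over $\infty$ the curve $\chi_{\theta}=0$ passes through the finite points given by the eigenvalues of the leading coefficient (the points $a_0,a_1,a_2$, or $a_0$ alone in the totally ramified cases), never through $\xi=0$. Consequently your prescription to blow up $\mathbb{F}_1$ at the points where the curves ``accumulate onto $\sigma_{\infty}$'' would blow up an empty locus, and your claim that in the twisted cases the naive spectral curve is non-reduced or singular along $\sigma_{\infty}$ is likewise false: there the curve is totally ramified over $\infty$, i.e.\ tangent to the fiber $F$ at $a_0$ to order $2$ or $3$, and smooth for generic parameters. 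The correct blow-up locus is the base locus of the pencil spanned by $C_1=\Sigma$ and $C_2=3F+3\sigma_{\infty}$ (resp.\ $F_1+2F_2+3\sigma_{\infty}$), i.e.\ points on the fibers over $D$ together with their infinitely near points; the paper's key step is that $\theta$ has local form \eqref{polar1}--\eqref{polar6} if and only if $\Sigma_{(\mathcal{E},\theta)}$ contains this base locus scheme-theoretically (equivalently, \eqref{eq:fibration_equations} hold for the proper transform), which in the maximally ramified cases follows from the ramification degree at $\infty$ being $3$ together with the classical resolution of \cite{Hirz}, as in Sections~\ref{sec:DolIVa} and~\ref{sec:DolI}. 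Disjointness from $\sigma_{\infty}$ is an input that makes the final blow-down harmless, not an output achieved by the blow-ups.
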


As a matter of fact, we will see that $\mathcal{O}_{Z^*}(F_{\infty}^{*}) = K_{Z^*}^{-1}$, in other words $(Z^*, F_{\infty}^{*})$ is a log-Calabi--Yau surface. 
We will provide details of the construction of the blow-ups $Z^*$ and the curves $F_{\infty}^{*}$ in the cases under investigation in Sections~\ref{sec:DolVI}--\ref{sec:DolI}. 
The proof of the theorem will be given in Section~\ref{sec:spectral_sheaf}. 

As a consequence of this theorem, we can conclude the following proposition regarding the exact case we are investigating.

\begin{prop}
    For generic $b_0 \in \mathcal{B}$, the fiber $h^{-1}(b_0 )$ of the Hitchin fibration is a torsor over the Jacobian $\operatorname{Jac}(\Sigma_{(\mathcal{E},\theta)})$ for any $(\mathcal{E},\theta)\in h^{-1}(b_0 )$.
    In particular, $h^{-1}(b_0 )$ is a smooth elliptic curve. 
\end{prop}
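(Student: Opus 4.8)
The plan is to apply the Beauville--Narasimhan--Ramanan correspondence of Theorem~\ref{thm:BNR} fiberwise, once we know that the spectral curve over a generic point of the Hitchin base is smooth of genus one. By definition, the fiber $h^{-1}(b_0)$ consists of those $(\mathcal{E},\theta)$ whose characteristic coefficients equal $b_0$; in particular all of them share a common spectral curve $\Sigma=\Sigma_{(\mathcal{E},\theta)}\subset Z^*$, depending only on $b_0$ and lying in the homology class $[F_\infty^*]$, so the Jacobian appearing in the statement is unambiguous.

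First I would establish generic smoothness of $\Sigma$. As $b_0$ ranges over the image of $h$ (identified with $\mathbb{C}$), the curves $\Sigma$ sweep out a pencil inside the linear system $|F_\infty^*|$ on $Z^*$. The passage to the blow-up $Z^*$ followed by the blow-down of $\sigma_\infty$ is arranged precisely so that these curves are disjoint from the section at infinity and have their prescribed local behaviour over $D$ resolved. I would then invoke Bertini's theorem to conclude that the generic member of the pencil is smooth away from its base locus, and check by the explicit local models that it is smooth along the (finitely many) base points, which are fixed by the prescribed polar parts. The case-by-case verification is carried out in Sections~\ref{sec:DolVI}--\ref{sec:DolI}.

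The elliptic structure then follows from adjunction on the log-Calabi--Yau surface. Since $\mathcal{O}_{Z^*}(F_\infty^*) = K_{Z^*}^{-1}$ and $\Sigma\in|F_\infty^*|$, the adjunction formula gives
\[
    K_\Sigma \cong \left( K_{Z^*} \otimes \mathcal{O}_{Z^*}(\Sigma) \right)\big|_\Sigma \cong \left( K_{Z^*} \otimes K_{Z^*}^{-1} \right)\big|_\Sigma \cong \mathcal{O}_\Sigma,
\]
so a smooth connected $\Sigma$ has trivial canonical bundle, hence genus one and is a smooth elliptic curve; consequently $\operatorname{Jac}(\Sigma)=\operatorname{Pic}^0(\Sigma)$ is a one-dimensional abelian variety. (As a cross-check, Theorem~\ref{thm:moduli} gives $\dim_{\mathbb{C}}\mathcal{M}_{Dol}^{JKT*}=2$ while the base is $\mathbb{C}$, forcing the generic fiber, and hence $\operatorname{Jac}(\Sigma)$, to be one-dimensional.) Finally, applying Theorem~\ref{thm:BNR} with $\Sigma$ smooth, every rank $1$ torsion-free sheaf on $\Sigma$ is a line bundle, so $h^{-1}(b_0)$ is identified with $\operatorname{Pic}^{d+3}(\Sigma)$; tensoring by a fixed line bundle of degree $d+3$ trivialises this into a torsor over $\operatorname{Jac}(\Sigma)$. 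As a torsor over a one-dimensional abelian variety over $\mathbb{C}$, the fiber is itself a smooth genus-one curve, proving both assertions.

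I expect the main obstacle to be the generic smoothness of the spectral curve: one must control the base locus of the pencil $|F_\infty^*|$, which sits over the singular divisor $D$ and at infinity, where the polar parts are fixed and a naive application of Bertini fails. Handling this is exactly the purpose of the case-by-case construction of the blow-ups $Z^*$, which is why I would defer the local verifications to Sections~\ref{sec:DolVI}--\ref{sec:DolI}.
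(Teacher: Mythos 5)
Your proposal is correct and follows essentially the same route as the paper: fixing $b_0$ fixes the spectral curve, generic smoothness of that curve, and then the BNR correspondence identifies $h^{-1}(b_0)$ with a degree-$(d+3)$ Picard torsor over $\operatorname{Jac}(\Sigma_{b_0})$. The paper's own proof is much terser (it simply asserts generic smoothness and the torsor property), whereas you additionally justify smoothness via Bertini on the pencil in $|F_{\infty}^{*}|$ and the genus-one claim via adjunction on the log-Calabi--Yau pair $(Z^*, F_{\infty}^{*})$ --- details the paper leaves implicit in its later construction of the elliptic fibrations.
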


\begin{proof}
Fixing $b_0$ amounts to fixing the support $\Sigma_{b_0}$ of $\mathcal{S}_{(\mathcal{E},\theta)}$. 
For generic choice of $b_0 \in \mathcal{B}$, the curve $\Sigma_{b_0}$ is smooth. 
The set of isomorphism classes of torsion free rank $1$ sheaves $\mathcal{S}$ of given degree on $\Sigma_{b_0}$ is then a torsor over the Jacobian $\operatorname{Jac}(\Sigma_{b_0})$. 


\end{proof}

From now on, fix some $b_0 \in \mathcal{B}$, and let us use the notation $\Sigma=\Sigma_{b_0}$. Consider the section
\[
s_1:=\chi(\zeta,\xi)\in H^0(\mathbb{F}_1,p^*K_{\mathbb{C}P^1}(D)^{\otimes 3}\otimes\mathcal{O}_{\mathbb{F}_1|\mathbb{C}P^1}(3)),
\]
and define the divisor of zeros $C_1:=(s_1)$, i.e. $C_1$ is the spectral curve $\Sigma$ as a Weil divisor. Define also the Weil divisor $C_2$, as $C_2:=3F+ 3 \sigma_{\infty}$ (in cases $*\in\{I,II,IVb\}$) or $C_2=F_1+2F_2+ 3 \sigma_{\infty}$ (in cases $*\in\{IVa,V,VI\}$). 
Here $F,F_1,F_2$ $(F_1\neq F_2)$ are generic fibers of the ruling $p:\mathbb{F}_1\rightarrow\mathbb{C}P^1$.
Specifically, because of our conventions, $F=F_2$ is the fiber of $p$ in $\mathbb{F}_1$ over $\infty \in \mathbb{C}P^1$, and $F_1$ is the fiber of $p$ in $\mathbb{F}_1$ over $0 \in \mathbb{C}P^1$. 

\begin{lemma}
    $C_1$ and $C_2$ are linearly equivalent. 
\end{lemma}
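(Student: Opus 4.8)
The plan is to prove the stronger statement that the line bundles $\mathcal{O}_{\mathbb{F}_1}(C_1)$ and $\mathcal{O}_{\mathbb{F}_1}(C_2)$ are isomorphic; on the smooth rational surface $\mathbb{F}_1$, where $\operatorname{Pic}^0(\mathbb{F}_1)=0$ and $\operatorname{Pic}(\mathbb{F}_1)\cong\mathbb{Z}[F]\oplus\mathbb{Z}[\sigma_{\infty}]$ is discrete, this is equivalent to linear equivalence of the two divisors. First I would record two elementary identifications of line bundles. Since $\xi\in H^0(\mathbb{F}_1,\mathcal{O}_{\mathbb{F}_1|\mathbb{C}P^1}(1))$ cuts out exactly the section at infinity, we have $\mathcal{O}_{\mathbb{F}_1}(\sigma_{\infty})\cong\mathcal{O}_{\mathbb{F}_1|\mathbb{C}P^1}(1)$, hence $\mathcal{O}_{\mathbb{F}_1|\mathbb{C}P^1}(3)\cong\mathcal{O}_{\mathbb{F}_1}(3\sigma_{\infty})$. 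Likewise, the pullback divisor $p^*D$ is by definition a sum of fibers of the ruling, so $\mathcal{O}_{\mathbb{F}_1}(p^*D)\cong p^*\mathcal{O}_{\mathbb{C}P^1}(D)$.

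Next I would compute the class of $C_1$. By definition $C_1=\Sigma$ is the zero divisor of the section $s_1=\chi_{\theta}(\zeta,\xi)$ of $p^*K_{\mathbb{C}P^1}(D)^{\otimes 3}\otimes\mathcal{O}_{\mathbb{F}_1|\mathbb{C}P^1}(3)$, so $\mathcal{O}_{\mathbb{F}_1}(C_1)$ is isomorphic to this line bundle. Here the key input is that $K_{\mathbb{C}P^1}(D)\cong\mathcal{O}_{\mathbb{C}P^1}(1)$ and $\deg D=3$, so that $K_{\mathbb{C}P^1}(D)^{\otimes 3}$ and $\mathcal{O}_{\mathbb{C}P^1}(D)$ are both of degree $3$ on $\mathbb{C}P^1$ and therefore isomorphic. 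Pulling back and combining with the previous paragraph yields
\[
\mathcal{O}_{\mathbb{F}_1}(C_1)\cong p^*K_{\mathbb{C}P^1}(D)^{\otimes 3}\otimes\mathcal{O}_{\mathbb{F}_1|\mathbb{C}P^1}(3)\cong\mathcal{O}_{\mathbb{F}_1}(p^*D+3\sigma_{\infty}).
\]

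Finally I would match $C_2$ with $p^*D+3\sigma_{\infty}$ directly from its definition, in each of the two families. When $*\in\{I,II,IVb\}$ we have $D=3\cdot\{\infty\}$, and since $F=F_2$ is the fiber over $\infty$, $p^*D=3F$, whence $C_2=3F+3\sigma_{\infty}=p^*D+3\sigma_{\infty}$. When $*\in\{IVa,V,VI\}$ we have $D=2\cdot\{\infty\}+\{0\}$, and with $F_2,F_1$ the fibers over $\infty,0$, we get $p^*D=2F_2+F_1$, whence $C_2=F_1+2F_2+3\sigma_{\infty}=p^*D+3\sigma_{\infty}$. In both cases $\mathcal{O}_{\mathbb{F}_1}(C_2)\cong\mathcal{O}_{\mathbb{F}_1}(p^*D+3\sigma_{\infty})\cong\mathcal{O}_{\mathbb{F}_1}(C_1)$, giving the claim.

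The argument is essentially bookkeeping, so I do not expect a genuine obstacle. The only delicate points are matching the projectivization convention so that $\mathcal{O}_{\mathbb{F}_1|\mathbb{C}P^1}(1)$ is correctly identified with $\mathcal{O}_{\mathbb{F}_1}(\sigma_{\infty})$ rather than with the class of the disjoint zero-section $\{\zeta=0\}$, and verifying that the multiplicities built into the case-by-case definition of $C_2$ reproduce $p^*D$ exactly. It is precisely this agreement that makes the two definitions of $C_2$ uniform and explains, conceptually, why $C_2=p^*D+3\sigma_{\infty}$ is a distinguished member of the linear system $\lvert p^*K_{\mathbb{C}P^1}(D)^{\otimes 3}\otimes\mathcal{O}_{\mathbb{F}_1|\mathbb{C}P^1}(3)\rvert$ in which the spectral curve $C_1$ moves.
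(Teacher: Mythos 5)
Your proof is correct, and it takes a genuinely different route from the paper's. The paper argues homologically: it uses the relation $3[\sigma_{\infty}]+3[F]=3[\sigma]$ for a section $\sigma$ of $p$, asserts that the triple section $C_1$ has class $[C_1]=[C_2]$ in $H_2(\mathbb{F}_1,\mathbb{Z})$, and then invokes the Griffiths--Harris fact that on a rational ruled surface homologous curves are linearly equivalent. You instead identify the associated line bundles directly: $\mathcal{O}_{\mathbb{F}_1}(C_1)\cong p^*K_{\mathbb{C}P^1}(D)^{\otimes 3}\otimes\mathcal{O}_{\mathbb{F}_1|\mathbb{C}P^1}(3)\cong\mathcal{O}_{\mathbb{F}_1}(p^*D+3\sigma_{\infty})=\mathcal{O}_{\mathbb{F}_1}(C_2)$, using $\deg D=3$, $K_{\mathbb{C}P^1}(D)\cong\mathcal{O}_{\mathbb{C}P^1}(1)$, and the case-by-case check that $C_2$ is exactly the divisor $p^*D+3\sigma_{\infty}$. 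Your approach buys two things. First, it bypasses the ``homologous implies linearly equivalent'' input entirely, since an isomorphism $\mathcal{O}_{\mathbb{F}_1}(C_1)\cong\mathcal{O}_{\mathbb{F}_1}(C_2)$ is equivalent to linear equivalence on any integral projective variety; for the same reason your opening remark about $\operatorname{Pic}^0(\mathbb{F}_1)=0$ is actually superfluous for your argument (it is needed only on the paper's homological route). Second, it makes rigorous a step the paper glosses over: being a triple section only pins down the class of $C_1$ up to a multiple of $[F]$ (a priori $[C_1]=3[\sigma_{\infty}]+k[F]$ for some $k$), and the fact that $k=3$ comes precisely from knowing which line bundle the characteristic polynomial $s_1$ is a section of --- which is exactly the input you use. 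The one point to keep explicit is that $\mathcal{O}_{\mathbb{F}_1}(\sigma_{\infty})\cong\mathcal{O}_{\mathbb{F}_1|\mathbb{C}P^1}(1)$ requires $\xi$ to vanish to first order along $\sigma_{\infty}$; this holds because $\xi$ restricts to a nonzero section of $\mathcal{O}_{\mathbb{C}P^1}(1)$ on each fiber of $p$, so its zero divisor is reduced.
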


\begin{proof}

We have $[C_1]=[C_2]\in H_2(\mathbb{F}_1,\mathbb{Z})$, because of the linear relation
\[
3[\sigma_{\infty}]+3[F]=3[\sigma]
\]
for some arbitrary $\sigma$ section of $p$, and obviously $[F_1]=[F_2]=[F]$. Since $C_1$ is a triple section, $C_1$ and $C_2$ are homologous; therefore, they are linearly equivalent. (It is proved in \cite[Chapter 4.3]{GH} that on a rational ruled surface, such as $\mathbb{F}_1$, two curves are linearly equivalent if and only if they are homologous.)
\end{proof}

As a consequence of this lemma, we have $C_2=(s_2)$ for some section 
\[
    s_2 \in H^0(\mathbb{F}_1,p^*K_{\mathbb{C}P^1}(D)^{\otimes 3}\otimes\mathcal{O}_{\mathbb{F}_1|\mathbb{C}P^1}(3)).
\]
The set  
\begin{equation}
    \label{pencil}
        \{(\lambda_1s_1+\lambda_2s_2)|[\lambda_1:\lambda_2]\in\mathbb{C}P^1\}
    \end{equation}
is a complete family of linearly equivalent divisors (i.e. a pencil), generated by $C_1$ and $C_2$.

\begin{defn}
    $Z^*$ is constructed as the sequence of 9 blow-ups (with possibly infinitesimally close ones) at the base points $C_1\cap C_2$ of \eqref{pencil}, and the blow-down of $\sigma_{\infty}$. See Sections~\ref{sec:DolVI}--\ref{sec:DolI} for the details in each case.
\end{defn}

The blow-down of $\sigma_{\infty}$ is included in order to get a minimal fibration. 
Denote the total transforms of $\Sigma$ and $F,F_1,F_2$ after this process by $\widetilde{\Sigma},\widetilde{F},\widetilde{F}_1,\widetilde{F}_2$, respectively.

\begin{defn}
    Let $F_{\infty}^{*}$ be the total transform of $C_2$ after the sequence of the blow-ups and the blow-down. In concrete terms, 
    \begin{equation}\label{eq:finf}
        F_{\infty}^{*}=3\widetilde{F}+\sum_{i=1}^k E_i \hspace{0.2cm}\left(\textrm{or}\hspace{0.2cm}\widetilde{F}_1+2\widetilde{F}_2+\sum_{i=1}^k E_i\right)
    \end{equation}
    where $E_i$'s are the exceptional divisors appearing after the blow-ups, and $k$ depends on the particular $*$-case (see Sections~\ref{sec:DolVI}--\ref{sec:DolI}).
\end{defn}

\begin{lemma}\label{lemma:selfint}
    $[F_{\infty}^{*}]\cap[F_{\infty}^{*}]=0$ in $H_2(Z^*,\mathbb{Z})$.
\end{lemma}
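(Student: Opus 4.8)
The plan is to exhibit $F_{\infty}^{*}$ as a fiber of the fibration that the pencil~\eqref{pencil} induces on $Z^{*}$, and then to read off its self-intersection from the elementary fact that the fibers of a morphism onto a curve are homologous and pairwise disjoint. The only genuine point is to control the effect of the final blow-down of $\sigma_{\infty}$, which I will handle with the standard blow-down formula for self-intersections.

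First I would resolve the base locus of the pencil on $\mathbb{F}_1$. Since $C_1=\Sigma$ is disjoint from $\sigma_{\infty}$, every base point lies in $C_1\cap C_2\subseteq\Sigma\cap(F_1\cup F_2)$ (resp. $\Sigma\cap F$), hence off $\sigma_{\infty}$; and $[C_1]\cap[C_2]=9$, so there are indeed nine base points when counted with the infinitely near ones recorded in Sections~\ref{sec:DolVI}--\ref{sec:DolI}. Let $\widehat{Z}$ denote the nine-fold blow-up of $\mathbb{F}_1$ at this base locus, so that the rational map attached to the pencil becomes a morphism $\widehat\pi\colon\widehat Z\to\CP1$. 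Its scheme-theoretic fibers are all homologous to a single class $f=\widehat\pi^{*}[\mathrm{pt}]$ with $f\cap f=0$, and for generic parameter they are the smooth spectral curves. Because the blow-ups take place away from $\sigma_{\infty}$, the strict transform $\sigma_{\infty}'\subset\widehat Z$ still satisfies $[\sigma_{\infty}']\cap[\sigma_{\infty}']=-1$, consistently with $[\sigma_{\infty}]\cap[\sigma_{\infty}]=-1$, so $\sigma_{\infty}'$ may be contracted.

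Next I would analyze the blow-down $\beta\colon\widehat Z\to Z^{*}$ of $\sigma_{\infty}'$. As $\sigma_{\infty}$ is a component of the pencil member $C_2$, the curve $\sigma_{\infty}'$ is contained in the single fiber of $\widehat\pi$ lying over the point of $\CP1$ corresponding to $C_2$; in particular $f\cap[\sigma_{\infty}']=0$. By construction $F_{\infty}^{*}=\beta_{*}\bigl(\widehat\pi^{-1}(\text{the $C_2$-point})\bigr)$, and since that fiber has class $f$ we get $[F_{\infty}^{*}]=\beta_{*}f$ in $H_2(Z^{*},\mathbb{Z})$; this is also why the $3\sigma_{\infty}$ summand of $C_2$ drops out of~\eqref{eq:finf}, the contracted curve having trivial push-forward. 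Applying the blow-down identity $(\beta_{*}\alpha)\cap(\beta_{*}\alpha)=\alpha\cap\alpha+(\alpha\cap[\sigma_{\infty}'])^2$ to $\alpha=f$ gives
\[
   [F_{\infty}^{*}]\cap[F_{\infty}^{*}]=f\cap f+(f\cap[\sigma_{\infty}'])^2=0+0=0,
\]
which is the assertion. Equivalently and more conceptually, $F_{\infty}^{*}$ is a fiber of the descended morphism $\pi\colon Z^{*}\to\CP1$, hence homologous to and disjoint from any generic smooth fiber, forcing $[F_{\infty}^{*}]\cap[F_{\infty}^{*}]=0$.

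The step I expect to be the main obstacle is the descent through the contraction of $\sigma_{\infty}$: one must be sure that contracting this curve neither breaks the fibration nor alters the self-intersection of the fiber. Both facts rest on the single observation that $\sigma_{\infty}$ is a component of the member $C_2$ and so lies entirely inside one fiber; this is exactly what makes the correction term $(f\cap[\sigma_{\infty}'])^2$ vanish and what lets $\widehat\pi$ factor through $\beta$.
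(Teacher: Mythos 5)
Your proof is correct, but it reaches the conclusion by a genuinely different mechanism than the paper. The paper's proof is a direct arithmetic computation: it evaluates $[C_2]\cap[C_2]=(3[F]+3[\sigma_{\infty}])\cap(3[F]+3[\sigma_{\infty}])=9(0+2-1)=9$ on $\mathbb{F}_1$, then observes that each of the nine blow-ups at (possibly infinitely near) base points lowers this number by one while the final blow-down of $\sigma_{\infty}$ leaves it unchanged, giving $9-9=0$. You instead realize $F_{\infty}^{*}$ as a fiber: resolving the base locus turns the pencil~\eqref{pencil} into a morphism $\widehat\pi\colon\widehat Z\to\CP1$ whose fiber class $f$ automatically satisfies $f\cap f=0$, and you then transport this through the contraction $\beta$ of $\sigma_{\infty}'$ via the push-forward identity $(\beta_{*}\alpha)\cap(\beta_{*}\alpha)=\alpha\cap\alpha+(\alpha\cap[\sigma_{\infty}'])^2$, whose correction term vanishes precisely because $\sigma_{\infty}'$ is a component of the $C_2$-fiber, hence vertical. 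Your route buys two things: it explains conceptually why the answer must be zero, and it supplies a justification for the step the paper merely asserts --- that the blow-down does not change the self-intersection --- by exhibiting the vanishing of $(f\cap[\sigma_{\infty}'])^2$; it also proves en route a piece of the subsequent Proposition (that the pencil induces a fibration on $Z^{*}$ with $F_{\infty}^{*}$ as a fiber), which the paper establishes only afterwards, via Lemma~\ref{lem:pencil}, using the present lemma as input --- so your argument is not circular with respect to the paper's logical order, but it does front-load that geometric content. What the paper's computation buys in exchange is economy and self-containedness: it requires neither the fact that the nine blow-ups render the pencil base-point free, nor the factorization of $\widehat\pi$ through the contraction, only the intersection form of $\mathbb{F}_1$ and the standard behaviour of self-intersections under blow-up.
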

\begin{proof}
    \begin{align*}
        [C_2]\cap[C_2] & =(3[F]+3[\sigma_{\infty}])\cap(3[F]+3[\sigma_{\infty}]) \\
        & = 9([F]\cap[F]+2[F]\cap[\sigma_{\infty}]+[\sigma_{\infty}]\cap[\sigma_{\infty}]) \\
        & = 9(0+2-1)=9.
    \end{align*}
    Each and every one of the nine blow-ups reduces this self-intersection number by one, whereas the blow-down of $\sigma_{\infty}$ does not change it. In the end, the total transform of $C_2$ has self-intersection 0.
\end{proof}

\begin{prop}
    There exist elliptic fibrations $Z^*\rightarrow \mathbb{C}P^1$ with fiber $F_{\infty}^{*}$ over $\infty\in\mathbb{C}P^1$, whose complement is biregular to the moduli space $\mathcal{M}_{Dol}^{JKT*}$.
\end{prop}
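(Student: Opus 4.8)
The plan is to realize the elliptic fibration as the resolution of the rational map determined by the pencil~\eqref{pencil}, and then to identify its complement with the moduli space via the Beauville--Narasimhan--Ramanan correspondence of Theorem~\ref{thm:BNR}.

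First I would observe that the pencil~\eqref{pencil} generated by $C_1$ and $C_2$ defines a rational map $\mathbb{F}_1 \dashrightarrow \mathbb{C}P^1$, $x \mapsto [s_1(x):s_2(x)]$, whose indeterminacy locus is exactly the base scheme $C_1 \cap C_2$. By the computation $[C_1]\cap[C_2] = [C_2]\cap[C_2] = 9$ underlying Lemma~\ref{lemma:selfint}, this base scheme has length $9$, so resolving the indeterminacy requires precisely the sequence of $9$ blow-ups (including the infinitely near ones) used to define $Z^*$; the local structure of the base points on the fibers over $0$ and $\infty$ is worked out case-by-case in Sections~\ref{sec:DolVI}--\ref{sec:DolI}. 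After these blow-ups the rational map becomes a genuine morphism to $\mathbb{C}P^1$, whose fibers are the (total) transforms of the members of the pencil; by Lemma~\ref{lemma:selfint} they all have self-intersection $0$, as fibers of a fibration must.

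Next I would verify that this fibration is elliptic and locate its special fiber. The generic member of the pencil is a generic spectral curve $\Sigma_{b_0}$, which has arithmetic genus $1$: since $\mathcal{M}_{Dol}^{JKT*}$ is $2$-complex-dimensional and the Hitchin base is $1$-dimensional, the generic Hitchin fiber, a torsor over $\operatorname{Jac}(\Sigma_{b_0})$, is $1$-dimensional, forcing $g(\Sigma_{b_0}) = 1$. Hence the generic fiber of $Z^* \to \mathbb{C}P^1$ is an elliptic curve. The fiber over $[0:1]=\infty$ is the member $C_2$, whose total transform is by definition $F_\infty^{*}$ (equation~\eqref{eq:finf}). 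Because the base points all lie on $C_1 = \Sigma$, which is disjoint from $\sigma_{\infty}$, none of the $9$ blow-ups touches $\sigma_{\infty}$; it therefore remains a $(-1)$-curve sitting inside the reducible fiber $F_\infty^{*}$, and its blow-down --- already built into the definition of $Z^*$ --- produces a relatively minimal elliptic fibration $Z^* \to \mathbb{C}P^1$ still having $F_\infty^{*}$ as its fiber over $\infty$.

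Finally I would identify the complement $Z^* \setminus F_\infty^{*}$ with $\mathcal{M}_{Dol}^{JKT*}$. Over the affine base $\mathbb{C} = \mathbb{C}P^1 \setminus \{\infty\}$ the fibration restricts to the family of transforms $\widetilde{\Sigma}_{b_0}$ of the spectral curves, parameterized by the Hitchin base; on the other hand, by Theorem~\ref{thm:BNR}, $\mathcal{M}_{Dol}^{JKT*}$ is the relative moduli space of rank-$1$ torsion-free sheaves of degree $d+3$ on this same family, i.e. its relative compactified Jacobian. Since each fiber has arithmetic genus $1$, its compactified Jacobian is isomorphic to the curve itself, and I would promote this fiberwise statement to a biregular isomorphism of the two families over $\mathbb{C}$, compatible with the Hitchin map and the elliptic fibration. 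The main obstacle is exactly this last identification: one must make the isomorphism between the relative compactified Jacobian and the spectral family hold uniformly over \emph{all} of $\mathbb{C}$, in particular over the finitely many points where $\Sigma_{b_0}$ degenerates to a singular genus-$1$ curve, and must check that the degree shift $d \mapsto d+3$ and the torsor structure are accounted for consistently. This is precisely what the BNR correspondence of Theorem~\ref{thm:BNR} furnishes, so the remaining task reduces to confirming its hypotheses in each of the six cases, as carried out in Sections~\ref{sec:DolVI}--\ref{sec:DolI}.
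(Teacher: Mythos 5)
Your proposal is correct, and its skeleton is the paper's: the fibration is the pencil spanned by $C_1$ and $C_2$, resolved by the nine blow-ups and followed by the contraction of $\sigma_{\infty}$, with the complement matched to the moduli space through the spectral correspondence. The execution of the middle step, however, is genuinely different. You resolve the indeterminacy of the rational map $[s_1:s_2]\colon \mathbb{F}_1 \dashrightarrow \mathbb{C}P^1$ directly, so the morphism to $\mathbb{C}P^1$ and its fibers come for free from the resolution; the paper instead works intrinsically on $Z^*$ with the \emph{complete} linear system $|F_{\infty}^{*}|$: membership is characterized by the ten intersection-number equations~\eqref{eq:fibration_equations} in the non-degenerate lattice $H_2(Z^*,\mathbb{Z})\cong\mathbb{Z}^{10}$, homologous curves on the rational surface $Z^*$ are linearly equivalent, $\widetilde{\Sigma}$ is checked against the equations, and Lemma~\ref{lem:pencil} (namely $H^0(Z^*,\mathcal{O}_{Z^*}(F_{\infty}^{*}))\cong\mathbb{C}^2$, proved from the restriction sequence together with Lemma~\ref{lemma:selfint} and $H^1(Z^*,\mathcal{O}_{Z^*})=0$) shows the system is exactly a pencil. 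The heavier intrinsic route buys two things your argument does not: completeness of the linear system (your resolution alone does not rule out $\dim |F_{\infty}^{*}|>1$ without an additional connectedness-of-fibers argument), which is reused later when the paper needs $H^0(Z^*,K_{Z^*}^{-1})\cong\mathbb{C}^2$ in the Mukai moduli section; and the characterization~\eqref{eq:fibration_equations} itself, which reappears in the proof of Theorem~\ref{thm:BNR}. Conversely, your route is more elementary and makes explicit two points the paper leaves implicit: the genus-one count for the generic fiber and the relative minimality after contracting $\sigma_{\infty}$. On the final identification of the complement you are, if anything, more careful than the paper, which disposes of it in one sentence (``Abel--Jacobi dual to the Hitchin fibration''); your relative compactified Jacobian argument is the intended content, the uniformity over the singular spectral curves that you flag is precisely what both texts defer to Theorem~\ref{thm:BNR}, and you could close the remaining loop by observing that the leaf exceptional divisors $s_i$ are sections of the fibration, furnishing the base point needed to identify the relative degree-$(d+3)$ compactified Jacobian with $Z^*\setminus F_{\infty}^{*}$ itself.
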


\begin{remark}
    The fibration given by the proposition is not to be confused with the ruling $p$. 
    It gives the Hitchin fibration, up to applying the relative Abel--Jacobi map. 
\end{remark}

\begin{proof}
Consider the set $|F_{\infty}^{*}|$ of linearly equivalent effective divisors (or linear system) generated by $F_{\infty}^{*}\subset Z^*$. 
An arbitrary projective curve $C\subset Z^*$ belongs to $|F_{\infty}^{*}|$ if and only if $C$ satisfies the following system of equations.

\begin{equation}\label{eq:fibration_equations}
    \begin{cases}
      [C]\cap[\widetilde{F}]=0, \\
      [C]\cap[E_i]=0,\hspace{0.2cm}1\leq i\leq k \\
      [C]\cap[s_i]=1, \hspace{0.2cm}1\leq i\leq l
    \end{cases}
\end{equation}
where $E_i,s_i$ are the exceptional divisors and the sections coming from the blow-ups. 
By sections $s_i$, we mean the exceptional divisors corresponding to the leaves of the resolution tree; they are indeed sections of the fibration.  
The remaining exceptional divisors are labelled by $E_i$.
Their numbers $k,l$ depend on the particular cases, but we always have $k+l=9$ (see Sections~\ref{sec:DolVI}--\ref{sec:DolI}).

One can easily check that $C=F_{\infty}^{*}$ satisfies this system of equations. 
On the other hand $H_2(Z^*,\mathbb{Z})\cong\mathbb{Z}^{10}$, and is endowed with a  non-degenerate intersection lattice, generated by $E_1,...,E_k,s_1,...,s_l,\widetilde{F}$. 
Therefore, the above 10 equations uniquely determine the homology class $[F_{\infty}^{*}]$ in $H_2(Z^*,\mathbb{Z})$. Now $C$ satisfies the system of equations if and only if $[C]=[F_{\infty}^{*}]$, which means that $C$ and $F_{\infty}^*$ are linearly equivalent. 
(Recall again that two curves on the rational surface $Z^*$ are linearly equivalent if and only if they are homologous, as it follows from \cite[Chapter 4.3]{GH}.)

\begin{lemma}\label{lem:pencil}
We have 
\begin{equation*}
H^0(Z^*,\mathcal{O}_{Z^*}(F_{\infty}^{*}))\cong\mathbb{C}^2,
\end{equation*}
thus $\operatorname{dim}|F_{\infty}^{*}|=1$, i.e. $|F_{\infty}^{*}|$ is a pencil. 
\end{lemma}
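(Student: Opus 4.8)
The plan is to compute $\dim H^0(Z^*, \mathcal{O}_{Z^*}(F_\infty^*))$ via Riemann--Roch on the surface $Z^*$, using the fact that $Z^*$ is a rational elliptic surface and that $F_\infty^*$ is a fiber of the elliptic fibration. The key geometric input already established is Lemma~\ref{lemma:selfint}, namely $[F_\infty^*]\cap[F_\infty^*]=0$, together with the identification $\mathcal{O}_{Z^*}(F_\infty^*)=K_{Z^*}^{-1}$ asserted after Theorem~\ref{thm:BNR}. First I would recall Riemann--Roch for a divisor $L$ on a smooth projective surface:
\[
\chi(\mathcal{O}_{Z^*}(L)) = \chi(\mathcal{O}_{Z^*}) + \tfrac{1}{2}\bigl(L\cap L - L\cap K_{Z^*}\bigr).
\]
Since $Z^*$ is a rational surface we have $\chi(\mathcal{O}_{Z^*})=1$, and since $F_\infty^*$ is an anticanonical divisor we have $K_{Z^*}=-F_\infty^*$, hence $F_\infty^*\cap K_{Z^*}=-[F_\infty^*]\cap[F_\infty^*]=0$. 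Combined with $[F_\infty^*]\cap[F_\infty^*]=0$ from Lemma~\ref{lemma:selfint}, this yields $\chi(\mathcal{O}_{Z^*}(F_\infty^*))=1+\tfrac{1}{2}(0-0)=1$.

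Next I would pass from $\chi$ to $h^0$ by controlling the higher cohomology. We have $h^2(\mathcal{O}_{Z^*}(F_\infty^*))=h^0(\mathcal{O}_{Z^*}(K_{Z^*}-F_\infty^*))$ by Serre duality, and $K_{Z^*}-F_\infty^*=-2F_\infty^*$ has negative intersection with the nef class $F_\infty^*$ (indeed with the movable fiber class), so it carries no sections: $h^2=0$. For $h^1$, I would argue that $h^1(\mathcal{O}_{Z^*}(F_\infty^*))=0$: one way is to use that $F_\infty^*=-K_{Z^*}$ is nef on the rational elliptic surface and apply Kawamata--Viehweg vanishing to $K_{Z^*}+(-K_{Z^*})\cdot 2$-type twists, or more elementarily to invoke the standard structure of the relatively minimal rational elliptic surface where the generic fiber $f$ satisfies $h^1(\mathcal{O}(f))=0$ and $h^0(\mathcal{O}(f))=2$ directly. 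Putting these together gives $h^0(\mathcal{O}_{Z^*}(F_\infty^*))=\chi+h^1-h^2=1+0-0$, which would need to be reconciled with the claimed value $2$.

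\textbf{The main subtlety}, and where I expect the real obstacle to lie, is precisely this arithmetic: a naive Riemann--Roch count gives $\chi=1$, whereas a genuine elliptic pencil requires $h^0=2$. The resolution is that $F_\infty^*$ is a \emph{non-reduced} (multiple or otherwise singular) fiber, so the relevant vanishing is more delicate, and one must be careful that $-K_{Z^*}=F_\infty^*$ forces $h^1(\mathcal{O}_{Z^*})=0$ to feed back correctly; alternatively the correct statement is that $h^0(\mathcal{O}_{Z^*}(F_\infty^*))=2$ because $F_\infty^*$ moves in a genuine pencil whose members are the other fibers, and the extra section comes from the canonical section of the elliptic fibration structure. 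The cleanest route, which I would adopt, is to bypass the delicate $h^1$ estimate entirely: since we have \emph{already exhibited}, in the proof preceding this lemma, that the linear system $|F_\infty^*|$ is cut out by the system of intersection equations~\eqref{eq:fibration_equations} and contains at least the pencil generated by the total transforms of $C_1$ and $C_2$, we know $h^0\geq 2$; then the Riemann--Roch computation together with the vanishing $h^2=0$ forces $h^1=h^0-1\geq 1$ to be impossible unless one re-examines $\chi$, so I would instead directly establish $h^0=2$ by identifying $|F_\infty^*|$ with the proper transform of the pencil~\eqref{pencil} on $\mathbb{F}_1$, whose dimension is manifestly $1$ since it is generated by the two sections $s_1,s_2$ and has no further members (any additional section would produce a curve meeting every fiber $\widetilde{F}$ in degree $0$ yet not proportional to $F_\infty^*$, contradicting the non-degeneracy of the intersection lattice on $H_2(Z^*,\mathbb{Z})\cong\mathbb{Z}^{10}$ used above). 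This last argument, tying $h^0(\mathcal{O}_{Z^*}(F_\infty^*))$ back to the two-dimensional space spanned by $s_1,s_2$ through the birational map $\omega$, is the step that carries the actual content.
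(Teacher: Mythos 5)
Your Riemann--Roch bookkeeping is correct ($\chi(\mathcal{O}_{Z^*}(F_{\infty}^{*}))=1$), and you rightly sensed the tension with $h^0=2$; but both of your attempts to resolve that tension fail. First, the claimed vanishing $h^1(\mathcal{O}_{Z^*}(F_{\infty}^{*}))=0$ is false: for the fiber class $f=-K_{Z^*}$ on a rational elliptic surface one has $h^0(\mathcal{O}(f))=2$, $h^1(\mathcal{O}(f))=1$, $h^2(\mathcal{O}(f))=0$, which is precisely how $\chi=1$ coexists with $h^0=2$. Kawamata--Viehweg does not apply here because $-K_{Z^*}=F_{\infty}^{*}$ is nef but not big (its self-intersection vanishes by Lemma~\ref{lemma:selfint}), and the nonvanishing $h^1=1$ shows that no such vanishing theorem can hold; so there is no ``impossibility'' to re-examine --- the situation $h^1=h^0-1=1$ is exactly what occurs. (Your $h^2$ argument also has a slip: $(K_{Z^*}-F_{\infty}^{*})\cap [F_{\infty}^{*}]=-2\,[F_{\infty}^{*}]\cap[F_{\infty}^{*}]=0$, not negative; $h^2=0$ holds rather because $-2F_{\infty}^{*}$ is the negative of a nonzero effective divisor and so has no sections.) Second, your fallback upper bound --- that a third section would ``contradict the non-degeneracy of the intersection lattice'' --- is not an argument: every member of $|F_{\infty}^{*}|$ lies in the same homology class by definition, and the homology class places no bound on the dimension of a linear system (all lines in $\mathbb{P}^2$ are homologous, yet $\dim|\mathcal{O}_{\mathbb{P}^2}(1)|=2$). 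So your proposal establishes $h^0\geq 2$ but never a valid inequality $h^0\leq 2$, which is where the content of the lemma lies.

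The paper obtains both bounds at once from the restriction exact sequence, which is the missing idea: twisting $0\to\mathcal{O}_{Z^*}\to\mathcal{O}_{Z^*}(F_{\infty}^{*})\to\mathcal{O}_{F_{\infty}^{*}}(F_{\infty}^{*})\to 0$ and taking cohomology gives
\[
0\to H^0(\mathcal{O}_{Z^*})\to H^0(\mathcal{O}_{Z^*}(F_{\infty}^{*}))\to H^0(\mathcal{O}_{F_{\infty}^{*}}(F_{\infty}^{*}))\to H^1(\mathcal{O}_{Z^*})=0,
\]
the last vanishing being rationality of $Z^*$. By Lemma~\ref{lemma:selfint} the restriction $\mathcal{O}_{F_{\infty}^{*}}(F_{\infty}^{*})$ is trivial, so $H^0(\mathcal{O}_{F_{\infty}^{*}}(F_{\infty}^{*}))\cong H^0(F_{\infty}^{*},\mathcal{O}_{F_{\infty}^{*}})\cong\mathbb{C}$, whence $h^0(\mathcal{O}_{Z^*}(F_{\infty}^{*}))=1+1=2$ exactly. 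The same sequence, continued one step further, computes the group you tried to kill: $H^1(\mathcal{O}_{Z^*}(F_{\infty}^{*}))\cong H^1(\mathcal{O}_{F_{\infty}^{*}})\cong\mathbb{C}$, reflecting the arithmetic genus one of the fiber.
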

\begin{proof}
Consider the following short exact sequence of sheaves on $Z^*$:
\begin{equation*}
    0 \to \mathcal{O}_{Z^*} \to \mathcal{O}_{Z^*}(F_{\infty}^{*}) \to \mathcal{O}_{F_{\infty}^{*}}(F_{\infty}^{*}) \to 0.
\end{equation*} 
The corresponding long exact sequence in cohomology is
\begin{equation*}
    0 \to H^0(Z^*,\mathcal{O}_{Z^*}) \to H^0(Z^*,\mathcal{O}_{Z^*}(F_{\infty}^{*})) \to H^0(Z^*,\mathcal{O}_{F_{\infty}^{*}}(F_{\infty}^{*})) \to H^1(Z^*,\mathcal{O}_{Z^*})= 0.
\end{equation*} 
Because of $[F_{\infty}^{*}]\cap[F_{\infty}^{*}]=0$ (see Lemma~\ref{lemma:selfint}), we have 
\[
H^0(Z^*,\mathcal{O}_{F_{\infty}^{*}}(F_{\infty}^{*}))=H^0(Z^*,\mathcal{O}_{F_{\infty}^{*}})=H^0(F_{\infty}^*,\mathcal{O}_{F_{\infty}^{*}})\cong\mathbb{C}.
\]
The desired isomorphism now follows from the cohomology long exact sequence.
\end{proof}

One can also check that $C=\widetilde{\Sigma}$ satisfies the above system of equations, hence $[\widetilde{\Sigma}]=[F_{\infty}^{*}]\in H_2(Z^*,\mathbb{Z})$, and $\widetilde{\Sigma}\in|F_{\infty}^{*}|$. 
That is, the total transforms of $C_1$ and $C_2$ are in the same pencil. The self-intersection of the elements of this pencil is zero, therefore $Z^*\rightarrow |F_{\infty}^{*}|\cong\mathbb{C}P^1$ is an elliptic fibration. Moreover
\[
h':Z^*\setminus F_{\infty}^*\rightarrow\mathbb{C}P^1\setminus\infty
\]
is indeed Abel--Jacobi dual to the Hitchin fibration.

\end{proof}

In the sequel, we visualize the pencil arrangement \eqref{pencil}, and the resulting picture with $F_{\infty}^{*}$ in each case. In addition, we will see the geometric significance of the parameters appearing in the local forms \eqref{polar1}-\eqref{polar6}. 
For this purpose, as mentioned earlier, we always assume that the irregular singularity is located at $\infty\in\mathbb{C}P^1$.

\begin{rmrk}
    On Figures \ref{fig:fig4}-\ref{fig:fig3} we use the following notations. The curve $C_1$ is denoted by blue and $C_2$ by red. Their intersections are the basepoints of the pencils, denoted by black dots. The grey curves are exceptional divisors that do not appear in any of the pencils; they are sections. The parenthetical numbers next to the curves are the multiplicities and are considered to be 1 wherever missing. The other numbers are the self-intersection number (labeled only on the red curves). For a curve $X$, its n-th proper transform is denoted by $X^{(n)}$.
\end{rmrk}

\subsection{The logarithmic case}

For the sake of completeness, we mention this case as well, although it makes no particular interest in this work. The Dolbeault space was described in \cite{ESz}, \cite{Mih}. The singular curve at infinity of the Hitchin fibration is of type $IV$, and unlike the irregular cases, this one does not connect to any of the Painlev\'e systems. 

\subsection{Construction of $Z^{VI}$}\label{sec:DolVI}

Here we have $C_1$ a triple section of $p$ and $C_2=F_1+2F_2+3\sigma_{\infty}$. 
(For simplicity, on Figure \ref{fig:fig4} we represent the case when $C_1=L_1+L_2+L_3$ is the union of three sections of $p$ but the argument carries over verbatim to the general case.) 
There are six base points, and in all of them the curves meet transversely. We apply three blow-ups on $F_1$, and two infinitely close blow-ups at each points $a_0,a_1,a_2$ on $F_2$. Finally, we blow-down the $(-1)$-curve $\sigma_{\infty}$. It is easy to see that the fiber $F_{\infty}^{VI}$ at infinity comes from the red curve and is of type $I_0^*$. Indeed, after the process, each component of the red curve is a $(-2)$-curve (which is true in all cases). See Figure \ref{fig:fig4}.

\begin{figure}[ht]
\centering
\includegraphics[width=14.0cm]{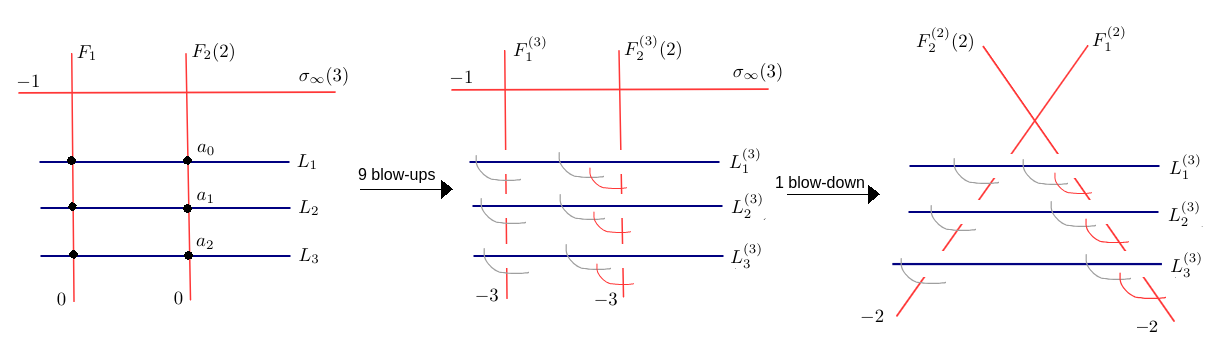}
\caption{Pencil arrangement for $JKTVI$ case.}
\label{fig:fig4}
\end{figure}

\subsection{Construction of $Z^{V}$}\label{sec:DolV}

In this case $C_1$ is again a triple section of $p$, tangent to $F_2$ at $a_0$ to second order. (On Figure \ref{fig:fig5} we represent the case when $C_1=Q+L_1$ is the union of a double section $Q$ of $p$ and a section $L_1$ of $p$, again for simplicity. 
As the notation suggests, the proper trasform of $Q$ in $\mathbb{C}P^2$ is a quadric.) 
$C_2=F_1+2F_2+3\sigma_{\infty}$, and we have 5 base points. Apply three blow-ups at the points of $F_1\cap(Q+L_1)$, while two infinitely close blow-ups at $a_1=F_2\cap L_1$ (in these base points, the curves meet transversely). At $a_0$, we need to apply four infinitely close blow-ups, until we get a section. Finally, we blow-down the $(-1)$-curve $\sigma_{\infty}$. 
The fiber $F_{\infty}^{V}$ comes from the red curve and is of type $I_1^*$. 
See Figure \ref{fig:fig5}.  

\begin{figure}[ht]
\centering
\includegraphics[width=14.0cm]{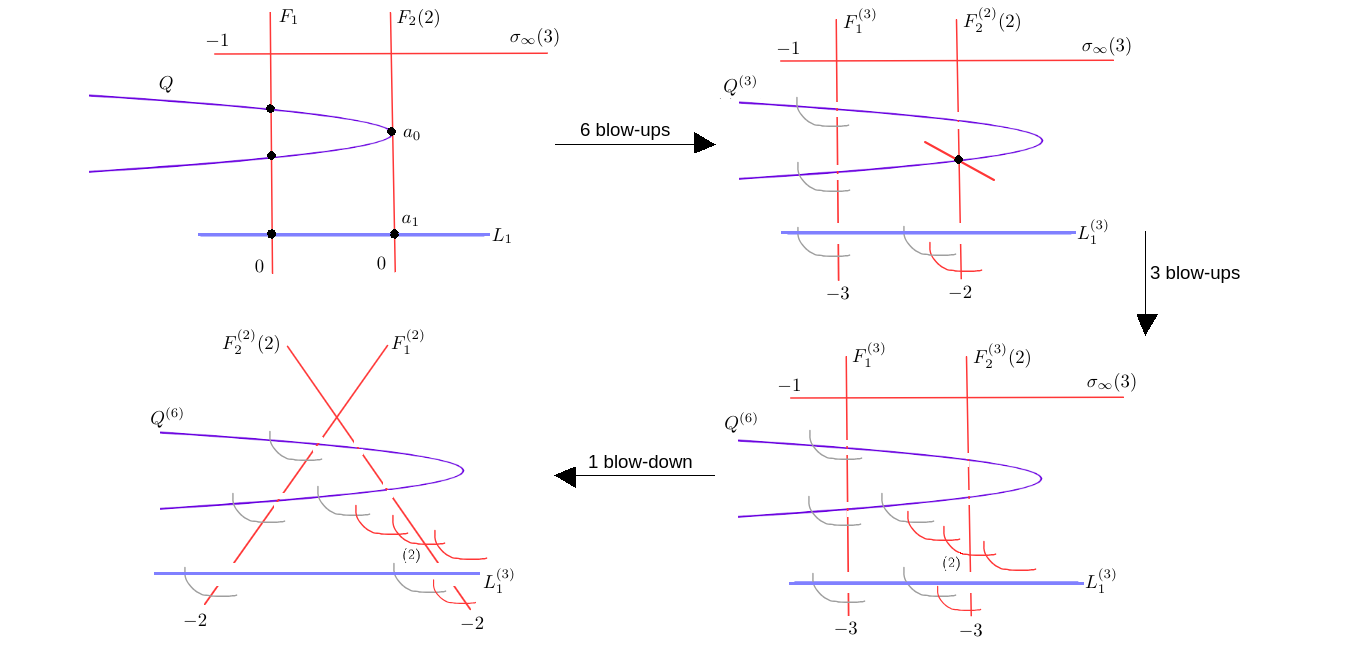}
\caption{Pencil arrangement for $JKTV$ case.}
\label{fig:fig5}
\end{figure}

\subsection{Construction of $Z^{IVa}$}\label{sec:DolIVa}

Now, $C_1=C$ is a triple section of $p$, tangent to $F_2$ at $a_0$ to third order. $C_2=F_1+2F_2+3\sigma_{\infty}$, and we have 4 base points. We need three blow-ups at the points of $F_1\cap C$, where these two meet transversely. At $a_0$, after three infinitesimally close blow-ups, we have three more exceptional divisors joining to $C_2$, and we have a new base point, where the exceptional divisor with multiplicity 3, meets $C$ (top right in Figure \ref{fig:fig6}). Here after three more blow-ups, we get a section, and finally we blow-down $\sigma_{\infty}$. After the final step, one can see that the fiber at infinity (coming from the red curve) $F_{\infty}^{IVa}$ is of type $\widetilde{E}_6$ (bottom left in Figure \ref{fig:fig6}).

\begin{figure}[ht]
\centering
\includegraphics[width=12.0cm]{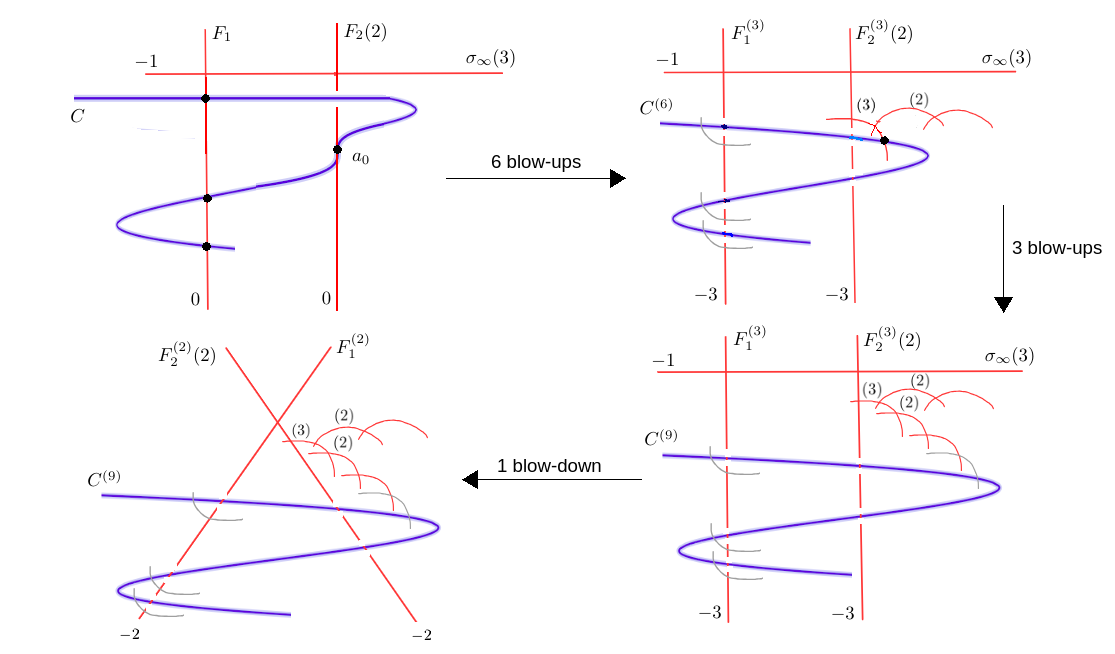}
\caption{Pencil arrangement for $JKTIVa$ case.}
\label{fig:fig6}
\end{figure}

\subsection{Construction of $Z^{IVb}$}

In this case $C_1$ is as in Section~\ref{sec:DolVI}, while $C_2=3F+3\sigma_{\infty}$, where the $F$ fiber of the ruling is counted with multiplicity three. The points $F\cap L_i=a_{i-1}$ ($i=1,2,3$) are the base points, where the curves meet transversely. We apply three infinitesimally close blow-ups in each of them. The result is visualized in Figure \ref{fig:fig1}, and if we blow-down the $(-1)$-curve $\sigma_{\infty}$ after all that, the red curve becomes the $F_{\infty}^{IVb}$ fiber at infinity. All its components are $(-2)$-curves, that is $F_{\infty}^{IVb}$ is of type $\widetilde{E}_6$.  

\begin{figure}[ht]
\centering
\includegraphics[width=12.0cm]{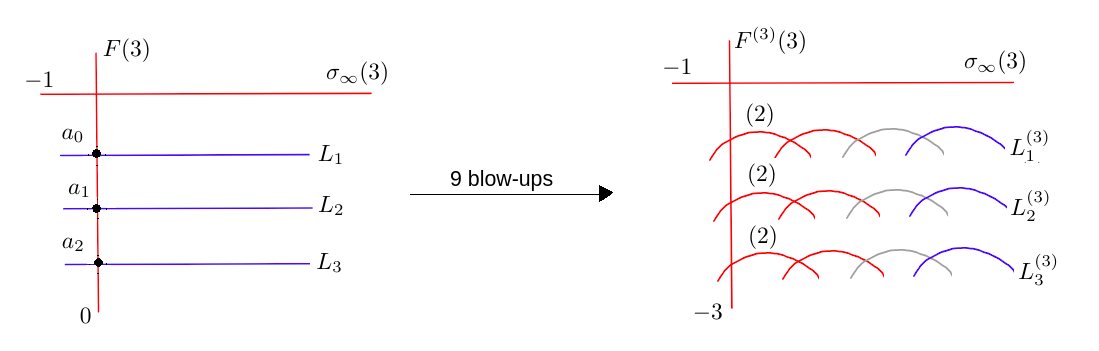}
\caption{Pencil arrangement for $JKTIVb$ case.}
\label{fig:fig1}
\end{figure}

\subsection{Construction of $Z^{II}$}

Here we have $C_1$ as in Section~\ref{sec:DolV}, and $C_2=3F+3\sigma_{\infty}$. At $a_1=F\cap L_1$, where the two curves are transverse, we apply three blow-ups and get a section. At $a_0$, the curves are doubly tangent to each other. Here after two blow-ups, we receive two exceptional divisors joining to $C_2$, and a new base point (bottom right in Figure \ref{fig:fig2}). After four more infinitesimally close blow-ups at this base point, we obtain another section, and after the blow-down of $\sigma_{\infty}$, the red curve turns to one of type $\widetilde{E}_7$, this is $F_{\infty}^{II}$.

\begin{figure}[ht]
\centering
\includegraphics[width=12.0cm]{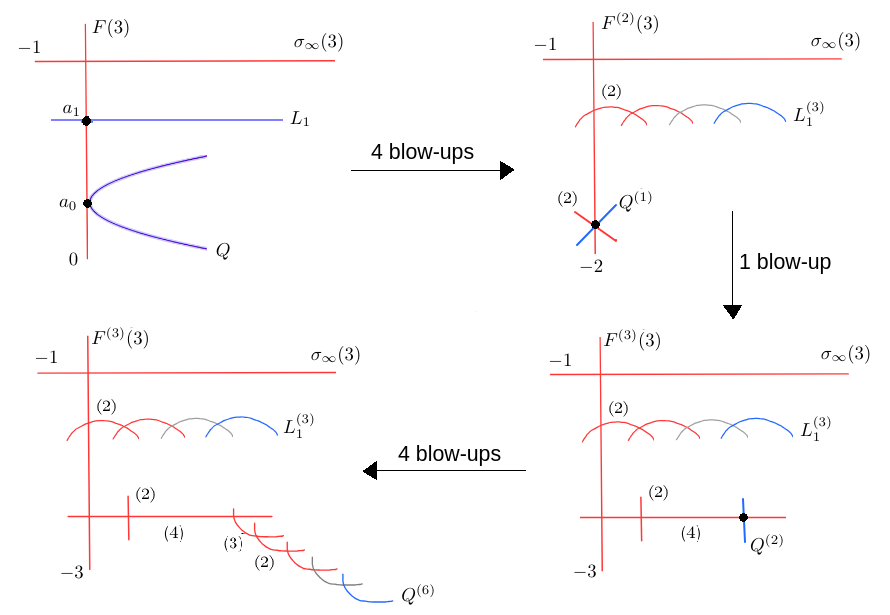}
\caption{Pencil arrangement for $JKTII$ case.}
\label{fig:fig2}
\end{figure}

\subsection{Construction of $Z^{I}$}\label{sec:DolI}

Now $C_1$ is as in Section~\ref{sec:DolV}, and $C_2=3F+3\sigma_{\infty}$. Their only intersection point is $a_0$, where they are triply tangent to each other. Apply here three infinitesimally close blow-ups, and we get three exceptional divisors joining to $C_2$, with multiplicities $2,4$ and $6$ (bottom right in Figure \ref{fig:fig3}). After that we have a new base point, where $C^{(3)}$ intersects the third exceptional divisor. We need 6 more blow-ups here to get a section. Finally, after the blow-down of $\sigma_{\infty}$, the fiber at infinity $F_{\infty}^{I}$ turns out to be of type $\widetilde{E}_8$. 

\begin{figure}[ht]
\centering
\includegraphics[width=12.0cm]{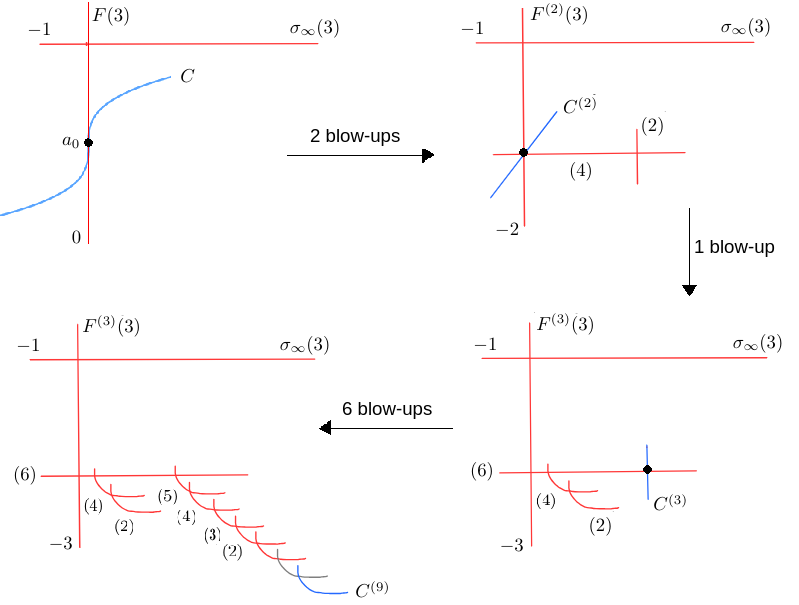}
\caption{Pencil arrangement for $JKTI$ case.}
\label{fig:fig3}
\end{figure}

\begin{prop}\label{prop:finf}
    We have $[F_{\infty}^{*}] = - [K_{Z^*}]$.
\end{prop}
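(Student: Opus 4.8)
The plan is to prove the identity by showing that $-K_{Z^*}$ satisfies \emph{exactly} the system of intersection equations \eqref{eq:fibration_equations} which, as established in the proof of the preceding proposition, uniquely determines the class $[F_{\infty}^{*}]$ in the non-degenerate lattice $H_2(Z^*,\mathbb{Z}) \cong \mathbb{Z}^{10}$ spanned by $E_1,\dots,E_k,s_1,\dots,s_l,\widetilde{F}$. Concretely, it suffices to verify the ten conditions $-K_{Z^*}\cap[\widetilde{F}]=0$, $\;-K_{Z^*}\cap[E_i]=0$ for $1\le i\le k$, and $-K_{Z^*}\cap[s_j]=1$ for $1\le j\le l$; since $[F_{\infty}^{*}]$ is already known to satisfy this system, uniqueness of the solution then forces $[F_{\infty}^{*}]=-[K_{Z^*}]$.

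Each of these identities I would extract from the adjunction formula $-K_{Z^*}\cap[C]=[C]\cap[C]+2-2p_a(C)$ combined with the self-intersection and genus data recorded in the case-by-case constructions of Sections~\ref{sec:DolVI}--\ref{sec:DolI}. First, every component of $F_{\infty}^{*}$ appearing in \eqref{eq:finf}---that is, $\widetilde{F}$ (or $\widetilde{F}_1,\widetilde{F}_2$) and the exceptional curves $E_i$---is a smooth rational $(-2)$-curve, so $p_a=0$ and $[C]\cap[C]=-2$, giving $-K_{Z^*}\cap[C]=-2+2=0$. Next, the curves $s_j$ are the leaf exceptional divisors of the resolution, hence smooth rational $(-1)$-curves which serve as sections of the elliptic fibration; they remain $(-1)$-curves after the blow-down of $\sigma_{\infty}$ since they are disjoint from it (the base points lie away from $\sigma_{\infty}$ because $\Sigma$ is disjoint from $\sigma_{\infty}$), so $p_a=0$, $[s_j]\cap[s_j]=-1$, and $-K_{Z^*}\cap[s_j]=-1+2=1$. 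Matching all ten values completes the argument.

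As an independent confirmation I would also run the canonical bundle formula: since $Z^*$ is rational, $\chi(\mathcal{O}_{Z^*})=1$; the fibration constructed above is relatively minimal (its fibre components are $(-2)$-curves, so no fibre contains a $(-1)$-curve) and carries a section, hence has no multiple fibres, and the formula for such elliptic surfaces yields $K_{Z^*}=\pi^*(K_{\mathbb{C}P^1}\otimes (R^1\pi_*\mathcal{O}_{Z^*})^{\vee})=\pi^*\mathcal{O}_{\mathbb{C}P^1}(-2+1)=\pi^*\mathcal{O}_{\mathbb{C}P^1}(-1)$, i.e.\ $K_{Z^*}=-[F]$ for the fibre class $[F]=[F_{\infty}^{*}]$. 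This agrees with the adjunction computation and gives the statement on the nose.

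The only place where genuine case-checking enters---and thus the main obstacle---is confirming the uniform numerical input: that across the five Kodaira types $I_0^*, I_1^*, \widetilde{E}_6, \widetilde{E}_7, \widetilde{E}_8$ produced in Sections~\ref{sec:DolVI}--\ref{sec:DolI}, every component of $F_{\infty}^{*}$ is indeed a $(-2)$-curve and every $s_j$ a $(-1)$-section. Once these are granted (together with the observation, analogous to Lemma~\ref{lemma:selfint}, that the blow-down of $\sigma_{\infty}$ changes none of these self-intersections because $\sigma_{\infty}$ meets no blow-up centre), both arguments are immediate and the remaining steps are purely formal.
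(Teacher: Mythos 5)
Your proof is correct, but it takes a genuinely different route from the paper's. The paper argues birationally and uniformly in all six cases: blowing down $\sigma_{\infty}\subset\mathbb{F}_1$ first produces $\mathbb{C}P^2$, under which the red curve $C_2$ maps to $[F_1]+2[F_2]=3[F]=3H$, i.e.\ to an anticanonical cubic divisor since $K_{\mathbb{C}P^2}=-3H$; the nine blow-ups then resolve an \emph{anticanonical} pencil, so the fiber at infinity is the anticanonical class of $Z^*$ essentially by construction. You instead characterize $[F_{\infty}^{*}]$ as the unique solution of the ten intersection equations \eqref{eq:fibration_equations} (uniqueness being guaranteed by the non-degeneracy of the lattice $H_2(Z^*,\mathbb{Z})$, as established in the paper's fibration proposition) and verify by adjunction that $-[K_{Z^*}]$ solves the same system: $-K_{Z^*}\cap[C]=[C]\cap[C]+2-2p_a(C)$ gives $0$ on the $(-2)$-curve components $\widetilde{F}, E_i$ and $1$ on the $(-1)$-sections $s_j$. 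What your approach buys is robustness and generality: it needs no identification with $\mathbb{C}P^2$ and works for any relatively minimal elliptic fibration with a section on a rational surface, as your canonical-bundle-formula cross-check makes explicit. What it costs is the case-by-case numerical input that every component of $F_{\infty}^{*}$ is a $(-2)$-curve and every $s_j$ a $(-1)$-curve; the paper's constructions in Sections~\ref{sec:DolVI}--\ref{sec:DolI} do record exactly this, so your proof closes.

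One small correction to a side remark: your parenthetical claim that the blow-down of $\sigma_{\infty}$ ``changes none of these self-intersections'' is false for the components $\widetilde{F}$ (resp.\ $\widetilde{F}_1,\widetilde{F}_2$). These are proper transforms of fibers of the ruling, each of which meets $\sigma_{\infty}$ transversally in one point, so the blow-down raises their self-intersection by one --- and this is precisely how they become $(-2)$-curves (e.g.\ in the $JKTVI$ case the proper transform of $F_1$ has self-intersection $-3$ before the blow-down and $-2$ after). The claim is correct only for the exceptional curves $E_i$ and $s_j$, which are disjoint from $\sigma_{\infty}$ because the base points of the pencil avoid $\sigma_{\infty}$. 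This does not damage your argument, since the $(-2)$/$(-1)$ data you invoke are asserted for the final surface $Z^*$, but the justification should be stated accordingly.
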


\begin{proof}
    If we immediately blow down $\sigma_{\infty}$ on the original arrangement (e.g. left of Figure \ref{fig:fig4}), then the resulting surface is just $\mathbb{C}P^2$, so the red curve is the total transform of the anticanonical Weil divisor $[F_1] + 2 [F_2]=3[F]$ on $\mathbb{C}P^2$. This can be seen from $K_{\mathbb{C}P^2}\cong \mathcal{O}(-3)$, in other words, because of $K_{\mathbb{C}P^2}=-3H$, where $H$ is the hyperplane bundle on $\mathbb{C}P^2$.
\end{proof}

\begin{center}
\begin{tabular}{ |c|c|>{\centering\arraybackslash}p{3.5cm}|>{\centering\arraybackslash}p{3cm}|  }
 \hline
 \multicolumn{4}{|c|}{Summary} \\
 \hline
 $JKT$ system& polar divisor of $\theta$ &type of irregular singularity at $\infty$ & Dynkin diagram of $F_{\infty}^{*}$\\
 \hline
 \hline
 $JKTVI$   & $D=\{0\}+2\{\infty\}$    &untwisted&$I_0^*$ (or $\widetilde{D}_4$)\\
 \hline
 $JKTV$ &   $D=\{0\}+2\{\infty\}$  & minimally twisted & $I_1^*$ (or $\widetilde{D}_5$) \\
 \hline
 $JKTIVa$ &$D=\{0\}+2\{\infty\}$ & maximally twisted&$\widetilde{E}_6$\\
 \hline
 $JKTIVb$    &$D=3\{\infty\}$ & untwisted&$\widetilde{E}_6$\\
 \hline
 $JKTII$&   $D=3\{\infty\}$  & minimally twisted&$\widetilde{E}_7$\\
 \hline
 $JKTI$& $D=3\{\infty\}$  & maximally twisted& $\widetilde{E}_8$ \\
 \hline
\end{tabular}
\captionof{table}{Classification of the six investigated cases}
\label{tab1}
\end{center}

\section{Algebraic Nahm transformation}\label{sec:Nahm}

In this section we will prove Theorem~\ref{thm:Dolisometry}. 

\subsection{Spectral sheaf}\label{sec:spectral_sheaf}

\begin{defn}
  The \emph{spectral sheaf} is the coherent sheaf $\mathcal{S}_{(\mathcal{E},\theta)}$ of $\mathcal{O}_{\mathbb{F}_1}$-modules defined by the short exact sequence 
\begin{equation}\label{eq:spectral_sheaf}
    0 \to p^* ( \mathcal{E} \otimes K_X(D)^{-1}) \xrightarrow{\zeta I_{\mathcal{E}}-\xi p^*\theta}  p^* \mathcal{E} \otimes \mathcal{O}_{\mathbb{F}_1|X}(1) \to \mathcal{S}_{(\mathcal{E},\theta)} \to 0.
\end{equation}  
\end{defn}

\begin{remark}
The support of $\mathcal{S}_{(\mathcal{E},\theta)}$ is $\Sigma_{(\mathcal{E},\theta)}$. 
Using the birational map $Z^* \to \mathbb{F}_1$ that is an isomorphism between any smooth spectral curve $\Sigma_{(\mathcal{E},\theta)}$ and its proper transform, we may and will, without any notational distinction, consider $\mathcal{S}_{(\mathcal{E},\theta)}$ as a coherent sheaf of $\mathcal{O}_{Z^*}$-modules. 
\end{remark}
\begin{proof}[Proof of Theorem~\ref{thm:BNR}]
The proof is a modification of~\cite{Sz_BNR}. 
The isomorphism in one direction is given by 
\[
    (\mathcal{E}, \theta) \mapsto \mathcal{S}_{(\mathcal{E},\theta)}, 
\]
and the one in the other direction is 
\[
    \mathcal{S}\mapsto (p_* \mathcal{S}, p_* (\zeta \cdot) ), 
\]
where 
\[
    \zeta \cdot\colon  \mathcal{S}\to \mathcal{S}\otimes K_{\mathbb{C}P^1}(D) 
\]
stands for the multiplication operator by $\xi\in H^0 (\mathbb{F}_1, K_{\mathbb{C}P^1}(D) )$. 
Let us check that these morphisms are indeed quasi-inverses of each other. 
For this purpose, let us apply $R^{\bullet } p_*$ to~\eqref{eq:spectral_sheaf} and make use of the projection formula and the identity~\eqref{eq:direct_image_relative_ample_sheaf}. 
This gives 
\[
     0 \to \mathcal{E} \otimes K_X(D)^{-1} \to \mathcal{E} \otimes (\mathcal{O}_{X} \oplus K_X(D)^{-1}) \to R^0 p_* \mathcal{S}_{(\mathcal{E},\theta)} \to 0, 
\]
where the first map is induced by the inclusion $K_X(D)^{-1} \hookrightarrow \mathcal{O}_{X} \oplus K_X(D)^{-1}$. 
We get 
\[
    R^0 p_* \mathcal{S}_{(\mathcal{E},\theta)} \cong \mathcal{E}. 
\]
It immediately follows from~\eqref{eq:spectral_sheaf} that multiplication by $\zeta$ on the cokernel $\mathcal{S}_{(\mathcal{E},\theta)}$ agrees with the action of $\theta$ on $\mathcal{E}$. 

The only thing left to check is that $\theta$ has the required local forms~\eqref{polar1}--\eqref{polar6} if and only if the support $\Sigma_{(\mathcal{E},\theta)}$ of $\mathcal{S}_{(\mathcal{E},\theta)}$ contains scheme-theoretically the base locus of the corresponding elliptic pencil defined in Sections~\ref{sec:DolVI}--\ref{sec:DolI}, because this latter property ensures that the conditions~\eqref{eq:fibration_equations} hold for (the proper transform of) $\Sigma_{(\mathcal{E},\theta)}$. 
In the cases when the leading order term of the local form of $\theta$ at the irregular singularity $\infty\in X$ has Jordan blocks of dimension at most $2$, this goes exactly as in~\cite{ISSz1},~\cite{ISSz2}. 
Assume that the leading order term of the local form of $\theta$ has a single Jordan block, of dimension $3$. 
Then, it is easy to check that the ramification degree of $\Sigma_{(\mathcal{E},\theta)}$ at $\infty$ is equal to $3$, whenever $\Sigma_{(\mathcal{E},\theta)}$ is smooth. 
It is classical~\cite{Hirz} that then the resolution of the base locus of the corresponding pencil is exactly the one used in Sections~\ref{sec:DolIVa},~\ref{sec:DolI}. 
\end{proof}

\subsection{Definition and involutivity of Nahm transformation}

\begin{prop}
For each $*\in \{VI,V,IVa,IVb,II,I\}$, there exists a canonical birational map, defined on the complement of the blow-down of $\sigma_{\infty}$, 
\[
    Z^* \dashrightarrow X \times \widehat{X} ,
\]
where $X, \widehat{X}$ are projective lines over $\mathbb{C}$. 
In particular, the natural projection morphisms 
\[
    p \colon  Z^* \dashrightarrow X, \qquad \hat{p}\colon  Z^* \dashrightarrow \widehat{X}
\]
are well-defined away from the blow-down of $\sigma_{\infty}$. 
\end{prop}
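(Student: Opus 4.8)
The plan is to exhibit the asserted birational map as the product $(p,\hat p)$ of the ruling $p$ of $\mathbb{F}_1$ with a second, ``dual'' ruling $\hat p$, and to show that both pencils have their base locus concentrated at the single point obtained by contracting $\sigma_{\infty}$. Since $Z^*$, $\mathbb{F}_1$, $\CP2$ and $X\times\widehat X\cong\CP1\times\CP1$ are all rational surfaces, a birational map $Z^*\dashrightarrow X\times\widehat X$ certainly exists; the real content of the statement is that it can be chosen canonically, compatibly with the two projections, and that it is a morphism away from the blow-down of $\sigma_{\infty}$. Throughout I write $q\in Z^*$ for the image of $\sigma_{\infty}$ under the contraction, so that ``the complement of the blow-down of $\sigma_{\infty}$'' means $Z^*\setminus\{q\}$, and I use the identification $Z^*\setminus\{q\}\cong\widetilde{\mathbb{F}}_1\setminus\widetilde{\sigma}_{\infty}$, where $\widetilde{\mathbb{F}}_1\to\mathbb{F}_1$ is the nine-fold blow-up at the base points $C_1\cap C_2$.

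First I would treat $p$. The ruling $p\colon\mathbb{F}_1\to X=\CP1$ lifts to $\widetilde{\mathbb{F}}_1$, and because in every one of the six cases the base points lie on the fibres of $p$ and are disjoint from $\sigma_{\infty}$ (recall that $C_1=\Sigma$ is disjoint from $\sigma_{\infty}$), the section $\widetilde{\sigma}_{\infty}$ is contracted to the single point $q$ while no fibre is contracted. Hence $p$ descends to a rational map on $Z^*$ that is a morphism precisely on $Z^*\setminus\{q\}$, its only indeterminacy being $q$, the common point through which all (closures of) fibres pass after the contraction of the section.

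Next, and this is the crux, I would construct the dual ruling $\hat p\colon Z^*\dashrightarrow\widehat X=\CP1$. Its fibres are the rational curves dual, under the spectral correspondence of Theorem~\ref{thm:BNR}, to the fibres of $p$; on the blow-up of $Z^*$ at $q$ its fibre class $\widehat F$ is characterised by $\widehat F^2=0$, $\widehat F\cdot K=-2$ and $\widehat F\cdot[F]=1$, and on $Z^*$ itself its base locus is scheme-theoretically supported at $q$ alone. Equivalently, $\hat p$ is the ruling carried by the Hirzebruch model $\mathbb{F}_2$ of the Painlev\'e (rank two) side, transported to $Z^*$ through the birational identifications of Section~\ref{sec:Dol} (see also \cite{ISSz1},\cite{ISSz2},\cite{Sz_Painl}). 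I would verify, case by case, using the explicit resolution trees of Sections~\ref{sec:DolVI}--\ref{sec:DolI}, that the nine blow-ups together with the contraction of $\sigma_{\infty}$ absorb every base point of this dual pencil except $q$, so that $\hat p$ too is a morphism on $Z^*\setminus\{q\}$.

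Finally I would assemble the product map and check birationality: the morphism $(p,\hat p)\colon Z^*\setminus\{q\}\to X\times\widehat X$ has degree one, because a general fibre of $p$ meets a general fibre of $\hat p$ transversally in a single point ($[F]\cdot\widehat F=1$), and a dominant degree-one morphism of irreducible complex surfaces is birational. Its two components are then exactly the projection morphisms, well defined away from $q$. The main obstacle I anticipate is precisely the construction of $\hat p$ and the proof that its base locus is confined to $q$: the naive candidate given by the eigenvalue (fibre) coordinate $\zeta$ of $K_{\CP1}(D)$ fails, since the pencil of its level sets is based at a point of the zero section that is \emph{not} among the nine blown-up base points, so one must instead use the genuinely dual pencil furnished by the Nahm/Fourier--Laplace correspondence and establish the coincidence of its base locus with $q$ uniformly in $*\in\{VI,V,IVa,IVb,II,I\}$.
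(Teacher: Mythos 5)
There is a genuine gap here, and you name it yourself: the construction of $\hat p$ --- which is the entire content of the proposition --- is never actually carried out, only described by properties it ought to have. Moreover, the two sources you propose to extract it from cannot supply it. Theorem~\ref{thm:BNR} is a statement about spectral sheaves supported on curves in $Z^*$; it does not produce a second ruling or any curves ``dual to the fibres of $p$''. And invoking ``the genuinely dual pencil furnished by the Nahm/Fourier--Laplace correspondence'' is circular within this paper: the algebraic Nahm transform is \emph{defined} (in the definition immediately following this proposition) as $\bigl( \hat p_* \mathcal{S}_{(\mathcal{E},\theta)}, -\tfrac12 \hat p_* (p^* z\cdot)\operatorname{d}\!\hat z \bigr)$, so $\hat p$ must exist before any Nahm transform does. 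Your numerical description also fails to pin the map down: in the rank-eleven lattice of the blow-up of $Z^*$ at $q$ (equivalently in $H_2(Z^*,\mathbb{Z})\cong\mathbb{Z}^{10}$), the conditions $\widehat F^2=0$, $\widehat F\cdot K=-2$, $\widehat F\cdot[F]=1$ are satisfied by many classes --- in the $\mathbb{C}P^2$ model with hyperplane class $h$ and orthogonal total-transform exceptional classes $e_{\infty},e_1,\dots,e_9$, every class $h-e_i$, $1\le i\le 9$, satisfies all three --- and even after choosing a class one would still have to show it is represented by a pencil of rational curves with no base points away from $q$. So the crux is missing, and the case-by-case verification you defer to the six resolution trees is not done.

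The paper fills exactly this hole, by an argument far more elementary than the one you anticipate: an elementary transformation of $\mathbb{F}_1$ centered at one of the nine base points $Q$ of the spectral pencil. Since $Q\notin\sigma_{\infty}$, blowing up $Q$ and blowing down the proper transform of the $p$-fiber through $Q$ turns $\mathbb{F}_1$ into $\mathbb{F}_0=X\times\widehat X$; concretely, $\hat p$ is the pencil of $(+1)$-sections of $p$ passing through $Q$, i.e.\ the pencil of lines through $Q$ in the $\mathbb{C}P^2$ model. This is precisely your ``horizontal'' pencil, but based at one of the nine points that the construction of $Z^*$ blows up --- exactly the cure for the failure you correctly diagnosed for the pencil based at a point of the zero section. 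Since the blow-up of $Q$ can be taken as the first of the nine blow-ups defining $Z^*$, and the only blow-down in that construction is that of $\sigma_{\infty}$, the composite $Z^*\dashleftarrow\widetilde{\mathbb{F}}_1\to\operatorname{Bl}_Q(\mathbb{F}_1)\to X\times\widehat X$ is a morphism away from the image point of $\sigma_{\infty}$, uniformly in $*$. No case analysis is needed, and neither is your degree-one argument: the map is a composition of blow-ups and blow-downs, hence birational by construction.
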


\begin{proof}
Let us denote by $\operatorname{Bl}_Q (\mathbb{F}_1)$ the blow-up of $\mathbb{F}_1$ at one of the points $Q$ involved in the definition of $Z^*$. 
Remember that we have $Q\notin \sigma_{\infty}$. 
If we blow down the proper transform of the fiber of $\mathbb{F}_1$ containing $Q$, then the resulting surface is $X \times \widehat{X}$.
This gives a canonical birational map from the blow-up of $Q$ in $\mathbb{F}_1$, to $X \times \widehat{X}$.
For the construction of $Z^*$, in addition to this first blow-up of $Q$, we need to apply several other blow-ups and a single blow-down. 
Now, after the other blow-ups we still have a dominant birational morphism defined everywhere. 
On the other hand, the only blowing down involved in the definition of $Z^*$ is precisely that of $\sigma_{\infty}$. 
So, the canonical birational map is defined away from the point of $Z^*$ obtained as the blow-down of $\sigma_{\infty}$. 
\end{proof}

The just proved property, combined with the fact that $\Sigma_{(\mathcal{E}, \theta )} \cap \sigma_{\infty} = \varnothing$ for every $(\mathcal{E}, \theta ) \in \mathcal{M}_{\textrm{Dol}}^{JKT*}$, allows us to set the following: 

\begin{defn}
    The algebraic Nahm transform of $(\mathcal{E}, \theta ) \in \mathcal{M}_{\textrm{Dol}}^{JKT*}$ is 
    \[
    \mathcal{N} (\mathcal{E}, \theta ) = \left( \hat{p}_* \mathcal{S}_{(\mathcal{E}, \theta )}, -\frac 12 \hat{p}_* (p^* z\cdot ) \operatorname{d}\! \hat{z} \right).
    \]
    Here, $z\cdot$ stands for the multiplication of functions on $X$ by $z$. 
\end{defn}

\begin{prop}\label{prop:algebraicDolbeault}
Algebraic Nahm transformation is an algebraic isomorphism
\[
    \mathcal{N}\colon (\mathcal{M}_{\textrm{Dol}}^{JKT*} , \mathbf{\Omega}_{\textrm{Dol}}^{JKT*}) \to (\mathcal{M}_{\textrm{Dol}}^{P*}, \mathbf{\Omega}_{\textrm{Dol}}^{P*}). 
\]
It is an algebraic involution up to sign:
\[
    \mathcal{N}^2 = (-1)^*, 
\]
where $(-1)\colon X \to X$ is the map $z\mapsto -z$. 
\end{prop}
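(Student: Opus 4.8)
The plan is to establish the two assertions of Proposition~\ref{prop:algebraicDolbeault} separately: first that $\mathcal{N}$ is an algebraic isomorphism of symplectic moduli spaces, then that it squares to the sign involution. For the first part, I would build the map entirely out of the pieces already constructed in the preceding sections. By Theorem~\ref{thm:BNR}, the assignment $(\mathcal{E},\theta)\mapsto \mathcal{S}_{(\mathcal{E},\theta)}$ identifies $\mathcal{M}_{\textrm{Dol}}^{JKT*}$ with a moduli space of rank-one torsion-free sheaves on the elliptic surface $Z^*$, supported on curves in the class $[F_{\infty}^*]$. The point is that this intermediate object, the spectral sheaf on $Z^*$, is symmetric in the two ruling projections $p$ and $\hat p$: the birational identification $Z^*\dashrightarrow X\times\widehat{X}$ of the previous proposition exhibits $Z^*$ as carrying two elliptic fibration structures (the Hitchin fibration $h$ for the rank $3$ side and its partner for the rank $2$ Painlev\'e side), and pushing $\mathcal{S}_{(\mathcal{E},\theta)}$ forward along $\hat p$ together with the multiplication-by-$z$ operator produces, by the inverse BNR correspondence applied on the $\widehat{X}$-side, a rank $2$ Higgs bundle. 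So I would argue that $\mathcal{N}$ is the composite of the BNR equivalence for $*$-data, an automorphism of the common surface $Z^*$ that swaps the two rulings, and the inverse BNR equivalence for the $P*$-data. Each of these is an algebraic isomorphism (the first and third by Theorem~\ref{thm:BNR}, the middle one biregular away from the blown-down point, which the spectral curves avoid since $\Sigma_{(\mathcal{E},\theta)}\cap\sigma_{\infty}=\varnothing$), hence so is $\mathcal{N}$. The matching of numerical invariants --- that the degree and the local forms of the transformed Higgs bundle land in the correct $\mathcal{M}_{\textrm{Dol}}^{P*}$ stratum --- follows from Lemma~\ref{lem:forms} read on the $\widehat{X}$-side together with the explicit fiber-at-infinity computations of Sections~\ref{sec:DolVI}--\ref{sec:DolI}.

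For the symplectic statement, I would defer the actual comparison of $\mathbf{\Omega}_{\textrm{Dol}}^{JKT*}$ and $\mathbf{\Omega}_{\textrm{Dol}}^{P*}$ to the companion Proposition~\ref{prop:symplectic} (the map preserves the natural symplectic structures), citing it here, since the present proposition's proof should concentrate on establishing the isomorphism and the involutivity. Indeed the surface $Z^*$ is log-Calabi--Yau by Proposition~\ref{prop:finf}, $[F_{\infty}^*]=-[K_{Z^*}]$, so it carries a canonical Poisson/symplectic structure on the complement of $F_{\infty}^*$ coming from a section of the anticanonical bundle, and the symplectic forms on both moduli spaces are induced from this same datum via the two BNR identifications; this is precisely what makes the swap of rulings a symplectomorphism.

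For the involutivity $\mathcal{N}^2=(-1)^*$, the strategy is to trace a sheaf through the transform twice. Applying $\mathcal{N}$ once replaces $\mathcal{S}$ by $\hat p_* \mathcal{S}$ with the operator built from multiplication by $z$; applying it again pushes forward the associated spectral sheaf along the $X$-projection. The key observation is that the two-fold transform reconstructs the \emph{same} spectral sheaf on $Z^*$, because the BNR correspondence of Theorem~\ref{thm:BNR} is an equivalence of categories and $Z^*$ is symmetric under the ruling swap, so composing the swap with itself is the identity on the surface. The only discrepancy comes from the signs in the definition of the operator --- the factor $-\tfrac12$ and the roles of the coordinates $z,\hat z$ --- which accounts for the pullback by $z\mapsto -z$. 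Concretely I would compute that the double pushforward returns $(\mathcal{E},\theta)$ up to the automorphism of $X$ induced by negating the fiber coordinate, exactly as in the classical (and Fourier--Laplace) situation where the inverse transform differs from the transform by a sign on the spectral variable; this is consistent with Proposition~\ref{prop:FLN} identifying $\mathcal{N}$ with Fourier--Laplace transformation.

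The main obstacle I anticipate is the bookkeeping at the irregular fiber $F_{\infty}^*$: the birational map $Z^*\dashrightarrow X\times\widehat{X}$ and the ruling swap are only defined away from the blow-down of $\sigma_{\infty}$, so I must verify carefully that the spectral sheaves, which are supported on curves in $|F_{\infty}^*|$ meeting this locus, behave correctly under pushforward --- in particular that $\hat p_*\mathcal{S}$ is again a rank-one torsion-free sheaf of the right degree ($d+3$ transforming to the rank $2$ degree) on the Painlev\'e-side spectral curve, and that no torsion or jump in rank is introduced at infinity. Controlling this requires the precise local analysis of $F_{\infty}^*$ in each of the six cases, which is where the constructions of Sections~\ref{sec:DolVI}--\ref{sec:DolI} and the Dynkin-type classification in Table~\ref{tab1} enter decisively.
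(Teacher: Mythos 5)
Your proposal reproduces the paper's overall architecture: like the paper, you do not prove preservation of the symplectic forms inside this proposition but defer it to Proposition~\ref{prop:symplectic}, and you reduce involutivity to the statement that the twice-transformed object recovers the original spectral sheaf up to a sign. The genuine divergence is in how the target space is identified. The paper deduces that $\mathcal{N}$ lands in (and surjects onto) $\mathcal{M}_{\textrm{Dol}}^{P*}$ from the commutative diagram of Proposition~\ref{prop:FLN} together with the de Rham isomorphism of~\cite{ESz3} --- an analytic detour through the non-abelian Hodge diffeomorphism and Fourier--Laplace transformation --- and only remarks parenthetically that this ``can be checked directly from the equations of the spectral curves.'' You promote that parenthetical remark to the main argument: BNR on the $X$-side (Theorem~\ref{thm:BNR}), the same sheaf on $Z^*$ pushed forward along $\hat{p}$, inverse BNR on the $\widehat{X}$-side, with local forms matched via Lemma~\ref{lem:forms}. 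Your route is self-contained in the Dolbeault world and does not lean on the companion paper, but its price is exactly the verification you flag at the end: one must carry out, case by case, the computation that the Puiseux branches of $\Sigma_{(\mathcal{E},\theta)}$ at infinity, read along the $\widehat{X}$-ruling, reproduce the rank $2$ Painlev\'e local forms and degrees. That is the real content hidden in ``follows from Lemma~\ref{lem:forms} read on the $\widehat{X}$-side,'' and it is precisely the work the paper's citation of Proposition~\ref{prop:FLN} and~\cite{ESz3} avoids.

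One imprecision you should repair: there is no ``automorphism of the common surface $Z^*$ that swaps the two rulings,'' and none is needed. The geometry is not symmetric in $p$ and $\hat{p}$ --- the spectral curves are triple sections of $p$ but double sections of $\hat{p}$ (this is why a rank $3$ moduli space is exchanged with a rank $2$ one) --- so no automorphism exchanging the two fiber classes can play the role you assign to it, and your justification of involutivity as ``composing the swap with itself is the identity on the surface'' is not literally meaningful. What the definition of $\mathcal{N}$ actually uses is one surface, one sheaf, two projections; and involutivity is the identity of spectral sheaves
\[
    (-1)^* \mathcal{S}_{(\mathcal{E},\theta)} = \mathcal{S}_{(\widehat{\mathcal{E}},\widehat{\theta})},
\]
which is how the paper argues, citing~\cite[Theorem~10.2]{ASz}. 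Your sign bookkeeping via the factor $-\tfrac12$ and the exchanged roles of $z$ and $\hat{z}$ correctly anticipates where the $(-1)^*$ comes from, so the fix is to replace the swap-automorphism language by this sheaf identity; the rest of your argument then goes through.
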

\begin{proof}
Algebraicity is straightforward from the definition. 
The fact that each $\mathcal{M}_{\textrm{Dol}}^{JKT*}$ gets mapped to $\mathcal{M}_{\textrm{Dol}}^{P*}$ follows from Proposition~\ref{prop:FLN} and~\cite{ESz3} (and can be checked directly from the equations of the spectral curves, too). 
Involutivity up to sign follows from the identity 
\[
    (-1)^* \mathcal{S}_{(\mathcal{E}, \theta )} = \mathcal{S}_{(\widehat{\mathcal{E}}, \widehat{\theta} )}, 
\]
exactly as in~\cite[Theorem~10.2]{ASz}.
\end{proof}

\subsection{Relationship with Fourier--Laplace transformation}

For details and references on Fourier--Laplace transformation of $\mathcal{D}$-modules, see~\cite{ESz3}. 
\begin{prop}\label{prop:FLN}
There exists a commutative diagram of diffeomorphisms between smooth $4$-manifolds: 
\begin{equation*}
    \xymatrix{\mathcal{M}_{\textrm{Dol}}^{JKT*} \ar[r]^{\mathcal{N}} \ar[d]_{NAH} & \mathcal{M}_{\textrm{Dol}}^{P*} \ar[d]^{NAH} \\
    \mathcal{M}_{\textrm{dR}}^{JKT*} \ar[r]^{\mathcal{F}} & \mathcal{M}_{\textrm{dR}}^{P*}}
\end{equation*}
Here, $NAH$ is the non-abelian Hodge diffeomorphism~\eqref{eq:NAH}, $\mathcal{F}$ is Fourier--Laplace transformation of $\mathcal{D}$-modules, and $\mathcal{N}$ is algebraic Nahm transformation of irregular Higgs bundles. 
\end{prop}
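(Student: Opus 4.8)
The plan is to exhibit a single geometric transformation of the underlying hyperK\"ahler four-manifold whose holomorphic incarnation in the Dolbeault complex structure $I$ is the algebraic Nahm transform $\mathcal{N}$ and whose holomorphic incarnation in the de Rham complex structure $J$ is Fourier--Laplace $\mathcal{F}$. Writing $M = \mathcal{M}^{JKT*}$ and $\widehat{M} = \mathcal{M}^{P*}$ for the two hyperK\"ahler manifolds, the map $NAH$ is the identity of the underlying smooth manifold regarded as a passage from complex structure $I$ to $J$ (this is the content of Theorem~\ref{thm:moduli}). Hence, if a single diffeomorphism $N\colon M \to \widehat{M}$ restricts to $\mathcal{N}$ in structure $I$ and to $\mathcal{F}$ in structure $J$, then $\mathcal{F}\circ NAH$ and $NAH\circ\mathcal{N}$ both coincide with $N$ as maps of smooth manifolds, and the square commutes formally. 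Thus the real content of the proposition is the \emph{identification} of the two algebraic transforms as two faces of one geometric object, after which commutativity is automatic.

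First I would recall the analytic Nahm transformation $N$ acting on solutions of the self-duality equations over $X\setminus D$, built from the fibrewise kernel of the Dolbeault operator coupled to the Poincar\'e connection on $X\times\widehat{X}$; by construction it intertwines the whole twistor family, hence respects both $I$ and $J$ at once. Next I would identify $N$ in the de Rham structure $J$ with $\mathcal{F}$: this is the classical dictionary between Nahm's transformation and Fourier--Laplace of the associated $\mathcal{D}$-module, whose effect on formal type, Stokes data and residues is established in~\cite{ESz3}; the matching of irregular types is precisely $\tfrac12 q_i \mapsto q_i$ together with Simpson's formulas~\eqref{eq:simpson}. Finally I would identify $N$ in the Dolbeault structure $I$ with the algebraic Nahm transform $\mathcal{N}$ of Section~\ref{sec:spectral_sheaf}, the point being that the spectral sheaf $\mathcal{S}_{(\mathcal{E},\theta)}$ on $Z^*$ is the Dolbeault avatar of the Poincar\'e kernel, so that pushing forward along $\hat p$ reproduces $N$ in structure $I$.

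Rather than carry out this last identification through harmonic spinors, which is exactly the computation we wish to avoid, I would verify it on the dense open locus of smooth spectral curves, where every object is explicit. There the birational model $Z^*\dashrightarrow X\times\widehat{X}$ identifies $\Sigma$ with a curve in the product, and the two projections realize $(\mathcal{E},\theta)$ and its Nahm transform as the two rulings of one and the same curve; this is visibly the geometric counterpart of the position--momentum swap effected by $\mathcal{F}$ on principal symbols. Since all four maps in the square are real-analytic (indeed algebraic along the horizontal edges), agreement on a dense subset forces agreement everywhere, and the commutativity propagates to the singular fibers by continuity.

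The main obstacle is the transcendental step: $NAH$ is not algebraic, so relating the algebraically defined $\mathcal{S}_{(\mathcal{E},\theta)}$ to the analytically defined harmonic bundle underlying $N$ requires controlling the asymptotics of the harmonic metric near $D$ and checking that the $L^2$ cohomology computing $N$ in structure $I$ agrees with the sheaf pushforward $\hat p_*\mathcal{S}_{(\mathcal{E},\theta)}$. The delicate points are the matching of parabolic weights and residue eigenvalues across the singularity via~\eqref{eq:simpson}, and ensuring this comparison is uniform as the spectral curve degenerates, so that the density argument closes. Once these asymptotics are secured, the identification of $N$ with $\mathcal{N}$ in structure $I$, and hence the commutativity of the whole diagram, follows.
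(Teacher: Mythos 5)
Your overall architecture is the same as the paper's: both proofs hinge on exhibiting one transformation of wild harmonic bundles (equivalently, of solutions of the self-duality equations) whose Dolbeault face is $\mathcal{N}$ and whose de Rham face is $\mathcal{F}$, after which commutativity of the square is formal because $NAH$ is the identity on the underlying space of harmonic bundles. However, there is a genuine gap at the point you explicitly set aside. The entire mathematical content of the paper's proof is precisely the step you defer to the last paragraph as ``the main obstacle'': a lemma asserting that for every $\hat z$ off the singular locus, the $L^2$-cohomology $L^2H^1(D_{\hat z})$ defining the analytic transform is canonically isomorphic both to the hypercohomology of the twisted de Rham complex $\mathcal{V}\xrightarrow{\nabla_{\hat z}}\mathcal{W}\otimes K_X(C)$ (good lattices) and to the hypercohomology of the twisted Higgs complex $\mathcal{E}'\xrightarrow{\theta_{\hat z}}\mathcal{E}\otimes K_X$. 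The paper proves this by the K\"ahler identities (identifying the kernels of the two Laplacians) together with explicit weighted $L^2$ resolutions, where the boundedness estimates near $D$ use the Poincar\'e metric, the model metric $h_0$, and the positivity assumption~\eqref{eq:par_wt} on parabolic weights. Writing ``once these asymptotics are secured, the identification follows'' restates the claim to be proved rather than proving it.

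Your proposed workaround --- verify the identification on the dense locus of smooth spectral curves ``where every object is explicit,'' then propagate by real-analyticity --- does not actually avoid this analysis. On that locus the \emph{algebraic} transform $\mathcal{N}$ is explicit, but the \emph{analytic} transform $N$ is not: it is built from harmonic representatives with respect to the Hermitian--Einstein metric, which is never explicit, even for smooth spectral curves. Your observation that the two rulings of $Z^*$ exchange $(\mathcal{E},\theta)$ with its transform is a statement entirely about $\mathcal{N}$; it says nothing about $N$, so the comparison $N=\mathcal{N}$ on the dense locus still requires exactly the $L^2$-to-sheaf-theoretic bridge you wished to skip. Note also that the density/continuity step is superfluous in the paper's approach: the $L^2$-cohomology lemma holds pointwise at every harmonic bundle in the moduli space, smooth spectral curve or not, so no degeneration argument is needed once the lemma is in place.
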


\begin{proof}
This follows the exact same argument as in~\cite[Theorem~8.5]{ASz}, see also~\cite{Sz_Nahm}. 
We let 
\[
    D_{\hat{z}} = D - \frac 12 \hat{z} \operatorname{d} \! z  - \frac 12 \overline{\hat{z} \operatorname{d} \! z}
\]
define the $\hat{z}$-twisted smooth integrable connection over $X\setminus D$. 
Parallelly, we also set 
\[
   \nabla_{\hat{z}} = \nabla - \hat{z} \operatorname{d} \! z, \qquad  \theta_{\hat{z}} = \theta - \frac 12 \hat{z} \operatorname{d} \! z.
\]
We endow $X\setminus D$ by a Riemannian metric that is equal to the Poincar\'e metric\footnote{The set of $L^2$ forms of degree $1$ is independent on the conformal class of this metric.} near each point of $D$, and is smooth on the complement of $D$. 
We consider the Hermitian--Einstein metric $h$ on $E$. 

Let us define the analytic subsheaf of full rank (i.e., negative Hecke transform) $\mathcal{E}'\subset \mathcal{E}$ by setting, 
\begin{equation}\label{eq:E'}
    \mathcal{E}'(U) = \{ \mathbf{e} \in \mathcal{E}(U) \; \vert \quad \theta (\mathbf{e} ) \in \mathcal{E}\otimes K_{\mathbb{C}P^1} (U) \}
\end{equation}
for any analytic open $U\subset X$. 

\begin{lemma}
For every $\hat{z} \in \widehat{X} \setminus  \widehat{C}$ the $L^2$-cohomology space $L^2 H^1 (D_{\hat{z}})$ of degree $1$ of the elliptic complex 
\begin{equation}\label{eq:L2_complex}
     \Omega^{0}\otimes E \xrightarrow{D_{\hat{z}}}  \Omega^1 \otimes E \xrightarrow{D_{\hat{z}}} \Omega^2 \otimes E
\end{equation}
is canonically isomorphic to the hypercohomology 
\begin{equation}\label{eq:dR_complex}
     \mathbb{H}^1 \left(\mathcal{V} \xrightarrow{\nabla_{\hat{z}}} \mathcal{W} \otimes K_X(C) \right)
\end{equation}
for any pair of good lattices and to 
\begin{equation}\label{eq:Dolbeault_complex}
    \mathbb{H}^1 \left( \mathcal{E}' \xrightarrow{\theta_{\hat{z}}}  \mathcal{E}\otimes K_X \right). 
\end{equation}
\end{lemma}
\begin{proof}
By the K\"ahler identities, the Laplace operators associated to $D_{\hat{z}}$ and $\bar{\partial}_{\mathcal{E}} + \theta_{\hat{z}}$ only differ by the factor $2$.
In particular, their kernels agree. 
In different terms, the first $L^2$-cohomology space of~\eqref{eq:L2_complex} agrees with that of 
\begin{equation}\label{eq:Dolbeault_L2_complex}
     \Omega^{0}\otimes E \xrightarrow{\bar{\partial}_{\mathcal{E}} + \theta_{\hat{z}}}  \Omega^1 \otimes E \xrightarrow{\bar{\partial}_{\mathcal{E}} + \theta_{\hat{z}}} \Omega^2 \otimes E.
\end{equation}
We will only show that the first $L^2$-cohomology space of this latter complex is canonically isomorphic to~\eqref{eq:Dolbeault_complex}; the proof of the isomorphism between~\eqref{eq:L2_complex} and~\eqref{eq:dR_complex} is completely similar. 
We will only sketch the argument, see~\cite[Section~4.3]{Sz_Nahm}. 

First, we observe that there is a resolution
\begin{equation}\label{eq:Dolbeault_resolution}
   \mathcal{E}\otimes K_X \hookrightarrow L^2 (E\otimes \Omega^{1,0}) \xrightarrow{\bar{\partial}_{\mathcal{E}}} L^2 (E\otimes \Omega^2).     
\end{equation}
Indeed, let $\mathbf{e} \operatorname{d}\!t$ be a local meromorphic section of $E$ on a disc $B$ centered at $p\in D$ of sufficiently tiny radius, with holomorphic coordinate $t$ centered at $p$. 
Let us expand $\mathbf{e}$ with respect to a local holomorphic frame of $\mathcal{E}$ compatible with the parabolic structure: 
\[
    \mathbf{e} = f_1 \mathbf{e}_1 + f_2 \mathbf{e}_2 + f_3 \mathbf{e}_3, 
\]
for suitable meromorphic coefficients $f_i$ with poles at $p$. 
We need to show that $\mathbf{e}$ is square-integrable if and only the singularities of $f_1, f_2, f_3$ at $p$ are removable. 
The Hermitian--Einstein metric $h$ on $E$ is mutually bounded with~\eqref{eq:model_metric}, and the Riemannian metric $g$ on the base disc is the Poincar\'e metric 
\[
    \frac{|\!\operatorname{d}\!t |^2}{|t |^2 |\ln |t|^2|^2}. 
\]
Then, we have 
\[
    | \mathbf{e}_i |_{h,g}^2 \approx  |t|^{2 \alpha_i} |\ln |t|^2|^{k_i} , 
\]
where $\approx$ stands for mutually bounded. 
We deduce 
\begin{align*}
    \int_B  | \mathbf{e}  \operatorname{d}\!t |_{h,g}^2 \operatorname{d}\! Vol_g & \approx  \int_B  \sum_{i=1}^3 | f_i|^2  |t|^{2 \alpha_i} |\ln |t|^2|^{k_i} \frac{|\!\operatorname{d}\!t |^2}{|t |^2 |\ln |t|^2|^2} \\
    & = \int_B  \sum_{i=1}^3 | f_i|^2  |t|^{2 \alpha_i - 2} |\ln |t|^2|^{k_i-2} |\!\operatorname{d}\!t |^2 \\
    & = \sum_{i=1}^3  \int_B | f_i|^2  r^{2 \alpha_i - 1} |\ln |t|^2|^{k_i-2} |  \operatorname{d}\! r \operatorname{d}\! \varphi 
\end{align*}
where $t = r e^{\sqrt{-1}\varphi}$ are polar coordinates. 
Now, by our assumption~\eqref{eq:par_wt}, the integrals on the right hand side are bounded if $f_i$ are bounded. 
Conversely, if at least one of the $f_i$ has a pole, then the corresponding integral is unbounded. 
This shows that~\eqref{eq:Dolbeault_resolution} is exact at its middle term. 
Exactness at the last term follows from the $\bar{\partial}$-Poincar\'e lemma. 

Now, we let 
\begin{align*}
    \tilde{L}^2 (E ) & = \{ \mathbf{e} \in E(U) \; \vert \quad \theta (e) \in {L}^2 (E \otimes \Omega^{1,0}), \; \bar{\partial}_{\mathcal{E}} \mathbf{e} \in  {L}^2 (E \otimes \Omega^{0,1}) \} \\
    \tilde{L}^2 (E \otimes \Omega^{0,1}) & = \{ \mathbf{e} \operatorname{d}\!\bar{z} \in E\otimes \Omega^{0,1}(U) \; \vert \quad \theta \wedge e \in {L}^2 (E \otimes \Omega^2) \}
\end{align*}
We then obtain similarly the resolution 
\[
 \mathcal{E}' \hookrightarrow \tilde{L}^2 (E ) \xrightarrow{\bar{\partial}_{\mathcal{E}}} \tilde{L}^2 (E\otimes \Omega^{0,1}). 
\]
We deduce that the hypercohomology spaces of~\eqref{eq:Dolbeault_complex} are canonically isomorphic to the cohomology spaces of the double complex 
\[
\xymatrix{\tilde{L}^2 (E\otimes \Omega^{0,1}) \ar[r]^{\theta_{\hat{z}}} & L^2 (E\otimes \Omega^2) \\ 
 \tilde{L}^2 (E ) \ar[r]^{\theta_{\hat{z}}}  \ar[u]^{\bar{\partial}_{\mathcal{E}}} & L^2 (E\otimes \Omega^{1,0}) \ar[u]_{\bar{\partial}_{\mathcal{E}}}}. 
\]
This is, by its definition, the $L^2$-cohomology of~\eqref{eq:Dolbeault_L2_complex}.  
\end{proof}
We then identify $L^2 H^1 (D_{\hat{z}})$ with the space of $D_{\hat{z}}$-harmonic $1$-forms, and let 
\[
    \widehat{\pi}_{\hat{z}} \colon L^2 (X, E \otimes \Omega^1) \to L^2 H^1 (D_{\hat{z}})
\]
stand for orthogonal projection. 
Next, we define the transformed vector bundle of $E$ by 
\[
    \widehat{E}_{\hat{z}} =  L^2 H^1 (D_{\hat{z}}), 
\]
and we endow it with the Hermitian metric 
\[
    \hat{h} (\alpha, \beta ) = \int_X  h (\alpha, \beta )
\]
for any harmonic $E$-valued $1$-forms $\alpha, \beta$ on $X$ representing their respective $L^2$-cohomology classes. 
Here, if $\alpha = a_{1} \operatorname{d} \! z + a_{\bar{1}} \operatorname{d} \! \bar{z}$ and $\beta = b_{1} \operatorname{d} \! z + b_{\bar{1}} \operatorname{d} \! \bar{z}$ then 
\[
    h (\alpha, \beta ) = \left( h(a_{1}, b_{1}) - h (a_{\bar{1}}, b_{\bar{1}}) \right) \operatorname{d} \! z \wedge \operatorname{d} \! \bar{z}.
\]
Moreover, we endow $\widehat{E}$ by the holomorphic structure defined by the $\bar{\partial}$-operator given by 
\[
    \widehat{\pi} \circ \frac{\partial}{\partial \overline{\hat{z}}},
\]
where the latter component denotes the trivial $(0,1)$-connection with respect to the variable $\hat{z}$ on the $L^2$-sections of $E$ on $X$, viewed as a trivial Hilbert bundle on $\widehat{X}$. 
We denote by $\widehat{\mathcal{E}}$ the resulting holomorphic vector bundle. 
We extend $\widehat{\mathcal{E}}$ over $\widehat{C}$ by the formula~\eqref{eq:Dolbeault_complex}. 
We define a parabolic structure on $\widehat{\mathcal{E}}$ at $\widehat{C}$ by the locally free subsheaves 
\[
    \widehat{\mathcal{E}}_{\alpha } = \mathbb{H}^1 \left( \mathcal{E}'_{\alpha }\xrightarrow{\theta_{\hat{z}}}  \mathcal{E}_{\alpha }\otimes K_X (D) \right), 
\]
where 
\[
    \mathcal{E}'_{\alpha } (U) = \{ e\in \mathcal{E}_{\alpha }(U) \; \vert \quad \theta (e) \in \mathcal{E}_{\alpha }\otimes K_{\mathbb{C}P^1} (U) \}.
\]
Finally, we define the transformed Higgs field by 
\[
    ( \widehat{\theta} (\alpha ))(\hat{z}) = -\frac 12 \widehat{\pi}_{\hat{z}} (z \alpha ) \operatorname{d} \! \hat{z}. 
\]
This way, $(\widehat{\mathcal{E}}, \hat{h}, \widehat{\theta})$ is a wild harmonic bundle, and $\mathcal{F}, \mathcal{N}$ are merely the de Rham, respectively Dolbeault interpretation of the transformation of harmonic bundles 
\[
    (\mathcal{E}, h, \theta ) \mapsto (\widehat{\mathcal{E}}, \hat{h}, \widehat{\theta}). 
\]
\end{proof}

\subsection{Moduli spaces of sheaves on $Z^*$}

We now consider the log-Calabi--Yau pairs $(Z^*, F_{\infty}^{*})$ described in Section~\ref{sec:Dol}. 
The line bundle associated to $F_{\infty}^{*}$ is thus $K_{Z^*}^{-1}$, see Proposition \ref{prop:finf}. 
By Lemma~\ref{lem:pencil}, we have 
\[
    H^0 (Z^*, K_{Z^*}^{-1}) \cong \mathbb{C}^2.
\]
We let $\mathcal{M}_{\textrm{Muk}}^{*}$ denote the moduli space~\cite[Example~0.5]{Muk} of simple coherent sheaves $\mathcal{S}$ on $Z^*$ such that 
\begin{itemize}
    \item $c_0 (\mathcal{S} ) = 0\in H^0 (Z^*, \mathbb{Z})$, 
    \item $\operatorname{supp} (\mathcal{S}) \approx F_{\infty}^{*}$, 
    \item $\int_{\operatorname{supp} (\mathcal{S})} c_1 ( \mathcal{S}) = d+3$, 
\end{itemize}
where $\operatorname{supp}(\mathcal{S})$ stands for the support of $\mathcal{S}$, $\approx$ means linear equivalence and $d = \deg \mathcal{E}$. 
It is a smooth symplectic quasi-projective surface; let us denote by 
\[
    \mathbf{\Omega}_{\textrm{Muk}}^{*} \in H^{2,0} ( \mathcal{M}_{\textrm{Muk}}^{*})
\]
its holomorphic symplectic form. 
The Zariski tangent space at $\mathcal{S}\in \mathcal{M}_{\textrm{Muk}}^{*}$ is canonically isomorphic to the vector space $\operatorname{Ext}_{\mathcal{O}_{Z^*}}^1 (\mathcal{S} , \mathcal{S} )$. 
\begin{lemma}
Assume that the algebraic curve $\operatorname{supp} (\mathcal{S})$ is smooth. 
Then, $\operatorname{Ext}_{\mathcal{O}_{Z^*}}^2 (\mathcal{S} , \mathcal{S} ) \cong \mathbb{C}$. 
\end{lemma}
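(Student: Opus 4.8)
The plan is to reduce everything to Serre duality on the smooth projective surface $Z^*$ and then to use the log-Calabi--Yau structure to show that tensoring by $K_{Z^*}$ does nothing to $\mathcal{S}$. Since $Z^*$ is a smooth projective (rational elliptic) surface, Serre duality for coherent sheaves gives a canonical isomorphism
\[
    \operatorname{Ext}_{\mathcal{O}_{Z^*}}^2 (\mathcal{S},\mathcal{S}) \cong \operatorname{Hom}_{\mathcal{O}_{Z^*}} (\mathcal{S}, \mathcal{S}\otimes K_{Z^*})^{\vee},
\]
so it suffices to prove that the space on the right is one-dimensional.

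First I would record the geometry of the support. Writing $C = \operatorname{supp}(\mathcal{S})$, by assumption $C$ is smooth and $C \approx F_{\infty}^{*}$. Since $|F_{\infty}^{*}|$ is precisely the pencil defining the elliptic fibration $Z^*\to\mathbb{C}P^1$ constructed above, the smooth member $C$ is a smooth fiber of that morphism; hence its normal bundle is trivial, $N_{C/Z^*}\cong\mathcal{O}_C$. On the other hand, Proposition~\ref{prop:finf} gives $\mathcal{O}_{Z^*}(F_{\infty}^{*}) = K_{Z^*}^{-1}$, so that $\mathcal{O}_{Z^*}(C)\cong K_{Z^*}^{-1}$ and therefore, restricting to $C$,
\[
    K_{Z^*}|_C \cong \mathcal{O}_{Z^*}(-C)|_C = N_{C/Z^*}^{\vee} \cong \mathcal{O}_C .
\]

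Next I would use this triviality to remove the twist. As $C$ is smooth and $\mathcal{S}$ is a rank-one torsion-free sheaf supported there, we have $\mathcal{S} = \iota_* L$ for a line bundle $L$ on $C$, where $\iota\colon C\hookrightarrow Z^*$. The projection formula then yields $\mathcal{S}\otimes K_{Z^*}\cong \iota_*(L\otimes K_{Z^*}|_C)\cong \iota_* L = \mathcal{S}$, using the triviality of $K_{Z^*}|_C$ just established. Consequently $\operatorname{Hom}_{\mathcal{O}_{Z^*}}(\mathcal{S},\mathcal{S}\otimes K_{Z^*}) \cong \operatorname{Hom}_{\mathcal{O}_{Z^*}}(\mathcal{S},\mathcal{S})$, which equals $\mathbb{C}$ because $\mathcal{S}$ is simple. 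Feeding this back into Serre duality gives $\operatorname{Ext}_{\mathcal{O}_{Z^*}}^2(\mathcal{S},\mathcal{S})\cong\mathbb{C}$, as claimed.

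The only genuinely delicate point is the identification $K_{Z^*}|_C\cong\mathcal{O}_C$; the rest is formal. I expect the step requiring the most care to be the triviality of the normal bundle $N_{C/Z^*}$: this is immediate once one uses that $C$ is a fiber of an honest morphism $Z^*\to\mathbb{C}P^1$ (the elliptic fibration proved above), but it would \emph{not} follow merely from $C\cdot C = 0$ for a degree-$0$ line bundle on a genus-one curve need not be trivial. I would therefore be careful to invoke the fibration, and not just the self-intersection number, at this point.
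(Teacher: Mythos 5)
Your proof is correct, but it takes a genuinely different route from the paper's. The paper follows Donagi--Markman and \cite{Sz_Plancherel}: it uses the local-to-global spectral sequence (degenerating at $E_2$) to write $\operatorname{Ext}^2_{\mathcal{O}_{Z^*}}(\mathcal{S},\mathcal{S}) \cong H^1\bigl(Z^*, \mathcal{E}xt^1_{\mathcal{O}_{Z^*}}(\mathcal{S},\mathcal{S})\bigr)$, identifies the sheaf $\mathcal{E}xt^1_{\mathcal{O}_{Z^*}}(\mathcal{S},\mathcal{S}) \cong \iota_* N_{\Sigma\subset Z^*}$ (using, as you do, that simplicity and smoothness make $\iota^*\mathcal{S}$ invertible), then invokes the Liouville symplectic structure on $Z^*\setminus F_{\infty}^{*}$ --- for which the support curve is Lagrangian --- to identify $N_{\Sigma\subset Z^*}\cong K_{\Sigma}$, and concludes by Serre duality \emph{on the curve}: $H^1(\Sigma,K_{\Sigma})\cong\mathbb{C}$. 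You instead apply Serre duality \emph{on the surface} at the outset, reducing the claim to $\operatorname{Hom}_{\mathcal{O}_{Z^*}}(\mathcal{S},\mathcal{S}\otimes K_{Z^*})\cong\mathbb{C}$, and then remove the twist via the log-Calabi--Yau property of Proposition~\ref{prop:finf} combined with the triviality of the normal bundle of a fiber of the elliptic fibration, letting simplicity finish the argument. Both proofs consume the same three inputs (smoothness of the support, simplicity of $\mathcal{S}$, and the anticanonical class of the support), but package them differently: yours is more elementary --- no spectral sequence and no symplectic geometry --- whereas the paper's produces the intermediate identification $\mathcal{E}xt^1_{\mathcal{O}_{Z^*}}(\mathcal{S},\mathcal{S})\cong\iota_* N_{\Sigma\subset Z^*}\cong\iota_* K_{\Sigma}$, which is precisely the sheaf-level statement underlying the comparison of symplectic forms in Proposition~\ref{prop:symplectic}, and which applies verbatim to any smooth Lagrangian support curve rather than only to an anticanonical fiber. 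Your closing caveat is also well placed: $C\cdot C=0$ alone would not give $N_{C/Z^*}\cong\mathcal{O}_C$, and your appeal to the honest morphism $Z^*\to\mathbb{C}P^1$ is what makes the step valid; alternatively, you could have noted that $C$ is disjoint from $F_{\infty}^{*}$ (two distinct fibers), so that $K_{Z^*}^{-1}|_C\cong\mathcal{O}_{Z^*}(F_{\infty}^{*})|_C\cong\mathcal{O}_C$ follows even more directly.
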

\begin{proof}
We follow~\cite[Remark~8.17]{DonMark} and~\cite[Lemma~5.3]{Sz_Plancherel}. 
Let us denote the support of $\mathcal{S}$ by $\Sigma$, the inclusion $\Sigma\subset Z^*$ by $\iota$, and the normal bundle of $\Sigma$ in $Z^*$ by $N_{\Sigma \subset Z^*}$. 
By the simplicity and smoothness assumptions, $\iota^* \mathcal{S}$ is an invertible sheaf on $\Sigma$. 
We have 
\begin{align*}
    \mathcal{E}xt^1_{\mathcal{O}_{Z^*}} (\mathcal{S} , \mathcal{S} ) & \cong \mathcal{E}xt^1_{\mathcal{O}_{Z^*}} (\mathcal{O}_{\Sigma} , \mathcal{O}_{\Sigma} ) \\
    & \cong \iota_* K_{\Sigma} \otimes_{\mathcal{O}_{Z^*}} K_{Z^*}^{-1} \\ 
    & \cong \iota_* N_{\Sigma \subset Z^*}. 
\end{align*}
On the other hand, the canonical Liouville symplectic structure of $Z^* \setminus F_{\infty}^{*}$ (for which $\Sigma\subset Z^*$ is Lagrangian) gives an isomorphism $N_{\Sigma \subset Z^*}\cong K_{\Sigma}$. 
It follows from the degeneration at $E_2$ of the spectral sequence abutting to $\operatorname{Ext}^2$ that  
\begin{align*}
        \operatorname{Ext}_{\mathcal{O}_{Z^*}}^2 (\mathcal{S} , \mathcal{S} ) \notag & \cong H^1 (Z^*, \mathcal{E}xt^1_{\mathcal{O}_{Z^*}} (\mathcal{S} , \mathcal{S} ) ) \\
        & \cong H^1 (\Sigma , N_{\Sigma \subset Z^*}) \\
        & \cong H^1 (\Sigma, K_{\Sigma} )\\
        & \cong \mathbb{C},
\end{align*}
the last morphism being Serre duality. 
\end{proof}
We are now ready to give a concrete description of $\mathbf{\Omega}_{\textrm{Muk}}^{*}$: let its restriction to $T_{\mathcal{S}}\mathcal{M}_{\textrm{Muk}}^{*}$ be defined by the Yoneda product
\begin{equation}\label{eq:Mukai_form}
    \operatorname{Ext}_{\mathcal{O}_{Z^*}}^1 (\mathcal{S} , \mathcal{S} ) \times \operatorname{Ext}_{\mathcal{O}_{Z^*}}^1 (\mathcal{S} , \mathcal{S} ) \to \operatorname{Ext}_{\mathcal{O}_{Z^*}}^2 (\mathcal{S} , \mathcal{S} ) .
\end{equation}

\subsection{Comparison of natural symplectic structures}

We have seen in Theorem~\ref{thm:BNR} that there is an algebraic isomoprhism between $\mathcal{M}_{\textrm{Muk}}^{*}$ and a Zariski open subset of $\mathcal{M}_{\textrm{Dol}}^{JKT*}$. 
To relate the holomorphic symplectic structures of $\mathcal{M}_{\textrm{Dol}}^{JKT*}$ and $\mathcal{M}_{\textrm{Dol}}^{P*}$, we show that: 

\begin{prop}\label{prop:symplectic}
The isomorphism between $\mathcal{M}_{\textrm{Dol}}^{JKT*}$ and $\mathcal{M}_{\textrm{Muk}}^{*}$ preserves their holomorphic symplectic $2$-forms $\mathbf{\Omega}_{\textrm{Dol}}^{JKT*}$ and $\mathbf{\Omega}_{\textrm{Muk}}^{*}$, and the same holds for the isomorphism between $\mathcal{M}_{\textrm{Dol}}^{P*}$ and $\mathcal{M}_{\textrm{Muk}}^{*}$ too. 
\end{prop}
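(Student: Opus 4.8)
The plan is to prove both assertions by a single deformation-theoretic computation, exploiting that the Dolbeault form and the Mukai form are each built from a cup (Yoneda) product valued in a one-dimensional top group, and that the spectral correspondence of Theorem~\ref{thm:BNR} matches these products. First I would recall the tangent spaces. On the Higgs side, $T_{(\mathcal{E},\theta)}\mathcal{M}_{\textrm{Dol}}^{JKT*}$ is the first hypercohomology $\mathbb{H}^1(\mathcal{C}^{\bullet})$ of the deformation complex
\[
\mathcal{C}^{\bullet}\colon \operatorname{End}(\mathcal{E}) \xrightarrow{[\theta,-]} \operatorname{End}(\mathcal{E})\otimes K_X(D),
\]
restricted to deformations preserving the fixed irregular types and residues; the trace form together with $[\theta,-]$ makes $\mathcal{C}^{\bullet}$ self-dual up to $K_X[1]$ on the symplectic leaf, and $\mathbf{\Omega}_{\textrm{Dol}}^{JKT*}$ is the resulting cup product into a group which the polar constraints reduce to $H^1(X,K_X)\cong\mathbb{C}$ via the residue. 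On the Mukai side the tangent space is $\operatorname{Ext}^1_{\mathcal{O}_{Z^*}}(\mathcal{S},\mathcal{S})$ and, by the preceding lemma, $\mathbf{\Omega}_{\textrm{Muk}}^{*}$ is the Yoneda product~\eqref{eq:Mukai_form} into $\operatorname{Ext}^2_{\mathcal{O}_{Z^*}}(\mathcal{S},\mathcal{S})\cong\mathbb{C}$.

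The central step is to identify these data through the spectral correspondence. Writing $p\colon Z^*\to X$ for the ruling and $\mathcal{S}=\mathcal{S}_{(\mathcal{E},\theta)}$, I would establish a canonical quasi-isomorphism
\[
Rp_* R\mathcal{H}om_{\mathcal{O}_{Z^*}}(\mathcal{S},\mathcal{S}) \simeq \mathcal{C}^{\bullet}.
\]
This is a local computation on $\operatorname{Tot}(K_X(D))=\mathbb{F}_1\setminus\sigma_{\infty}$: resolving $\mathcal{S}$ by the two-term locally free resolution~\eqref{eq:spectral_sheaf}, applying $R\mathcal{H}om$, and pushing forward by $p$ (using the projection formula and~\eqref{eq:direct_image_relative_ample_sheaf}, exactly as in the proof of Theorem~\ref{thm:BNR}) turns the endomorphism complex into $\mathcal{C}^{\bullet}$ with differential $[\theta,-]$; since $\Sigma_{(\mathcal{E},\theta)}$ avoids $\sigma_{\infty}$, the blow-down causes no loss. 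Taking hypercohomology yields $\operatorname{Ext}^i_{\mathcal{O}_{Z^*}}(\mathcal{S},\mathcal{S})\cong\mathbb{H}^i(\mathcal{C}^{\bullet})$ for all $i$, compatibly with the derived equivalence, hence with the Yoneda and composition products. This is precisely the tangent-space identification induced by Theorem~\ref{thm:BNR}.

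With tangent spaces matched, comparing the forms reduces to two compatibilities. The Yoneda product on $\operatorname{Ext}^{\bullet}_{\mathcal{O}_{Z^*}}(\mathcal{S},\mathcal{S})$ is carried by $Rp_*$ to the trace-cup product defining $\mathbf{\Omega}_{\textrm{Dol}}^{JKT*}$, which is formal once the quasi-isomorphism above is in hand. It then remains to see that the two trivializations of the top group agree. Here I would use that the lemma's isomorphism $\operatorname{Ext}^2_{\mathcal{O}_{Z^*}}(\mathcal{S},\mathcal{S})\cong\mathbb{C}$ is Serre duality on $\Sigma$ through $N_{\Sigma\subset Z^*}\cong K_{\Sigma}$, the latter being induced by the Liouville form of $Z^*\setminus F_{\infty}^{*}$; and that this Liouville form restricts on the fibers of $p$ to the tautological $\operatorname{d}\!\zeta\wedge\operatorname{d}\!z$, which is exactly the datum realizing $\mathbb{H}^2(\mathcal{C}^{\bullet})\cong H^1(X,K_X)\cong\mathbb{C}$ via the residue. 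Tracking this through the compatibility of $Rp_*$ with the relative and absolute trace maps (Grothendieck--Serre duality) shows the two units coincide, so the forms agree on the nose; since both are algebraic and the smooth-spectral-curve locus is dense, agreement there suffices. This follows the approach of~\cite[Remark~8.17]{DonMark} and~\cite[Lemma~5.3]{Sz_Plancherel}, see also~\cite[Theorem~10.2]{ASz}.

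For the rank $2$ space $\mathcal{M}_{\textrm{Dol}}^{P*}$ the argument is verbatim the same, with $3$ replaced by $2$ and the rank-$2$ spectral correspondence of~\cite{ISSz1},~\cite{ISSz2} in place of Theorem~\ref{thm:BNR}; crucially the target surface and its anticanonical fiber are the \emph{same} $(Z^*,F_{\infty}^{*})$, so $\mathcal{M}_{\textrm{Muk}}^{*}$ and its form are literally the same object. The main obstacle I anticipate is the last compatibility: showing not merely that the two pairings are proportional but that the two canonical trivializations of the top group coincide, which requires matching the Serre-duality residue on $\Sigma$ with the Liouville/residue datum on the base and controlling the contribution of the irregular, possibly twisted, boundary divisor $F_{\infty}^{*}$ --- although this is considerably eased by the fact that all spectral curves are disjoint from $\sigma_{\infty}$.
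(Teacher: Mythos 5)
Your overall architecture --- identify $T_{(\mathcal{E},\theta)}\mathcal{M}_{\textrm{Dol}}^{JKT*}$ with $\operatorname{Ext}^1_{\mathcal{O}_{Z^*}}(\mathcal{S},\mathcal{S})$ by pushing an endomorphism complex of $\mathcal{S}$ down the ruling, then match the Yoneda product with a trace--cup pairing --- is the same as the paper's, but your central step has a genuine gap as justified. You claim a quasi-isomorphism $Rp_*R\mathcal{H}om_{\mathcal{O}_{Z^*}}(\mathcal{S},\mathcal{S})\simeq\mathcal{C}^{\bullet}$ and propose to prove it by a ``local computation on $\operatorname{Tot}(K_X(D))=\mathbb{F}_1\setminus\sigma_{\infty}$'', resolving $\mathcal{S}$ by \eqref{eq:spectral_sheaf} and pushing forward exactly as in the proof of Theorem~\ref{thm:BNR}. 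That computation is blind to the blow-ups defining $Z^*$, so what it actually computes is $Rp_*R\mathcal{H}om_{\mathcal{O}_{\mathbb{F}_1}}(\mathcal{S},\mathcal{S})$, i.e.\ the full complex $\mathcal{E}nd(\mathcal{E})\xrightarrow{\operatorname{ad}_{\theta}}\mathcal{E}nd(\mathcal{E})\otimes K_X(D)$ governing the \emph{Poisson} moduli space with varying polar data (the complex \eqref{eq:Dol_Poisson_complex} of the paper). These are genuinely different objects: for a smooth spectral curve $\Sigma$ (genus $1$, with $\Sigma^2=9$ in $\mathbb{F}_1$) one has $\dim\operatorname{Ext}^1_{\mathcal{O}_{\mathbb{F}_1}}(\mathcal{S},\mathcal{S})=h^1(\mathcal{O}_{\Sigma})+h^0(N_{\Sigma/\mathbb{F}_1})=1+9=10$, whereas $\dim\operatorname{Ext}^1_{\mathcal{O}_{Z^*}}(\mathcal{S},\mathcal{S})=1+1=2$, because the proper transform satisfies $\widetilde{\Sigma}^2=0$. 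The entire content of your phrase ``restricted to deformations preserving the fixed irregular types and residues'' is exactly this discrepancy --- the $\mathcal{E}xt^1$ sheaf over $Z^*$ is twisted down by the divisor of points lying over the base locus of the pencil --- and your proposal states the restriction but never implements it. The paper spends the core of its proof on precisely this: the lemma identifying $\mathbb{H}^1(R^{\bullet}p_*\mathcal{C}_{\bullet})$ with $T_{(\mathcal{E},\theta)}\mathcal{M}_{\textrm{Dol}}^{JKT*}$ by comparing the $\mathcal{O}_{Z^*}$- and $\mathcal{O}_{\mathbb{F}_1}$-Hom complexes, and Lemma~\ref{lem:1_cocycle}, which shows the Yoneda $1$-cocycles take values in $K_{\Sigma}(-\Delta)$ rather than $K_{\Sigma}(p^{-1}(D)-\Delta)$; this is what makes the trace pairing land in $K_{\mathbb{C}P^1}$ instead of $K_{\mathbb{C}P^1}(D)$, so that Serre duality against $1\in H^0(\mathbb{C}P^1,\mathcal{O}_{\mathbb{C}P^1})$ yields a well-defined pairing on the $2$-dimensional leaf.

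A second, smaller misjudgment: the assertion that carrying the Yoneda product to the trace--cup product is ``formal once the quasi-isomorphism above is in hand'' hides real work, since $Rp_*$ is not monoidal. The paper must choose an acyclic cover over which $p|_{\Sigma}$ is unramified, lift cocycles using projectivity of $\mathcal{S}$ over $\mathcal{O}_{\Sigma}$, and prove the sheet-sum identity $\operatorname{tr}(G'_j F_{ij})=\sum_{m=1}^{3}\bigl(f_{ij}\circ\widetilde{g'}_j\circ\phi\bigr)|_{p^{-1}(U_i\cap U_j)_m}$ before the comparison diagram commutes. Your instinct that duality for the finite map $p|_{\Sigma}$ is the right tool is sound and could repackage this computation, but conversely the issue you flag as the main obstacle --- matching the two trivializations of the top Ext group --- is actually the easy part: once the diagram commutes, it is absorbed by the definition of the transpose $(p^*)^t$, since pairing against $1$ upstairs and downstairs correspond under pull-back. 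So the proposal misplaces the difficulty: the hard points are the $Z^*$-versus-$\mathbb{F}_1$ distinction and the trace/sheet-sum identity, and neither is established by the argument you outline.
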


\begin{proof}
Our argument follows the lines of~\cite[Proposition~5.1]{Sz_Plancherel}. 
Clearly, it is sufficient to prove the statement for $\mathcal{M}_{\textrm{Dol}}^{JKT*}$, because for $\mathcal{M}_{\textrm{Dol}}^{P*}$ it follows the same argument. 
Moreover, it is sufficient to prove it on the Zariski open subset of $\mathcal{M}_{\textrm{Dol}}^{JKT*}$ defined by the property that $\Sigma_{(\mathcal{E},\theta)}$ is smooth. 

We will use the terminology and basic constructs of the theory of derived categories. 
For a smooth projective variety, let $D^b (Coh_Y )$ denote the bounded derived category of coherent sheaves of $\mathcal{O}_Y$-modules. 
Recall that the spectral sheaf $\mathcal{S}_{(\mathcal{E},\theta) }$ is defined by the projective resolution~\eqref{eq:spectral_sheaf}. 
In different terms, we have 
\[
    [p^* (\mathcal{E}  \otimes K_{\mathbb{C}P^1}(D)^{-1}) \xrightarrow{\zeta I_{\mathcal{E}}-\xi p^*\theta}  p^* \mathcal{E} \otimes \mathcal{O}_{\mathbb{F}_1|\mathbb{C}P^1}(1)] = [\mathcal{S}_{(\mathcal{E},\theta) }] \in D^b (Coh_{Z^*} ), 
\]
where the nontrivial sheaves of the complex on the left-hand side are placed in degrees $-1,0$. 
Given $(\mathcal{E},\theta)$, let $\mathcal{C}_{\bullet} = \mathcal{C}_{(\mathcal{E},\theta) }$ denote the image of this complex by the contravariant functor $\mathcal{H}om_{\mathcal{O}_{Z^*}}(\cdot , \mathcal{S}_{(\mathcal{E},\theta) } )$, i.e. define 
\begin{align}
    \mathcal{C}_0 & = \mathcal{H}om_{\mathcal{O}_{Z^*}} (p^* \mathcal{E} \otimes \mathcal{O}_{\mathbb{F}_1|\mathbb{C}P^1}(1), \mathcal{S}_{(\mathcal{E},\theta) })\label{eq:C0}\\
    \mathcal{C}_1 & = \mathcal{H}om_{\mathcal{O}_{Z^*}} (p^* (\mathcal{E}  \otimes K_{\mathbb{C}P^1}(D)^{-1}), \mathcal{S}_{(\mathcal{E},\theta) }) \notag \\
    & \cong  \mathcal{H}om_{\mathcal{O}_{Z^*}} (p^* \mathcal{E}, \mathcal{S}_{(\mathcal{E},\theta) }  \otimes p^* K_{\mathbb{C}P^1}(D)), \label{eq:C1}
\end{align}
with morphism $\mathcal{C}_0 \to \mathcal{C}_1$ given by 
\begin{equation}\label{eq:morphism}
        f \mapsto f \circ (\zeta I_{\mathcal{E}}-\xi p^*\theta).
\end{equation}
Then, we have 
\[
    \operatorname{Ext}_{\mathcal{O}_{Z^*}}^{\bullet} (\mathcal{S}_{(\mathcal{E},\theta) } , \mathcal{S}_{(\mathcal{E},\theta) } ) \cong \mathbb{H}^{\bullet} (\mathcal{C}_{(\mathcal{E},\theta)}). 
\]
\begin{lemma}
The first hypercohomology of $R^{\bullet} p_* \mathcal{C}_{\bullet}$ is canonically isomorphic to $T_{(\mathcal{E}, \theta)} \mathcal{M}_{\textrm{Dol}}^{JKT*}$. 
\end{lemma}
\begin{proof}
This follows immediately from Theorem~\ref{thm:BNR} by passing to tangent spaces. 
Let us, nevertheless, give some details. 

Since the complex is supported on $\Sigma_{(\mathcal{E},\theta)}$, and this scheme is finite over $X$, we have $R^{\bullet} p_* \mathcal{C}_{\bullet} = R^0 p_* \mathcal{C}_{\bullet}$. 
From the defining ring monomorphism $\mathcal{O}_{\mathbb{F}_1} \to \mathcal{O}_{Z^*}$ we get a natural morphism 
\[
    \mathcal{H}om_{\mathcal{O}_{\mathbb{F}_1}} (p^* \mathcal{E} \otimes \mathcal{O}_{\mathbb{F}_1|\mathbb{C}P^1}(1), \mathcal{S}_{(\mathcal{E},\theta) }) \to \mathcal{H}om_{\mathcal{O}_{Z^*}} (p^* \mathcal{E} \otimes \mathcal{O}_{\mathbb{F}_1|\mathbb{C}P^1}(1), \mathcal{S}_{(\mathcal{E},\theta) }). 
\]
It gives rise to a natural homomorphism from $\mathbb{H}^{\bullet} (\mathcal{C}_{(\mathcal{E},\theta)})$ to $\mathbb{H}^{\bullet}$ of the complex 
\begin{equation}\label{eq:def_complex_Poisson_Mukai}
        [ \mathcal{H}om_{\mathcal{O}_{\mathbb{F}_1}} (p^* \mathcal{E} \otimes \mathcal{O}_{\mathbb{F}_1|\mathbb{C}P^1}(1), \mathcal{S}_{(\mathcal{E},\theta) }) \to  \mathcal{H}om_{\mathcal{O}_{\mathcal{O}_{\mathbb{F}_1}}} (p^* \mathcal{E}, \mathcal{S}_{(\mathcal{E},\theta) }  \otimes p^* K_{\mathbb{C}P^1}(D))] 
\end{equation}
(sheaves placed in degrees $0,1$). 
The latter hypercohomology governs the deformation theory of the Poisson moduli spaces of sheaves of $\mathbb{F}_1$, that correspond to the Poisson moduli spaces of meromorphic Higgs bundles with non-fixed parameter values $a_i, b_i, c_i\in \mathbb{C}$. 
It is easy to see that the direct image under $p$ of~\eqref{eq:def_complex_Poisson_Mukai} is quasi-isomorphic to the complex $\mathcal{D}(\operatorname{ad}_{\theta})_{\bullet} \in D^b (Coh_{\mathbb{C}P^1})$ defined by 
\begin{equation}\label{eq:Dol_Poisson_complex}
        \mathcal{E}nd_{\mathcal{O}_{\mathbb{C}P^1}} (\mathcal{E} ) \to \mathcal{E}nd_{\mathcal{O}_{\mathbb{C}P^1}} (\mathcal{E} ) \otimes K_{\mathbb{C}P^1}(D) 
\end{equation}
with nontrivial sheaves in degrees $0,1$. 
Here, the only nontrivial boundary map is the direct image of~\eqref{eq:morphism}. 
Now, given that  
\[
p_* (\zeta\colon  \mathcal{S}_{(\mathcal{E},\theta)} \to \mathcal{S}_{(\mathcal{E},\theta)} \otimes p^* K_{\mathbb{C}P^1}(D)) = (\theta\colon \mathcal{E} \to \mathcal{E} \otimes K_{\mathbb{C}P^1}(D)), 
\]
we get that the boundary map of~\eqref{eq:Dol_Poisson_complex} is 
\[
    F \mapsto \theta \circ F - F \circ \theta.  
\]
Therefore, there exists a natural morphism from  the direct image under $p$ of~\eqref{eq:def_complex_Poisson_Mukai} to $R^{\bullet} p_* \mathcal{C}_{\bullet}$. 
The induced morphism in first hypercohomology of this natural map identifies to the tangent map of the inclusion of $T_{(\mathcal{E}, \theta)} \mathcal{M}_{\textrm{Dol}}^{JKT*}$ in the tangent space of the Poisson moduli spaces of meromorphic Higgs bundles. 
\end{proof}
From the above properties and proper base change, we get the isomorphisms 
\begin{align*}
    T_{\mathcal{S}_{(\mathcal{E},\theta) }} \mathcal{M}_{\textrm{Muk}}^{*} & = \operatorname{Ext}_{\mathcal{O}_{Z^*}}^1 (\mathcal{S}_{(\mathcal{E},\theta) } , \mathcal{S}_{(\mathcal{E},\theta) } ) \\
    & = \mathbb{H}^1 (\mathcal{C}_{(\mathcal{E},\theta)}) \\
    & \stackrel{R^{\bullet} p_*}{\cong} \mathbb{H}^1 (R^{\bullet} p_*\mathcal{C}_{(\mathcal{E},\theta)}) \\
    & = T_{(\mathcal{E}, \theta)} \mathcal{M}_{\textrm{Dol}}^{JKT*}.
\end{align*}

We proceed to describing in detail the natural alternating $2$-forms on these tangent spaces, based on~\cite[Section~10]{HL}. 
We start by $\mathcal{M}_{\textrm{Muk}}^{*}$, where the $2$-form $\mathbf{\Omega}_{\textrm{Muk}}^{*}$ is defined by~\eqref{eq:Mukai_form}. 
Let $\mathbb{C}P^1 = \cup_{i\in I} U_i$ be an acyclic analytic open covering indexed by a finite ordered set $I$. 
It is possible to choose the covering so that $\Sigma_{(\mathcal{E},\theta)}$ is unramified over $U_i \cap U_j$ for all $i\neq j \in I$. 
We have $Z^* = \cup_i p^{-1}(U_i)$. 
We will use \v{C}ech cohomology with respect to these covers.
As customary, we will denote \v{C}ech $i$-cochains by $\mathcal{C}^i$; this is not to be confused with the complex defined in~\eqref{eq:C0}-\eqref{eq:C1}. 
An element of $\mathbb{H}^1 (\mathcal{C}_{(\mathcal{E},\theta)})$ is represented by 
\[
    ( (f_{ij})_{ij}, (g_i )_i ) \in \mathcal{C}^1 ( \mathcal{C}_0 ) \oplus \mathcal{C}^0 ( \mathcal{C}_1 ).
\]
Namely, 
\[
    f_{ij} \in \mathcal{H}om_{\mathcal{O}_{p^{-1}(U_i\cap U_j)}} (p^* \mathcal{E} \otimes \mathcal{O}_{\mathbb{F}_1|\mathbb{C}P^1}(1), \mathcal{S}_{(\mathcal{E},\theta) })
\]
and 
\[
    g_i \in \mathcal{H}om_{\mathcal{O}_{p^{-1}(U_i)}} (p^* (\mathcal{E} \otimes  K_{\mathbb{C}P^1}(D)^{-1}), \mathcal{S}_{(\mathcal{E},\theta) }  ), 
\]
and they fulfill the relation 
\[
    f_{ij} \circ (\zeta I_{\mathcal{E}}-\xi p^*\theta) = (g_i)|_{p^{-1}(U_i\cap U_j)} - (g_j)|_{p^{-1}(U_i\cap U_j)}.
\]
Let now 
\[
    ( (f'_{ij})_{ij}, (g'_i )_i )
\]
represent another element of $\mathbb{H}^1 (\mathcal{C}_{(\mathcal{E},\theta)})$. 
By the smoothness assumption on $\Sigma_{(\mathcal{E},\theta)}$, $\mathcal{S}_{(\mathcal{E},\theta) }$ is a projective $\mathcal{O}_{\Sigma_{(\mathcal{E},\theta)}}$-module. 
Therefore, for each $i$ the morphism of sheaves of $\mathcal{O}_{\Sigma_{(\mathcal{E},\theta)}}$-modules 
\[
    g_i\otimes_{\mathcal{O}_{Z^*}} \operatorname{I}_{\mathcal{O}_{\Sigma_{(\mathcal{E},\theta)}}} \colon p^* ( \mathcal{E} \otimes K_{\mathbb{C}P^1}(D)^{-1}) (p^{-1} (U_i ) \cap \Sigma_{(\mathcal{E},\theta)}) \to \mathcal{S}_{(\mathcal{E},\theta) } (p^{-1} (U_i )\cap \Sigma_{(\mathcal{E},\theta)}), 
\]
coming from tensor product of~\eqref{eq:spectral_sheaf} with  $\mathcal{O}_{\Sigma_{(\mathcal{E},\theta)}}$ over $\mathcal{O}_{Z^*}$, admits a lift 
\[
    \tilde{g}_i \colon p^* ( \mathcal{E} \otimes K_{\mathbb{C}P^1}(D)^{-1}) (p^{-1} (U_i )\cap \Sigma_{(\mathcal{E},\theta)}) \to p^* \mathcal{E} \otimes \mathcal{O}_{\mathbb{F}_1|\mathbb{C}P^1}(1)(p^{-1} (U_i )\cap \Sigma_{(\mathcal{E},\theta)}).
\]
Then, the Yoneda product of the classes $( (f_{ij})_{ij}, (g_i )_i ), ( (f'_{ij})_{ij}, (g'_i )_i )$ is the class in $\mathbb{H}^2 (\mathcal{C}_{(\mathcal{E},\theta)})$ represented by 
\[
     f_{ij} \circ (\widetilde{g'}_j)|_{p^{-1}(U_i\cap U_j)} - f'_{ij} \circ (\tilde{g}_i)|_{p^{-1}(U_i\cap U_j)} \in \mathcal{C}^1 ( \mathcal{C}_1),
\]
(see~\cite[Section~10.1.1]{HL}). 
Let $\Delta\subset \Sigma_{(\mathcal{E},\theta)}$ denote the ramification divisor of the morphism $p|_{\Sigma_{(\mathcal{E},\theta)}}$. 
By Riemann--Hurwitz, we have 
\[
    \operatorname{length}(\Delta ) = 3 \chi (\mathbb{C}P^1) - \chi ( \Sigma_{(\mathcal{E},\theta)} ) =  6 . 
\]
Then the tensor product of~\eqref{eq:spectral_sheaf} with $\mathcal{O}_{\Sigma_{(\mathcal{E},\theta)}}$ reads as 
\begin{align*}
    0 & \to  \mathcal{S}_{(\mathcal{E},\theta) } \otimes K_{\Sigma_{(\mathcal{E},\theta)}}(p^{-1}(D) - \Delta )^{-1} \to  \\
     \xrightarrow{\phi} p^* ( \mathcal{E} \otimes K_{\mathbb{C}P^1}(D)^{-1}) \otimes_{\mathcal{O}_{Z^*}} \mathcal{O}_{\Sigma_{(\mathcal{E},\theta)}} \to p^* \mathcal{E} \otimes_{\mathcal{O}_{Z^*}} \mathcal{O}_{\Sigma_{(\mathcal{E},\theta)}} & \to \mathcal{S}_{(\mathcal{E},\theta) } \to 0
\end{align*}
Applying the contravariant functor $\mathcal{H}om_{\mathcal{O}_{\Sigma_{(\mathcal{E},\theta)}}}(\cdot , \mathcal{S}_{(\mathcal{E},\theta) } )$ to the morphism $\phi$ we get a $1$-cocycle 
\begin{equation}\label{eq:1_cocycle}
        f_{ij} \circ (\widetilde{g'}_j) \circ \phi |_{p^{-1}(U_i\cap U_j)} - f'_{ij} \circ (\tilde{g}_i) \circ \phi |_{p^{-1}(U_i\cap U_j)} 
\end{equation}
with values in 
\[
    \mathcal{H}om_{\mathcal{O}_{\Sigma_{(\mathcal{E},\theta)}}}( \mathcal{S}_{(\mathcal{E},\theta) } \otimes K_{\Sigma_{(\mathcal{E},\theta)}}(p^{-1}(D) - \Delta )^{-1} , \mathcal{S}_{(\mathcal{E},\theta) } ) \cong \mathcal{E}nd_{\mathcal{O}_{\Sigma_{(\mathcal{E},\theta)}}}( \mathcal{S}_{(\mathcal{E},\theta) }) \otimes K_{\Sigma_{(\mathcal{E},\theta)}}(p^{-1}(D) - \Delta ).
\]
Since $\mathcal{S}_{(\mathcal{E},\theta) }$ is locally free as an $\mathcal{O}_{\Sigma_{(\mathcal{E},\theta)}}$-module, this amounts to a $1$-cocycle with values in $K_{\Sigma_{(\mathcal{E},\theta)}}(p^{-1}(D) - \Delta )$. 

\begin{lemma}\label{lem:1_cocycle}
The $1$-cocycle~\eqref{eq:1_cocycle} takes its values in $K_{\Sigma_{(\mathcal{E},\theta)}} ( - \Delta  )$. 
\end{lemma}
\begin{proof}
The constructions of Section~\ref{sec:DolVI}--~\ref{sec:DolI} are resolutions of the indeterminacy loci of the pencils in $\mathbb{F}_1$. 
In particular, for each $(\mathcal{E},\theta)$ the spectral curve $\Sigma_{(\mathcal{E},\theta)}$ is disjoint from (the total transform in $Z^*$ of) $p^{-1}(D)$. 
Said differently, there exists a canonical isomorphism 
\[
    \mathcal{O}_{\Sigma_{(\mathcal{E},\theta)}} \otimes \mathcal{O}_{Z^*}(p^{-1}(D)) \cong \mathcal{O}_{\Sigma_{(\mathcal{E},\theta)}}.
\]
\end{proof}
By Serre duality, we have 
\[
    H^1 ( \Sigma_{(\mathcal{E},\theta)}, K_{\Sigma_{(\mathcal{E},\theta)}}( - \Delta  )) \cong H^0 ( \Sigma_{(\mathcal{E},\theta)}, \mathcal{O}_{\Sigma_{(\mathcal{E},\theta)}}( \Delta  ))^{\vee}. 
\]
There is a canonical embedding 
\[
    H^0 ( \Sigma_{(\mathcal{E},\theta)}, \mathcal{O}_{\Sigma_{(\mathcal{E},\theta)}}) \hookrightarrow H^0 ( \Sigma_{(\mathcal{E},\theta)}, \mathcal{O}_{\Sigma_{(\mathcal{E},\theta)}}( \Delta  )), 
\]
so an element of $H^1 ( \Sigma_{(\mathcal{E},\theta)}, K_{\Sigma_{(\mathcal{E},\theta)}}( - \Delta  ))$ may be evaluated on its image. 
Therefore, this procedure provides the desired pairing 
\begin{equation}\label{eq:Mukai_pairing}
      \mathbf{\Omega}_{\textrm{Muk}}^{*}( [ (f_{ij})_{ij}, (g_i )_i ], [ (f'_{ij})_{ij}, (g'_i )_i ]) = \langle [ (f_{ij} \circ (\widetilde{g'}_j) \circ \phi  - f'_{ij} \circ (\tilde{g}_i) \circ \phi)_{ij} ] , 1  \rangle \in \mathbb{C}. 
\end{equation}

We now describe the symplectic form $\mathbf{\Omega}_{\textrm{Dol}}^{JKT*}$. 
For reference, see~\cite[Section~4]{BR}. 
Let $(R^{\bullet} p_* \mathcal{C} \otimes R^{\bullet} p_* \mathcal{C})_{\bullet} \in D^b (Coh_{\mathbb{C}P^1})$ be defined by 
\begin{align*}
    (R^{\bullet} p_* \mathcal{C} \otimes R^{\bullet} p_* \mathcal{C})_0 & = R^{\bullet} p_* \mathcal{C}_0 \otimes R^{\bullet} p_* \mathcal{C}_0 \\
    (R^{\bullet} p_* \mathcal{C} \otimes R^{\bullet} p_* \mathcal{C})_1 & = (R^{\bullet} p_* \mathcal{C}_1 \otimes R^{\bullet} p_* \mathcal{C}_0 ) \oplus (R^{\bullet} p_* \mathcal{C}_0 \otimes R^{\bullet} p_* \mathcal{C}_1 ) \\
    (R^{\bullet} p_* \mathcal{C} \otimes R^{\bullet} p_* \mathcal{C})_2 & = R^{\bullet} p_* \mathcal{C}_1 \otimes R^{\bullet} p_* \mathcal{C}_1, 
\end{align*}
with differentials 
\begin{align*}
    (\operatorname{ad}_{\theta}\otimes \mbox{I})  \oplus  (\mbox{I}\otimes \operatorname{ad}_{\theta}) & \colon (R^{\bullet} p_* \mathcal{C} \otimes R^{\bullet} p_* \mathcal{C})_0 \to (R^{\bullet} p_* \mathcal{C} \otimes R^{\bullet} p_* \mathcal{C})_1 \\
    (\mbox{I}\otimes \operatorname{ad}_{\theta}) - (\operatorname{ad}_{\theta}\otimes \mbox{I}) & \colon (R^{\bullet} p_* \mathcal{C} \otimes R^{\bullet} p_* \mathcal{C})_1 \to (R^{\bullet} p_* \mathcal{C} \otimes R^{\bullet} p_* \mathcal{C})_2.
\end{align*}
There exists a natural cup product 
\[
    \cup\colon \mathbb{H}^1 ( R^{\bullet} p_* \mathcal{C}_{\bullet} ) \times \mathbb{H}^1 ( R^{\bullet} p_* \mathcal{C}_{\bullet} ) \to \mathbb{H}^2 ( (R^{\bullet} p_* \mathcal{C} \otimes R^{\bullet} p_* \mathcal{C})_{\bullet} ). 
\]
A class in $\mathbb{H}^1 ( R^{\bullet} p_* \mathcal{C}_{\bullet} )$ is represented by 
\[
   ( (F_{ij})_{ij}, (G_i )_i ) \in \mathcal{C}^1 ( p_* \mathcal{C}_0 ) \oplus \mathcal{C}^0 ( p_* \mathcal{C}_1 ), 
\]
i.e. 
\[
    F_{ij} \in \mathcal{E}nd_{\mathcal{O}_{U_i\cap U_j}} ( \mathcal{E})
\]
and 
\[
    G_i \in \mathcal{H}om_{\mathcal{O}_{U_i}} (\mathcal{E}, \mathcal{E} \otimes  K_{\mathbb{C}P^1}(D)), 
\]
and they fulfill the relation 
\[
    \operatorname{ad}_{\theta} (F_{ij}) = (G_i)|_{U_i\cap U_j} - (G_j)|_{U_i\cap U_j}.
\]
Let us consider the morphism 
\begin{equation}\label{eq:trace_morphism}
        (R^{\bullet} p_* \mathcal{C} \otimes R^{\bullet} p_* \mathcal{C})_{\bullet} \to K_{\mathbb{C}P^1} (D) [-1]
\end{equation}
in $D^b (Coh_{\mathbb{C}P^1})$ defined by the sheaf morphism 
\begin{align*}
    (R^{\bullet} p_* \mathcal{C} \otimes R^{\bullet} p_* \mathcal{C})_1 & \to K_{\mathbb{C}P^1} (D) \\
    ((G \otimes F) , (F' \otimes G')) & \mapsto \operatorname{tr} (G F) - \operatorname{tr} (G' F'), 
\end{align*}
and all other morhpisms $0$. 
\begin{lemma}
    The morphism~\eqref{eq:trace_morphism} maps $(R^{\bullet} p_* \mathcal{C} \otimes R^{\bullet} p_* \mathcal{C})_{\bullet}$ into $K_{\mathbb{C}P^1}[-1]$. 
\end{lemma}
\begin{proof}
Immediate by taking the direct image of Lemma~\ref{lem:1_cocycle}.
\end{proof}
We deduce a natural morphism on hypercohomology 
\[
    \mathbb{H}^2 ( (R^{\bullet} p_* \mathcal{C} \otimes R^{\bullet} p_* \mathcal{C})_{\bullet} ) \to \mathbb{H}^2 ( K_{\mathbb{C}P^1} [-1] ) = H^1 (\mathbb{C}P^1, K_{\mathbb{C}P^1} ). 
\]
The symplectic form $\mathbf{\Omega}_{\textrm{Dol}}^{JKT*}$ is then defined by the composition 
\begin{align*}
     \mathbb{H}^1 ( R^{\bullet} p_* \mathcal{C}_{\bullet} ) \times \mathbb{H}^1 ( R^{\bullet} p_* \mathcal{C}_{\bullet} ) & \xrightarrow{\cup} \mathbb{H}^2 ((R^{\bullet} p_* \mathcal{C} \otimes R^{\bullet} p_* \mathcal{C})_{\bullet} ) \\ 
     & \to H^1 (\mathbb{C}P^1, K_{\mathbb{C}P^1}) \\
     & \xrightarrow{SD} H^0 (\mathbb{C}P^1, \mathcal{O}_{\mathbb{C}P^1} )^{\vee} \cong \mathbb{C}, 
\end{align*}
where $SD$ stands for Serre duality. 
Let now 
\[
    ((F_{ij})_{ij}, (G_i )_i), \quad ((F'_{ij})_{ij}, (G'_i )_i)
\]
be arbitrary \v{C}ech cocycles representing elements in $\mathbb{H}^1 ( R^{\bullet} p_* \mathcal{C}_{\bullet} )$. 
Their cup product is represented by the $1$-cocycle 
\[
   ((F_{ij} \otimes G'_j |_{U_i \cap U_j}, - G_i \otimes F_{ij}' |_{U_i \cap U_j})_{ij}, (G_i \otimes G'_i - G'_i \otimes G_i)_i)
\]
where 
\begin{align*}
 (F_{ij} \otimes G'_j |_{U_i \cap U_j}, - G_i \otimes F_{ij}'  |_{U_i \cap U_j})_{ij} & \in \mathcal{C}^1 ((R^{\bullet} p_* \mathcal{C} \otimes R^{\bullet} p_* \mathcal{C})_1) \\
  (G_i \otimes G'_i - G'_i \otimes G_i)_i & \in \mathcal{C}^0 ((R^{\bullet} p_* \mathcal{C} \otimes R^{\bullet} p_* \mathcal{C})_2). 
\end{align*}
Applying~\eqref{eq:trace_morphism}, we get the $K_{\mathbb{C}P^1}$-valued $1$-cocycle 
\[
    \operatorname{tr} ( G'_j |_{U_i \cap U_j} F_{ij} - G_i |_{U_i \cap U_j} F_{ij}'). 
\]
Let us now take 
\[
    F_{ij} = p_* f_{ij}, \quad G_i = p_* g_i, \quad F'_{ij} = p_* f'_{ij}, \quad G'_i = p_* g'_i. 
\]
By~\eqref{eq:Mukai_pairing}, we then need to prove 
\[
   [ \operatorname{tr} ( G'_j |_{U_i \cap U_j} F_{ij} - G_i |_{U_i \cap U_j} F_{ij}')_{ij} ] = (p^*)^t [ (f_{ij} \circ (\widetilde{g'}_j) \circ \phi  - f'_{ij} \circ (\tilde{g}_i) \circ \phi)_{ij}  ] \in H^1 ( \mathbb{C}P^1, K_{\mathbb{C}P^1} ).  
\]
Here, 
\[
    p^* \colon H^0 (\mathbb{C}P^1, \mathcal{O}_{\mathbb{C}P^1} ) \to H^0 ( \Sigma_{(\mathcal{E},\theta)}, \mathcal{O}_{\Sigma_{(\mathcal{E},\theta)}} )
\]
is the pull-back map and 
\[
    (p^*)^t\colon H^1 ( \Sigma_{(\mathcal{E},\theta)}, K_{\Sigma_{(\mathcal{E},\theta)}} ) \to H^1 ( \mathbb{C}P^1, K_{\mathbb{C}P^1} )
\]
stands for its transpose via Serre duality. Let 
\[
    p^{-1} (U_i \cap U_j)_m \qquad \mbox{for} \; 1 \leq m \leq 3
\]
be some fixed ordering of the sheets of $\Sigma_{(\mathcal{E},\theta)}$ lying over $U_i \cap U_j$ (recall our assumption $\Delta \cap p^{-1} (U_i \cap U_j ) = \varnothing$ for any $i\neq j$). 
The morphism $\phi$ appearing in~\eqref{eq:1_cocycle} induces isomorphisms 
\[
    \mathcal{E}|_{U_i \cap U_j} \cong \bigoplus_{m=1}^3 \mathcal{S}_{(\mathcal{E},\theta) }|_{p^{-1} (U_i \cap U_j)_m}. 
\]
We have 
\[
    \operatorname{tr} ( G'_j |_{U_i \cap U_j} F_{ij} ) = \operatorname{tr} (F_{ij} G'_j |_{U_i \cap U_j}) =  \sum_{m=1}^3 \left( f_{ij} \circ (\widetilde{g'}_j) \circ \phi \right)|_{p^{-1}(U_i\cap U_j)_m}, 
\]
and similarly for $\operatorname{tr} ( G_i |_{U_i \cap U_j} F'_{ij} )$. 
Now, since 
\[
    p^{-1} (U_i \cap U_j) = \bigcup_{m = 1}^3 p^{-1} (U_i \cap U_j)_m 
\]
(disjoint union), the set of double intersections of the open subsets $p^{-1}(U_i)$ is just 
\[
    \{ p^{-1} (U_i \cap U_j)_m \}_{i,j\in I, 1 \leq m \leq 3}. 
\]
We get 
\[
    (\operatorname{tr} ( G'_j |_{U_i \cap U_j} F_{ij} - G_i |_{U_i \cap U_j} F_{ij}'))_{ij} = \left( \sum_{m = 1}^3  \left( f_{ij} \circ (\widetilde{g'}_j) \circ \phi - f'_{ij} \circ (\widetilde{g}_i) \circ \phi \right) |_{p^{-1}(U_i\cap U_j)_m} \right)_{i,j\in I}.  
\]
This shows that 
\[
    \xymatrix{\operatorname{Ext}_{\mathcal{O}_{Z^*}}^1 (\mathcal{S} , \mathcal{S} ) \times \operatorname{Ext}_{\mathcal{O}_{Z^*}}^1 (\mathcal{S} , \mathcal{S} ) \ar[r]^{\hspace{1.3cm}\cup} \ar[d]_{p_* \times p_*} &  \operatorname{Ext}_{\mathcal{O}_{Z^*}}^2 (\mathcal{S} , \mathcal{S} ) \ar[r] &  H^1 ( \Sigma_{(\mathcal{E},\theta)}, K_{\Sigma_{(\mathcal{E},\theta)}}) \ar[d]^{(p^*)^t}\\
    \mathbb{H}^1 ( R^{\bullet} p_* \mathcal{C}_{\bullet} ) \times \mathbb{H}^1 ( R^{\bullet} p_* \mathcal{C}_{\bullet} ) \ar[r]^{\cup} &  \mathbb{H}^2 ((R^{\bullet} p_* \mathcal{C} \otimes R^{\bullet} p_* \mathcal{C})_{\bullet} ) \ar[r] & H^1 (\mathbb{C}P^1, K_{\mathbb{C}P^1}) }    
\]
commutes. 
It follows that 
\[
    \mathbf{\Omega}_{\textrm{Dol}}^{JKT*} ([ (F_{ij})_{ij}, (G_i )_i ], [ (F'_{ij})_{ij}, (G'_i )_i ]) = \mathbf{\Omega}_{\textrm{Muk}}^{*}( [ (f_{ij})_{ij}, (g_i )_i ], [ (f'_{ij})_{ij}, (g'_i )_i ]). 
\]
This finishes the proof.  
\end{proof}

The proof of Theorem~\ref{thm:Dolisometry} now follows from the proposition, because on Zariski open subsets of $\mathcal{M}_{\textrm{Dol}}^{JKT*}$ and $\mathcal{M}_{\textrm{Dol}}^{P*}$ the holomorphic symplectic $2$-forms get mapped to each other by $\mathcal{N}$.

\bigskip
\textbf{Funding}
\bigskip
\\ The project supported by the Doctoral Excellence Fellowship Programme (DCEP) is funded by the National Research Development and Innovation Fund of the Ministry of Culture and Innovation and the Budapest University of Technology and Economics, under a grant agreement with the National Research, Development and Innovation Office. During the preparation of this manuscript, the authors were supported by the grant K146401 of the National Research, Development and Innovation Office.
The second author was also supported by the grant KKP144148 of the National Research, Development and Innovation Office.

\end{document}